%%%AOP PACKAGES
\documentclass[aop,preprint,noinfoline]{imsart}
\RequirePackage[OT1]{fontenc}
\RequirePackage{amsthm,amsmath}
\RequirePackage[numbers]{natbib}
\bibliographystyle{imsart-nameyear}
\RequirePackage[colorlinks,citecolor=blue,urlcolor=blue]{hyperref}
%\arxiv{arXiv:1908.00955}
\setattribute{journal}{name}{ }

\startlocaldefs

%\RequirePackage[colorlinks=true,citecolor=blue,urlcolor=blue]{hyperref}
%\usepackage[ruled,vlined]{algorithm2e}
%%%%% BEGIN TYPOGRAPHY SETUP:
%\usepackage[latin1]{inputenc}
\usepackage[utf8]{inputenc}
\usepackage[OT1]{fontenc}
\usepackage{enumerate}
%\usepackage{tikz}
%\frenchspacing
%\usepackage{indentfirst}
\usepackage{xcolor}
\usepackage[top=1in, bottom=1in, left=1in, right=1in]{geometry}
%\usepackage{authblk}
%\usepackage{mathtools}
%%%%% END TYPOGRAPHY SETUP
%BIBLIO 
%\usepackage[style=authoryear,
%  mergedate=false,
%  maxcitenames=1, 
%  mincitenames=1, 
%  maxbibnames=999, 
%  minbibnames=999, 
%  uniquename=false,
%  uniquelist=minyear,
%  ibidtracker=context,
%  labeldate=true,
%  hyperref=true,
%  isbn=false,
%  dashed=false,
%  eprint=false,
%  doi=false,
%  firstinits=true,
%  url=true]{biblatex}
%\usepackage[firstinits=true]{biblatex}
%\usepackage[doi=false,isbn=false,url=false,eprint=false, style=numeric,maxbibnames=99,giveninits=true,maxcitenames=99,backend=bibtex]{biblatex}
%\addbibresource{FullBibliographyWRPH.bib}
%\setlength{\bibsep}{0.0mm}
%%
\usepackage{graphicx}
\newcommand{\indep}{\rotatebox[origin=c]{90}{$\models$}}
\usepackage{amsfonts}
\usepackage{mathrsfs}	
\usepackage{amssymb}
\usepackage{dsfont}
\usepackage{bbm}

%Definition of theorems
\theoremstyle{plain}
\newtheorem{theorem}{Theorem}[section]
\newtheorem{lemma}[theorem]{Lemma}
\newtheorem{proposition}[theorem]{Proposition}

\theoremstyle{definition}
\newtheorem{definition}[theorem]{Definition}

\newtheorem{assumption}[theorem]{Assumption}

\newtheorem{remark}[theorem]{Remark}

%NUMBERING OF LEMMAS/THEOS AND SO ONE & TIME
\numberwithin{equation}{section}
\numberwithin{figure}{section}
\usepackage[nodayofweek]{datetime}

%TIME
\usepackage[nodayofweek]{datetime}

%%%%%%%%%%%%%%%%%
\newcommand{\Ito}{It$\hat{\text{o}}$} 
%Bold font

\newcommand{\bE}{\mathbb{E}}
\newcommand{\bF}{\mathbb{F}}

\newcommand{\bN}{\mathbb{N}}
\newcommand{\bP}{\mathbb{P}}
\newcommand{\bQ}{\mathbb{Q}}
\newcommand{\bR}{\mathbb{R}}

\def \E {\mathbb{E}}
%Calipgrahic

\newcommand{\cB}{\mathcal{B}}
\newcommand{\cC}{\mathcal{C}}

\newcommand{\cF}{\mathcal{F}}
\newcommand{\cG}{\mathcal{G}}

\newcommand{\cP}{\mathcal{P}}

\newcommand{\cX}{\mathcal{X}}
\newcommand{\cY}{\mathcal{Y}}

% % WILLIAM ADDED THESE COMMANDS
\renewcommand{\P}{\mathbb{P}}

%%%% Greeks etc. %%%%

%\def \o {\theta}
 \def\k{\kappa}

%%%%%%%%%%  newcommand by Lukasz

\newcommand{\1}{\mathbbm{1}}

%%% TO DO NOTEs %%%%%

% use the option "disable" to temporarily switch off
%\usepackage[disable,colorinlistoftodos,bordercolor=orange,backgroundcolor=orange!20,linecolor=orange,textsize=tiny]{todonotes}
\usepackage[colorinlistoftodos,bordercolor=orange,backgroundcolor=orange!20,linecolor=orange,textsize=tiny]{todonotes}

%\title{Existence and uniqueness theorems for McKean--Vlasov SDEs via Lyapunov functions techniques.}
%\title{Weak Existence and Uniqueness for McKean-Vlasov SDEs with Common Noise}
%\author[1,3]{William R.P. Hammersley}
%\author[1,3]{David \v{S}i\v{s}ka}
%\author[1,2,3]{{\L}ukasz Szpruch}
%\author{W.R.P. Hammersley}
%\author{David \v{S}i\v{s}ka}
%\author{Lukasz Szpruch}
%
% 
%\affil[1]{School of Mathematics, University of Edinburgh}
%\affil[2]{The Alan Turing Institute, London}
%\affil[3]{Maxwell Institute for Mathematical Sciences}
%\date{ \currenttime, \ddmmyyyydate\today}
%\date{}
\endlocaldefs
%%%%%%%%%%%%%%%%%%%%%%%%%%%%%%%%%%%%%%%%%%%%%%%%%%%%%%%%%%%%%%%%%%%%%%%%%%%%%%%%%%%%%%%%%%%%%%%%%%%%%%%%%%%%%%%%%%%%%%%%
%%%%%%%%%%%%%%%%%%%%%%%%%%%%%%%%%%%%%%%%%%%%%%%%%%%%%%%%%%%%%%%%%%%%%%%%%%%%%%%%%%%%%%%%%%%%%%%%%%%%%%%%%%%%%%%%%%%%%%%%
\begin{document}	%BEGINNING
%\selectlanguage{english}
%\maketitle
%\title{Weak Existence and Uniqueness for McKean-Vlasov SDEs with Common Noise}
%\runtitle{A Sample Document}
%\author[1,3]{William R.P. Hammersley}
%\author[1,3]{David \v{S}i\v{s}ka}
%\author[1,2,3]{{\L}ukasz Szpruch}
 
%\affil[1]{School of Mathematics, University of Edinburgh}
%\affil[2]{The Alan Turing Institute, London}
%\affil[3]{Maxwell Institute for Mathematical Sciences}
%%%%%%%%%%%%%%%%%%%%%%%%%%%%%%%%%%%%%%%%%%%%%%%%%%%%%%%%%%%%%%%%%%%%%%%%%%%%%%%%%%%%%%%%%%%%%%%%%%%%%%%%%%%%%%%%%%%

%AOPFRONTMATTER
%
\begin{frontmatter}
	\title{Weak Existence and Uniqueness for McKean-Vlasov SDEs with Common Noise} 
	\runtitle{Weak McKean-Vlasov SDEs with Common Noise}

\begin{aug}
\author{\fnms{William R.P.} \snm{Hammersley}\thanksref{t1,m1}\ead[label=e1]{w.r.p.hammersley@sms.ed.ac.uk}},
\author{\fnms{David} \snm{\v{S}i\v{s}ka}\thanksref{m1}\ead[label=e2]{d.siska@ed.ac.uk}}
\and 
\author{\fnms{{\L}ukasz} \snm{Szpruch}\thanksref{m1,m2}		\ead[label=e3]{l.szpruch@ed.ac.uk}} 

\thankstext{t1}{William Hammersley was supported by The Maxwell Institute Graduate School in Analysis and its Applications, a Centre for Doctoral Training funded by the UK Engineering and Physical Sciences Research Council (grant EP/L016508/01), the Scottish Funding Council, Heriot-Watt University and the University of Edinburgh.}
\runauthor{W.R.P. Hammersley et al.}

\affiliation{School of Mathematics, University of Edinburgh\thanksmark{m1} and The Alan Turing Institute, London\thanksmark{m2}, \printead*{e1,e2,e3}}

 %\address{School of Mathematics, University of Edinburgh,\\ 
%	James Clerk Maxwell Building, Peter Guthrie Tait Road,\\ Edinburgh EH9 3FD\\
%\printead{e1}\\
%\printead{e2}\\
%\printead{e3}}
	\end{aug}
%%{\bf Unfinished first draft, please do not distribute.}

%%%%%%%%%%%%%%%%%%%%%%%%%%%%%%%%%
\begin{abstract}
%	\small
 This paper concerns the McKean-Vlasov stochastic differential equation (SDE) with common noise. An appropriate definition of a weak solution to such an equation is developed. The importance of the notion of compatibility in this definition is highlighted by a demonstration of its r\^ole in connecting weak solutions to McKean-Vlasov SDEs with common noise and solutions to corresponding stochastic partial differential equations (SPDEs). By keeping track of the dependence structure between all components in a sequence of approximating processes, a compactness argument is employed to prove the existence of a weak solution assuming boundedness and joint continuity of the coefficients (allowing for degenerate diffusions). Weak uniqueness is established when the private (idiosyncratic) noise's diffusion coefficient is non-degenerate and the drift is regular in the total variation distance. This seems sharp when one considers using finite-dimensional noise to regularise an infinite dimensional problem. The proof relies on a suitably tailored cost function in the Monge-Kantorovich problem and representation of weak solutions via Girsanov transformations.
 \end{abstract}

\begin{keyword}[class=MSC]
	\kwd[Primary ]{60H10}
	%\kwd{60K10}
	\kwd[; secondary ]{60H15}
\end{keyword}

\begin{keyword}
	\kwd{Stochastic McKean-Vlasov Equations}
	\kwd{Mean-Field Equations}
	\kwd{Girsanov Transformations}
\end{keyword}

\end{frontmatter}
%%%%%%%%%%%%%%%%%%%%%%%%%%%%%%%%%
%{\bf 2010 AMS subject classifications:} 
%Primary: 
%65C30% Stochastic differential and integral equations
%% 60H35%Computational methods for stochastic equations
%; secondary: 
%60H30%Applications of stochastic analysis (to PDE, etc.)
%.\\
%%%%%%%%%%%%%%%%%%%%%%%%%%%%%%%%%%%%%%%%%%%%%%%%%%%%%%%%%%
\vspace{-5mm}
{\hypersetup{linkcolor=black}
%\tableofcontents
}
%%%%%%%%%

%\input{McKean_Lyapunov-introduction-text}

 \section{Introduction}
 
 Distribution dependent stochastic differential equations have been the subject of extensive study since the paper of McKean \cite{mckean1966class}, who was inspired by Kac's foundations of kinetic theory \cite{Kac56}. These equations arise as the limiting behaviour of a representative particle from a mean-field interacting particle system as the number of particles tends to infinity. An introduction to the topic can be found in the notes of Sznitman \cite{sznitman1991topics}. In the case where there is a common noise influencing the individual particles, this correlation gives rise to a form of McKean-Vlasov stochastic differential equation (SDE) with conditioned non-linearity, referred to here as the McKean-Vlasov SDE with common noise. This equation describes the dynamics of a \emph{single} representative particle from the infinite system and is the focus of this paper.
 
 Throughout, let $I:=\mathbb{R}^+$. Given a stochastic process $X$ and a time $T\in I$, the process $X$ stopped at time $T$ will be denoted $X_{\cdot\wedge T}:=\{X_{t\wedge T}\}_{t\in I}$. Let the filtration generated by $X$ be denoted as $\bF^X:=\{\cF^X_t\}_{t\in I}$. Given a probability space supporting a random element $Y$ and a sub-sigma algebra $\cG$, let the regular conditional distribution of $Y$ given $\cG$, should it exist, be written $\mathscr L(Y|\cG)$. Henceforth, let $X$ denote an $\mathbb{R}^{d_X}$-valued stochastic process and let $\mu$ denote a stochastic process valued on the space of probability measures on the path space of $X$. Additionally, $\xi$ will be an $\mathbb{R}^{d_X}$-valued random vector and processes $B$ and $W$ are assumed to be Brownian motions of dimension $d_B$ and $d_W$, respectively. The stochastic inputs $B,W$ and $\xi$ are assumed to be mutually independent. The following system will be referred to as the McKean-Vlasov SDE with common noise: % Why the term strong is used will become clear in due course. 
 \begin{equation}\label{eq:MKVSDECN}
 \begin{split}
 X_t  =  & \xi  +\int_{0}^{t}b(s,X_{\cdot\wedge s},\mu_s)\, ds +\int_0^t \sigma(s,X_{\cdot\wedge s},\mu_s)\, dW_s+\int_{0}^{t} \rho(s,X_{\cdot\wedge s},\mu_s)\,dB_s,\\
 \mu_s= & \mathscr L  (X_{\cdot\wedge s}|\cF_s^{B,\mu}).\\
 %\mathscr L(\xi) & =\nu\\
 \end{split}
 \end{equation}
 At first sight, the equation satisfied by the random measure flow $\mu$ seems strange, however, should $\mu$ be adapted to $B$, the measure flow satisfies $\mu_s=\mathscr L (X_{\cdot\wedge s}|\cF_s^{B})$ and \eqref{eq:MKVSDECN} takes its more often seen form. %The filtration $\{\cF^{B,\mu}_t\}_{t\in I}$ is defined to be the natural filtration generated by the common noise $B$ and the measure valued process $\mu$. 
 % In order to avoid excessive repetition, the distribution of the initial condition, denoted $\mathscr L(\xi)$, is considered fixed as $\nu$.
  Let $\mathcal C$ denote $C(I;\mathbb R^{d_X})$ equipped with the topology of uniform convergence on compact time intervals and $\cP(\cC)$ denote the set of Borel probability measures on $\cC$ equipped with the topology of weak convergence. Finally, let $b,\sigma$ and $\rho$ be measurable functions from $I \times \cC \times \cP(\cC)$ into $\mathbb R^{d_X},\mathbb R^{d_X\times d_W}$ and $\mathbb R^{d_X\times d_B}$, respectively, that are always assumed to be at least \emph{progressive}. To clarify, a function $f$ on $I\times \cC\times \cP(\cC)$ is called \emph{progressive} if for any $t\in I$, $$f(t,x,m)=f(t,x_{\cdot\wedge t},m\circ\phi_t^{-1}),\text{ where }\phi_t:\cC\ni x\mapsto x_{\cdot\wedge t}\in \cC.$$ %Here, the common noise is the Brownian motion $B$, although one could also consider other means of common influence on the particles. For example, the initial positions of particles in the original particle system could have a common random influence, howe
 Of particular importance when working with a common noise are the notions of immersion and compatibility, which are recalled in the following definition. The reader is referred to \cite{carmona2017probabilistic,kurtz2014WeakandStrong,lackerdensesets} for more on these concepts and Appendix \ref{sec immcomp} for some equivalent conditions.
 \begin{definition}[Immersion and Compatibility]
 	Let two filtrations $\bF$ and $\mathbb G$ on a probability space $(\Omega,\cF,\bP)$ be such that $\bF\subset  \mathbb G$. Then $\mathbb F$ is said to be immersed in $\mathbb G$ under $\bP$ if every square integrable $\mathbb F$ martingale is a $\mathbb G$ martingale. For two stochastic processes $X$ and $Y$ defined on this probability space, $X$ is said to be compatible with $Y$ if $\bF^{Y}$ is immersed in $\bF^{X,Y}:=\bF^{X}\vee \bF^{Y}$ under $\P$.
 \end{definition}
 
 Given a measure $\mu$ and an integrable function $f$, let $\langle\mu,f\rangle:=\int f d\mu$. Under appropriate \textit{compatibility} conditions and further specialisation of the coefficients $b,$ $\sigma$ and $\rho$ it will be demonstrated that weak solutions to \eqref{eq:MKVSDECN} yield measure valued solutions to the following SPDE that are both analytically and probabilistically weak. Analytically weak means that the solution is defined via its action on test functions and their derivatives. Probabilistically weak means that the measure valued solution process is not necessarily adapted to the stochastic input (a Brownian motion in this case). 
 The SPDE solved is given as: $\P$-a.s. for all $t\in I$ and all $\varphi \in C^2_b(\mathbb R^{d_X})$
 % Given a particular \emph{compatibility} structure of solutions, one can connect solutions to the McKean-Vlasov SDE with common noise to solutions to a corresponding SPDE. 
 \begin{equation}
 \label{eq:SPDE}
 \langle  \nu_t, \varphi\rangle =\langle \nu_0,\varphi\rangle+ \int_0^t \langle  \nu_s, L\varphi (s,\cdot,\nu_s)\rangle \, ds  +  \int_0^t \langle \nu_s, \partial_x \varphi \rho(s,\cdot,\nu_s) \rangle\,dB_s,\,\,
 \end{equation}
 where $C^2_b(\mathbb R^{d_X})$ is the set of real valued functions on $\mathbb R^{d_X}$ with continuous and bounded mixed derivatives up to second order. Further, $\partial_x\varphi$ denotes the vector of first order derivatives of $\varphi$ with respect to the components of $x$ and the operator $L$ acts on $C^2_b(\mathbb R^{d_X})$ test functions as follows:
 $$L\varphi (t,x,\mu):= b(t,x,\mu)\partial_x\varphi+\frac{1}{2}\text{trace}((\sigma\sigma^T+\rho\rho^T)(t,x,\mu)\partial_{xx}^2\varphi),$$
 where $\partial^2_{xx}\varphi$ is the matrix of mixed second order derivatives with respect the components of $x$. %The correspondence of strong solutions is a simple corollary of the weak correspondence.
\paragraph{First Key Result: See Theorem \ref{thm spdecorrespondence}} Assume that the coefficients $b$, $\sigma$ and $\rho$ are bounded and \emph{Markovian} in the sense that $(b,\sigma,\rho)(t,x,m)=(b,\sigma,\rho)(t,x_t,m\circ\psi_t^{-1})$ where $\psi_t:\cC\ni x\rightarrow x_t\in\mathbb R^{d_X}$. Then, the existence of a weak solution (to be defined) to the McKean-Vlasov SDE with common noise implies the existence of a measure valued solution the SPDE \eqref{eq:SPDE}.\newline
 
Motivated by the weak formulation of mean field games with common noise given by Carmona, Delarue and Lacker in \cite{carmona2015mfgcn}, careful definitions of strong and weak solutions are given that facilitate this correspondence.
  %In the weak formulation, the McKean-Vlasov SDE with common noise can be viewed as a fixed point problem that highlights the additional difficulty of studying particle systems driven by correlated diffusions.
 In this framework, the statements can be brought in line with the generalisation of the well known equivalence of Yamada-Watanabe given by Kurtz in \cite{kurtz2014WeakandStrong}, justifying the form of the solution definitions. Secondly, this framework enables one to keep track of the dependence structure of approximations. This is key in allowing the use of compactness techniques, which are core to the weak existence result for the McKean-Vlasov SDE with common noise given in this paper:\vspace{-0.5mm}
 \paragraph{Second Key Result: See Theorem \ref{thm wkex}} There exists a weak solution to \eqref{eq:MKVSDECN} of the type given in Definition \ref{def weaksoln} under assumptions of boundedness and joint continuity of the coefficients and integrability of the initial vector $\xi$. \vspace{3mm}\newline
The above theorem can be used to help establish an existence result for a particular class of coefficients: 
 \paragraph{Third Key Result: See Theorem \ref{thm wkexintegrated}} Assuming integrability of the initial condition and that the coefficients are Markovian, satisfy a non-degeneracy condition and their dependence on measure is of a linear integrated form with bounded measurable interaction kernel, the corresponding McKean-Vlasov SDE with common noise has a weak solution. \vspace{3mm}\newline
Strong uniqueness of solutions to the McKean-Vlasov SDE with common noise has been long established under the conditions of monotonicity \cite{dawsonVaillancourt1995} or Lipschitz continuity \cite{kurtzXiong1999}. The final and main contribution of this paper is to shed light on the question of uniqueness when the regularity of the coefficients is relaxed. In a non-degenerate setting, uniqueness in joint law for solutions to the McKean-Vlasov with common noise may be established:% for a particular class of equations:
\paragraph{Fourth Key Result: See Theorem \ref{thm jwkuniq}} Assume that the diffusion coefficients $\sigma$ and $\rho$ do not depend upon measure and there exists a unique strong solution to the drift-less equation. Let the private noise coefficient $\sigma$ satisfy a non-degeneracy condition and let $\sigma^{-1}b$ be total variation Lipschitz in the measure argument and bounded. Then, the equation \eqref{eq:MKVSDECN} satisfies uniqueness in joint law.\newline
 
The assumptions in the above result allow for only measurability (progressive) in the path argument of $b$ with the price of non-degeneracy of the private noise coefficient $\sigma$. This extends a weak uniqueness argument employed in the case without common noise \cite{campi2018,jabir2019poc,lackerGirsanov2018,stannat2019,mishura2016existence} to the case with a common noise. This idea of uniqueness proof, recently introduced by Mishura and Veretennikov \cite{mishura2016existence}, relies on representing two solutions by Girsanov Transformations from an intermediary probability space and estimating the total variation between the distribution of two solutions. Here, a particular Monge-Kantorovich problem for the path-distributions of solutions is studied, instead of the total variation distance, utilising a cost function tailored to this setting. % This proof technique enables access, in the case of no common noise, to studying the \emph{propagation of chaos}, namely, that a fixed number of particles become independent in the infinite particle system, see Lacker \cite{lackerGirsanov2018} and Jabir \cite{jabir2019poc}. 
It is easy to see that there is a non-empty intersection of the family of coefficients satisfying the assumptions of Theorem \ref{thm wkexintegrated} and Theorem \ref{thm jwkuniq} for which joint weak existence-uniqueness holds. %\color{red} A forthcoming exploration of this phenomenon for the common noise setting and an extension of \cite{hammersley2018} to the common noise setting has been separated from this paper to allow focus to be given to the underlying difficulties of having a common noise in the weak formulation.  \color{black}

Recently, there has been renewed interest in equations \eqref{eq:MKVSDECN} and \eqref{eq:SPDE}. A brief summary is presented below. This is roughly separated into two categories. The first category comprises of results related to McKean-Vlasov SDEs with common noise and/or stochastic partial differential equations (SPDEs) and the second includes those regarding Mean-Field Games with common noise.
 
Firstly, in contexts a little different from that of this paper, Barbu, R\"ockner and Russo \cite{barbuRocknerRusso2017} consider a type of stochastic porous media equation and Briand et al. \cite{briandCCH2019} study the problem of forwards and backwards SDEs where the distribution of any solution is constrained in some fashion and they extend their analysis to the common noise setting, where instead the conditional distributions are constrained. For well-posedness of a particular class of the McKean-Vlasov SDE with common noise and the corresponding SPDE, see the paper of Coghi and Gess \cite{gess2019} and see those of Kolokoltsov and Troeva \cite{kolokoltsovTroeva2018mkvcn,kolokoltsovTroeva2017} for the sensitivity of solutions to perturbation of the initial data. For models motivated by application to finance and neuroscience, see Hambly and S{\o}jmark \cite{hamblySojmark2019} and Ledger and S{\o}jmark \cite{ledgerSojmark2018}.
Crisan, Janjigian and Kurtz \cite{crisanJanjigianKurtz2018partRepBoundCNDs} study a class of SPDEs that includes the Stochastic Allen-Cahn equation, extending the earlier work of Kurtz and Xiong \cite{kurtzXiong1999} where strong solutions to an infinite system of mean-field interacting particles driven by correlated noises are connected to strong solutions to a non-linear stochastic partial differential equation (SPDE) via the empirical distribution of the particles. Another approach to studying the types of SPDEs associated to particle systems driven by correlated noises is that of Dawson and Vaillancourt \cite{dawsonVaillancourt1995} who obtain measure-valued solutions of the aforementioned SPDE by studying the limit of empirical distributions to interacting systems of finitely many particles as the particle number increases to infinity.
 
In tandem, the mean field game theoretic framework introduced by Huang, Malham\'e and Caines \cite{huangMalhameCaines2006} and Lasry and Lions \cite{lasryLions2007} has recently been subject to rapid development in the direction of common noise. For general theoretical results pertaining to well-posedness of the infinite player equilibrium and its closeness to the finite player equilibria, see \cite{ahuja2016,carmona2015mfgcn,kolokoltsovTroeva2019mfgcn,kolokoltsovTroeva2019mfgcnstable,lacker2014} and the book of Cardaliaguet, Delarue, Lasry and Lions \cite{cardaliaguet2019master}. To see how the presence of a common noise can restore uniqueness to the mean field game, see the papers of Delarue and Tchuendom \cite{delarue2019restoring,delarueTchuendom2019selection,tchuendom2018}. A substantial introduction to mean field games with common noise can be found in the second volume of the book of Carmona and Delarue \cite{carmona2017probabilistic}.
 %This note explores the so called Stochastic McKean-Vlasov Equation or McKean-Vlasov SDE with common noise, as studied by ...  %Kurtz and Xiong in \cite{KurtzXiong}, Dawson Vaillancourt, Carmona Delarue and Lacker in the MFG context +++ more
 %This paper explores the Stochastic McKean-Vlasov Equation, a non-linear stochastic partial differential equation (SPDE). 
 %\color{red}
 The standard McKean-Vlasov setting with no common noise remains a popular field of study, with many new results. To list but a few: \cite{barbuRockner2018NLFPKtoDDSDE}, \cite{bossyJabir2018WPMKVsingularDiff},  \cite{hammersley2018}, \cite{huang2019PathDepDDSDEsingCoeff}, \cite{huangWang2018DDSDEsSingularCoeffs}, \cite{jabin2018inventiones},  \cite{chaudruFrikha2018wellPosedness} and \cite{rocknerZhang2018WPDDSDEsingularDrift}.
 \color{black}
 
 In summary, the key contributions of this paper are as follows: first, an appropriate framework is developed which allows one to study weak solutions of McKean-Vlasov SDEs with common noise and, using the compatibility of solutions, connect them with weak solutions of SPDEs, second, this framework allows the use of compactness arguments to obtain weak solutions to said equations and finally, a weak uniqueness result is obtained by a technique inspired by the method introduced in \cite{mishura2016existence}.
 
 % This particular case has been studied without common noise in \cite{lackerGirsanov2018} and \cite{mishura2016existence}.
 %\subsection{Set-up and Notation}

 \subsection{Definitions of Solutions}
 
To begin, let $\bF^{B,W,\xi}=\{\cF^{B,W,\xi}_t\}_{t\in I}$ be defined by $\cF^{B,W,\xi}_t:=\cF^B_t\vee\cF^W_t\vee\sigma(\xi)=\sigma(B_s,W_s,\xi;0\leq s\leq t)$ for all $t\in I$ and similarly $\bF^{B,\mu}=\{\cF^{B,\mu}_t\}_{t\in I}:=\{\cF^B_t\vee \cF^\mu_t\}_{t\in I}=\{\sigma(B_s,\mu_s;0\leq s\leq t)\}_{t\in I}$. When dealing with a measure space $(\Omega,\cF)$ equipped with multiple probability measures, say $\{\P^{i}\}_i$, denote the laws induced by a random element $X$ under these measures as $\mathscr L^{i}(X)$. Vector and matrix norms will be denoted as $|\cdot|$ and $L_p$ norms as $|\cdot|_{L_p}$. Consider the following definition of a strong solution to \eqref{eq:MKVSDECN}:
 \begin{definition}[Strong Solution to the McKean--Vlasov SDE with Common Noise]\label{def:strong1}
 	A filtered probability space $(\Omega,\cF,\bF,\P)$ equipped with $\bF$ Brownian motions $B$ and $W$ and initial condition $\xi$, all mutually independent, and an $\bF$ adapted $\mathbb R^{d_X}$ valued process $X$ is said to be a \textit{strong solution} to the McKean-Vlasov SDE with common noise if the following conditions hold:
 	\begin{enumerate}[i)]
 		\item $\P$-a.s. for all $t\in I$, $	\int_0^t (|b|+|\sigma|^2+|\rho|^2)(s,X_{\cdot \wedge s},\mathscr L(X_{\cdot \wedge s}|\cF^{B}_s))\, ds < \infty.$
 		%$$\hspace{-5mm}	\int_0^t |b(s,X_{\cdot \wedge s},\mathscr L(X_{\cdot \wedge s}|\cF^{B}_s))| +|\sigma(s,X_{\cdot \wedge s}  ,\mathscr L(X_{\cdot \wedge s}|\cF^{B}_s))|^2
 		%+|\rho(s,X_{\cdot \wedge s},\mathscr L(X_{\cdot \wedge s}|\cF^{B}_s))|^2\, ds < \infty.$$
 		%	\end{split}
 		%	\end{equation}
 		\item $X$ is $\bF^{B,W,\xi}$ \label{adaptednesscondition} adapted.
 		\item $\P$-a.s. for all $t\in I$,
 		\[
 		\begin{split}
 		\hspace{-3.5mm}X_t  =   \xi  +\int_{0}^{t}b(s,X_{\cdot\wedge s},\mathscr L  (X_{\cdot\wedge s}|\cF_s^{B}))\, ds & +\int_0^t \sigma(s,X_{\cdot\wedge s},\mathscr L  (X_{\cdot\wedge s}|\cF_s^{B}))\, dW_s\\
 		& +\int_{0}^{t} \rho(s,X_{\cdot\wedge s},\mathscr L  (X_{\cdot\wedge s}|\cF_s^{B}))\,dB_s.
 		\end{split}
 		\]
 	\end{enumerate}
 	
 \end{definition}
 
 %One can connect the above strong solution to a strong solution to the SPDE \eqref{eq:SPDE}. This is done by application of \Ito's formula to $\phi(X_t)$, taking conditional expectation with respect to $\cF^{cn}_t$, and using the definition of the regular conditional distribution along with the fact that $\mathscr L(X_s|\cF^{cn}_t)=
 One can view a strong solution to the SDE \eqref{eq:MKVSDECN} as a triple of \textit{stochastic inputs} $(B,W,\xi)$ defined on some probability space and a Borel measurable mapping $F: C(I;\mathbb R^{d_B})\times  C(I;\mathbb R^{d_W})\times \mathbb R^{d_X}\rightarrow\mathbb R^{d_X}$ such that $F$ maps the stochastic inputs $(B,W,\xi)$ to an $\bF^{B,W,\xi}$ adapted stochastic process $X:=F(B,W,\xi)$ (the output) such that $(X,B,W,\xi)$ satisfies \eqref{eq:MKVSDECN}. In the language of Kurtz \cite{kurtz2014WeakandStrong} this is a strong compatible solution.
 
 A guess at a good definition for a weak solution could be to remove the adaptedness requirement \ref{adaptednesscondition}) from the above conditions and then ask that a weak solution should consist of a filtered probability space with the rest of Definition \ref{def:strong1} unchanged. For clarity this is subsequently written (the choice of terminology `weak-strong' will be justified after the definition).
 
 \begin{definition}[Weak-Strong Solution to the McKean--Vlasov SDE with Common Noise]\label{def weakstrongmu}
 	A weak-strong solution to \eqref{eq:MKVSDECN} consists of a filtered probability space $(\Omega,\cF,\bF,\P)$ equipped with $\bF$ Brownian motions $B$ and $W$ and initial condition $\xi$, all mutually independent, along with an $\bF$ adapted $\mathbb R^{d_X}$ valued process $X$ that satisfies the following conditions:
 	\begin{enumerate}[i)]
 		\item $\P$-a.s. for all $t\in I$, $	\int_0^t (|b|+|\sigma|^2+|\rho|^2)(s,X_{\cdot \wedge s},\mathscr L(X_{\cdot \wedge s}|\cF^{B}_s))\, ds < \infty.$
 		%\item $\P$-a.s. for all $t\in I$,
 		%$$\hspace{-5mm}\int_0^t |b(s,X_{\cdot\wedge s},\mathscr L(X_{\cdot\wedge s}|\cF^{B}_s))|+|\sigma(s,X_{\cdot\wedge s},\mathscr L(X_{\cdot\wedge s}|\cF^{B}_s))|^2+|\rho(s,X_{\cdot\wedge s},\mathscr L(X_{\cdot\wedge s}|\cF^{B}_s))|^2\, ds < \infty.$$
 		\item $\P$-a.s. for all $t\in I$,
 		\[
 		\begin{split}
 		\hspace{-3.5mm}X_t  =   \xi  +\int_{0}^{t}b(s,X_{\cdot\wedge s},\mathscr L  (X_{\cdot\wedge s}|\cF_s^{B}))\, ds & +\int_0^t \sigma(s,X_{\cdot\wedge s},\mathscr L  (X_{\cdot\wedge s}|\cF_s^{B}))\, dW_s\\
 		& +\int_{0}^{t} \rho(s,X_{\cdot\wedge s},\mathscr L  (X_{\cdot\wedge s}|\cF_s^{B}))\,dB_s.
 		\end{split}
 		\]
 		%	\item $X_0=\xi\sim\nu $.
 		%	\item $(W,X_0)\perp B$
 	\end{enumerate}
 \end{definition}
 
 %There are multiple problems with such a na\"ive definition. One cannot immediately connect a solution of this type to the corresponding SPDE \eqref{eq:SPDE}; as will later be demonstrated, a condition of compatibility enables this connection.
 There is an unfortunate shortcoming of such a definition. One can construct an example where weak solutions are expected to exist, but there are none of the above type. See counter-example 5.1 in \cite{carmona2015mfgcn}. The issue is that one asks that the flow of conditional distributions $\mu$ from \eqref{eq:MKVSDECN} should be adapted to the filtration generated by $B$ and so whilst the process $X$ might not be adapted to the stochastic inputs, the flow of conditional distributions must be. This justifies the terminology weak-strong. Since it is preferable to define weak solutions in such a way that they can be obtained under conditions comparable to the case without common noise, the relaxation to equation \eqref{eq:MKVSDECN} will be made, justified by the following argument. 
 
 Since measurability is not generally preserved under weak limits, methods for approximating the flow of conditional distributions break down. To expand upon this point, imagine that one is solving a stochastic equation $$\Gamma(Y,Z)=0,\,\,\,Y\sim \nu.$$ The notation $Y\sim\nu$ means that the stochastic input $Y$ has distribution $\nu$. $Z$ is the solution/output. Often, one seeks to solve the above by instead considering a mollified equation $\Gamma^n(Y,Z)=0,\,\,\,Y\sim \nu$ such that $``\Gamma^n\rightarrow \Gamma"$ and $\forall n$ the equation is $strongly$ solvable; i.e. there is a measurable function $F^n$ such that $Z^n:=F^n(Y)$ is a solution. Then, passing to the limit in some sense $``\Gamma^n(Y,Z^n)\rightarrow \Gamma(Y,Z)"$ one hopes to recover a solution to the original equation. 
 
 In the case of compactness arguments (weak existence), one may prove the weak convergence of a subsequence of the joint distributions of approximate solutions $(Y,Z^n)$ and represent the solutions on a another probability space $(\bar \Omega,\bar \cF,\bar \P)$ such that $(\bar Y^n,\bar Z^n)\rightarrow(\bar Y,\bar Z)$ pointwise. Since $(\bar Y^n,\bar Z^n)$ have the same distribution as $(Y,Z^n)$, one gets $F^n(\bar Y^n)=\bar Z^n$. Therefore $\bar Z$ is the pointwise limit of $\bar Y^n$ measurable functions, but unfortunately, $\bar Y^n$ varies along the same limit, and one cannot conclude that there is a measurable function $F$ such that $\bar Z=F(\bar Y)$. In fact, the existence of such a function corresponds to the existence of a strong solution. 
 
 %In the case of McKean-Vlasov with Common noise, one can show for some approximation scheme $(X^n,\mu^n)$, $\bar \mu_t^n=\mathscr L(\bar X^n_{\cdot\wedge t}|\cF^{\bar B^n})$, yet $\bar \mu_t=\lim_n\bar\mu^n_t$ cannot be claimed to be $\cF^{\bar B}$ measurable. Seeking to identify the connection of the limiting random measure $\bar \mu$ to $\bar X$, the relaxation $\bar \mu_t=\mathscr L(\bar X_{\cdot\wedge t}|\cF_t^{\bar B,\bar \mu})$ is made. 

 The above observations give motivation to relax the measurability requirement of the regular conditional distribution appearing in the equation \eqref{eq:MKVSDECN}. Rather than asking that the measure argument of the coefficients be a version of $\mathscr L(X_{\cdot\wedge s}|\cF^{B}_s)$, one should instead require that the argument be a flow of measures $\mu$ such that for any $s\in I$, $\mu_s=\mathscr L(X_{\cdot\wedge s}|\cF^{B,\mu}_s)$. This relaxation is natural as, in general, this is the only way of identifying the limiting random measures obtained via weak convergence arguments. %Conditions under which one can obtain weak solutions of the above type will be demonstrated later.
 %\wh{Following sentence is too vague}
 
 Compatibility however, is preserved under weak limits when the marginal distribution of the stochastic inputs is fixed (see \cite{lackerdensesets}). Due to this fact and the above motivation of connecting to the SPDE, a compatibility condition is introduced in the following definition.

 \begin{definition}[Weak Solution to the McKean--Vlasov SDE with Common Noise]\label{def weaksoln}
 	A weak solution to the McKean-Vlasov SDE with common noise consists of a filtered probability space $(\Omega,\cF,\bF,\P)$ equipped with $\bF$ Brownian motions $B$ and $W$ and an $\cF_0$ measurable random vector $\xi$, all mutually independent, along with $\bF$ adapted processes $X$ and $\mu$ that are $\mathbb R^{d_X}$ and $\cP(\cC)$ valued respectively, satisfying the following conditions:
 	\begin{enumerate}[i)]
 		\item $\int_0^t (|b(s,X_{\cdot\wedge s},\mu_s)|+|\sigma(s,X_{\cdot\wedge s},\mu_s)|^2+|\rho(s,X_{\cdot\wedge s},\mu_s)|^2\, )ds < \infty$ $\P$-a.s. for all $t\in I$.
 		\item $X$ is compatible with $(B,\mu)$, $(X,\mu)$ is compatible with $(B,W,\xi)$ and for $s,t\in I$ with $s\leq t$, $\sigma(W_r-W_s:s\leq r\leq t)\indep \cF^{B,\mu}_t\vee\cF^X_s$.
% 		\item $X$ is compatible with $(B,\mu)$, $ (W,\xi)\perp(B,\mu)$, $(X,\mu)$ is compatible with $(B,W,\xi)$.
 		%, $(X,\mu)$ is compatible with $(B,W,\xi)$ and $\mu$ is compatible with $B$
 		\item $\mu_t=\mathscr L(X_{\cdot\wedge t}|\cF^{B,\mu}_t)$ for all $t\in I$.
 		\item $\P$-a.s. for all $t\in I$, 
 		\begin{equation}\label{eq:MKVSDECNweak}
 		\begin{split}
 		X_t= \xi+\int_{0}^{t}b(s,X_{\cdot\wedge s},\mu_s)\, ds & +\int_0^t \sigma(s,X_{\cdot\wedge s},\mu_s)\, dW_s  +\int_{0}^{t} \rho(s,X_{\cdot\wedge s},\mu_s)\,dB_s. 
 		\end{split}
 		\end{equation}
 		%	\item $\xi\sim\nu $
 		
 	\end{enumerate}
 \end{definition}
 %\wh{NEED TO ADD THE LEMMA SHOWING THAT THE COMPATIBILITY OF X WITH $\mu,B,W,\xi$ is STABLE UNDER WEAK LIMITS.}
 % \wh{$\mu$ must be compatible with $B$, since $B$ is a Brownian motion in the larger filtration to which $\mu$ is adapted- See Lacker Dense sets paper \cite{lackerdensesets} point (vi) of theorem 3.11. I just write this here to be as explicit as possible and to try and not have a long list of 'hidden' assumptions.}
 %\wh{Given how filtrations will be constructed, I add the further independence assumption that $X_0$ and $W$ are also indep of $\mu$. Compatibility would suffice.} 
 %\wh{Point 3. is a little different to what Dan wrote is his note. We condition on the common noise and mu up to time t, rather than the full path of both, this has the difference of demanding also the compatibility of X seen above. Equivalence is seen below, but I thought it perhaps more instructive this way round? }
 %At this stage, one has the tacit assumption that $X_0$ is compatible with $(B,W)$. In order to avoid messiness later, it will be further assumed that $\mathscr L(X_0,B,W)$ is a product measure. 
 %\wh{Or write that $X_0$ is independent of $B,W,\mu$.... }
In this definition, there is now a pair of outputs, $(X,\mu)$. As a weak solution, these outputs are allowed to have randomness external to that of the stochastic inputs, $(\xi,B,W)$ (i.e. there is not a priori a Borel function $G$ s.t. $(X,\mu)=G(B,W,\xi)$). Further, see that if condition ii) were removed, it would remain implied that $(X,\mu)$ is compatible with $(B,W,\xi)$ since the processes $B$ and $W$ are assumed to be Brownian in the filtration $\bF$ to which all processes are adapted and $\xi$ is assumed $\cF_0$ measurable. However, as these properties will need to be verified in the existence proof to prove that the limiting Brownian motions remain Brownian in the full filtration (generated by all limit processes), they are kept explicit in the definition.% The decision to define $\mu$ as a stochastic process valued in $\cP(\cC)$ rather than simply a $\cP(\cC)$ valued random element was made to allow for more succinct compatibility criteria. %Further, the last two criteria of condition 2 are redundant considering that $W$ and $B$ are Brownian in the filtration $\bF$ to which $X$ and $\mu$ are adapted. 

 To further justify considering the flow of measures $\mu$ as part of the solution pair, or `stochastic outputs', note that it is desirable for the definition of a weak solution to be in accord with the Yamada-Watanabe principle.
 
% \color{red}
% Consider that pathwise uniqueness for the McKean--Vlasov SDE with common noise were defined such that for any two weak solutions $(X,\mu,B,W,\xi)$ and $(X',\mu,B,W,\xi)$ on the same probability space, stochastic inputs $B$ and $W$ and flow of measures $\mu$ (viewed as a stochastic input), $X$ and $X'$ are indistinguishable. Weak existence combined with pathwise uniqueness should yield existence of a unique strong solution. Now, for a weak solution $(X,\mu,B,W,\xi)$ considering $\mu$ as a fixed input to the equation \eqref{eq:MKVSDECNweak}, then the classical Yamada-Watanabe result implies that $X$ is adapted to $\bF^{\mu,B,W,\xi}$. This however, is not enough to conclude that $X$ is in fact a strong solution of Definition \ref{def:strong1}, since one does not know whether $\mu$ is $B$ adapted and thus it cannot be identified as a version of the conditional distribution of $X$ given $\bF^{B}$ nor can $X$ be shown to be adapted to the potentially smaller filtration $\bF^{B,W,\xi}$. 
% \color{black}
% 
 Consider the solution as a pair $(X,\mu)$. Defining pathwise uniqueness such that for any two weak solutions $(X,\mu,B,W,\xi)$ and $(X',\mu',B,W,\xi)$ defined on the same probability space, $(X,\mu)$ and $(X',\mu')$ are indistinguishable. Then by way of the Yamada-Watanabe generalisation of Kurtz \cite{kurtz2014WeakandStrong}, assuming pathwise uniqueness, $(X,\mu)$ becomes $\bF^{B,W,\xi}$ adapted and therefore, due to the independence structure, one can identify $\mu=\mathscr L(X|\cF^{B})$ and recover a strong solution of Definition \ref{def:strong1}. 
 In keeping with the concept of a strong solution used by Kurtz in \cite{kurtz2014WeakandStrong}, the following simple proposition demonstrates that the notion of weak solution given by Definition \ref{def weaksoln} is appropriate. 
 
 \begin{proposition}
 	A strong solution given by Definition \ref{def:strong1} is equivalent to an $\bF^{B,W,\xi}$ adapted weak solution pair $(X,\mu)$ of Definition \ref{def weaksoln}.
 \end{proposition}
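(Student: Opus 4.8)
Since the statement is an equivalence, the plan is to establish the two implications in turn; the routine direction is \emph{strong $\Rightarrow$ $\bF^{B,W,\xi}$-adapted weak pair}, while the content lies in the converse, where the measure flow has to be identified.

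\emph{From a strong solution to an $\bF^{B,W,\xi}$-adapted weak pair.} Given a strong solution $(\Omega,\cF,\bF,\P,B,W,\xi,X)$ I would set $\mu_t:=\mathscr L(X_{\cdot\wedge t}\mid\cF^B_t)$ --- a jointly measurable regular version exists since $\cC$ is Polish --- and work with the filtration $\bF^{B,W,\xi}$, in which $B$ and $W$ are still Brownian motions by the assumed mutual independence, so the stochastic integrals in \eqref{eq:MKVSDECNweak} are unchanged. Because $\mu$ is then $\bF^B$-adapted, $\cF^{B,\mu}_t=\cF^B_t$, whence conditions (i), (iii), (iv) of Definition~\ref{def weaksoln} are immediate transcriptions of the conditions of Definition~\ref{def:strong1}. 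The only genuine verification is condition (ii). Compatibility of $(X,\mu)$ with $(B,W,\xi)$ is automatic, since $X$ and $\mu$ are $\bF^{B,W,\xi}$-adapted so that $\bF^{X,\mu,B,W,\xi}=\bF^{B,W,\xi}$. For compatibility of $X$ with $(B,\mu)$ it suffices, as $\cF^{B,\mu}_t=\cF^B_t$, to show $\bF^B$ is immersed in $\bF^{X,B}$: I would first record that $\bF^B$ is immersed in $\bF^{B,W,\xi}$, a direct consequence of the independence of $B$ from $(W,\xi)$ (for a square-integrable $\bF^B$-martingale $M$ one has $\E[M_t\mid\cF^B_s\vee\cF^W_s\vee\sigma(\xi)]=\E[M_t\mid\cF^B_s]$, since $\sigma(M_t)\vee\cF^B_s$ is independent of $\cF^W_s\vee\sigma(\xi)$), and then descend to $\bF^{X,B}\subseteq\bF^{B,W,\xi}$ by the tower property. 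Finally $\sigma(W_r-W_s:s\le r\le t)\indep\cF^B_t\vee\cF^X_s$ follows from $\cF^X_s\subseteq\cF^{B,W,\xi}_s$ together with the independent increments of $W$ and the independence of $W$ from $(B,\xi)$.

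\emph{From an $\bF^{B,W,\xi}$-adapted weak pair to a strong solution.} Let $(X,\mu)$ be an $\bF^{B,W,\xi}$-adapted weak solution. The crux is to show that $\mu$ is in fact $\bF^B$-adapted, i.e. that $\mathscr L(X_{\cdot\wedge t}\mid\cF^{B,\mu}_t)=\mathscr L(X_{\cdot\wedge t}\mid\cF^B_t)$ for every $t$, so that (iii) reads $\mu_t=\mathscr L(X_{\cdot\wedge t}\mid\cF^B_t)$. I would use condition (ii) with $s=0$, which gives $\cF^W_t\indep\cF^{B,\mu}_t\vee\sigma(\xi)$, together with the $\bF^{B,W,\xi}$-measurability of $\mu_t$, to see by a monotone-class argument against the $\pi$-system of products $\cF^B_t\times\cF^W_t$ that $\mu_t$ carries no information from $\cF^W_t$ beyond $\cF^B_t$; ruling out residual dependence of $\cF^{B,\mu}_t$ on $\xi$ then relies on the compatibility of $X$ with $(B,\mu)$ and of $(X,\mu)$ with $(B,W,\xi)$. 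Once $\cF^{B,\mu}_t=\cF^B_t$, substituting $\mu_t=\mathscr L(X_{\cdot\wedge t}\mid\cF^B_t)$ into conditions (i) and (iv) of Definition~\ref{def weaksoln} yields verbatim conditions (i) and (iii) of Definition~\ref{def:strong1}; condition (ii) of Definition~\ref{def:strong1} is the assumed $\bF^{B,W,\xi}$-adaptedness of $X$, and $B$, $W$, $\xi$ carry the required independence by hypothesis, so $X$ is a strong solution.

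\emph{Expected main obstacle.} The difficulty is entirely in the converse direction: showing that for an input-adapted weak solution the compatibility conditions collapse the conditioning $\sigma$-algebra $\cF^{B,\mu}_t$ in the fixed-point relation (iii) down to $\cF^B_t$, so that the measure flow is genuinely carried by the common noise alone; the forward direction is routine bookkeeping with the mutual independence of $B$, $W$ and $\xi$.
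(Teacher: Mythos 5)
Your forward direction matches the paper's: set $\mu_t:=\mathscr L(X_{\cdot\wedge t}\mid\cF^B_t)$, observe $\cF^{B,\mu}_t=\cF^B_t$, and check the conditions; your explicit verification of condition~(ii) — reducing to the immersion of $\bF^B$ in $\bF^{X,B}$, obtained by sandwiching inside $\bF^{B,W,\xi}$ — is the part the paper leaves implicit and you carry it out correctly.

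The converse is where the content lies, and you correctly isolate the target (show $\mu$ is $\bF^B$-adapted), which is also the paper's route. Your $W$-step is sound: condition~(ii) with $s=0$ gives $\cF^W_t\indep\cF^{B,\mu}_t\vee\sigma(\xi)$, and combined with $\bF^{B,W,\xi}$-adaptedness of $\mu_t$ this yields $\cF^B_t\vee\sigma(\xi)$-measurability of $\mu_t$. But the $\xi$-step — the one you flag as the main obstacle — is only gestured at: you say ruling out residual $\xi$-dependence "relies on the compatibility" conditions, without an argument. That appeal does not succeed, and this is where both your sketch and the paper's own proof leave a gap. Take $b=\sigma=\rho=0$, $X\equiv\xi$, and $\mu_t:=\delta_{X_{\cdot\wedge t}}$ (the Dirac at the constant path $\xi$). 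Then $\cF^{B,\mu}_t=\cF^B_t\vee\sigma(\xi)$, $\mathscr L(X_{\cdot\wedge t}\mid\cF^{B,\mu}_t)=\delta_{X_{\cdot\wedge t}}=\mu_t$, all three parts of condition~(ii) hold (indeed $\bF^{B,\mu}=\bF^{X,B,\mu}$ and $\bF^{X,\mu,B,W,\xi}=\bF^{B,W,\xi}$, so the immersions are trivial, and $W$ remains independent of $\cF^B_t\vee\sigma(\xi)$), and the pair $(X,\mu)$ is $\bF^{B,W,\xi}$-adapted; yet $\mu_0=\delta_\xi$ is not $\cF^B_0$-measurable. The paper's converse step invokes "the independence of $(W,\xi)$ and $(B,\mu)$," but Definition~\ref{def weaksoln} only assumes $B,W,\xi$ mutually independent, not independence of $\xi$ from $\mu$; the example shows this stronger independence is not derivable from conditions (i)–(iv). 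So the converse requires either that independence as an additional hypothesis or a different argument than the compatibility appeal you make — as written, the step would fail.
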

 \begin{proof}
 	
 	%
 	%\begin{definition}[Strong Solution of Stochastic McKean--Vlasov SDE]\label{def:strong2} A \textit{strong solution} to the Stochastic McKean-Vlasov SDE consists of a filtered probability space $(\Omega,\cF,\bF,\P)$ equipped with $\bF$ Brownian motions $B$ and $W$ and initial condition $\xi$, all mutually independent, along with $\bF$ adapted processes $X$ and $\mu$ that satisfy the following conditions:
 	%	\begin{enumerate}
 	%		\item $\int_0^t |b(s,X_s,\mu_s))|+|\sigma(s,X_s,\mu_s))|^2+|\rho(s,X_s,\mu_s))|^2\, ds < \infty$ $\P$-a.s. for all $t\in I$
 	%		\item The equation \eqref{eq:MKVSDECNweak} holds $\P$-a.s. for all $t\in I$
 	%	%	\item $X_0\sim \nu$
 	%		\item $\mu_t=\mathscr L(X_t|\cF^{B,\mu}_t)$ for all $t\in I$, $X$ is compatible with $(B,\mu)$, $ (W,\xi)\perp(B,\mu)$, $(X,\mu)$ is compatible with $(B,W,\xi)$ and $\mu$ is compatible with $B$.
 	%		\item $(X,\mu)$ is $\bF^{B,W,\xi}$ adapted.
 	%		\wh{Does this need to be completed? The definition of $\bF^{B,W,X_0}$...apparently not as of yet...}
 	%	\end{enumerate}
 	
 	%\end{definition}

 	%\whdone{Indeed, once you have a strong compatible solution of this type on an arbitrary probability space carrying stochastic inputs $B,W,X_0$, then you have such a solution on any probability space carrying these inputs.}
 	%As promised, it will now be demonstrated that the existence of a strong solution of Definition \ref{def:strong1} is equivalent to that of Definition \ref{def:strong2}. 
 	Given a strong solution of the type of Definition \ref{def:strong1}, $(B,W,\xi,X)$, define a measure flow $\mu$ by $\mu_t:=\mathscr L(X_{\cdot\wedge t}|\cF^{B}_t)$. By definition, $(X,\mu,B,W,\xi)$ satisfies equation \eqref{eq:MKVSDECNweak} and the integrability condition. Since $\mu$ is $\bF^{B}$ adapted by construction, one has $\cF^{B,\mu}_t=\cF^{B}_t$ for all $t\in I$. Combining this fact with the $\bF^{B,W,\xi}$ adaptedness of $X$, the conditions of Definition \ref{def weaksoln} are easily verified. For the converse direction, note that the independence of $(W,\xi)$ and $(B,\mu)$ combined with the $\bF^{B,W,\xi}$ adaptedness of $\mu$ implies that $\mu$ is $\bF^{B}$ adapted. This in turn allows one to show that $\mu_t=\mathscr L(X_{\cdot\wedge t}|\cF^{B,\mu}_t)=\mathscr L(X_{\cdot\wedge t}|\cF^{B}_t)$ for all $t\in I$ and the equivalence follows. 
 \end{proof}
 
 Should one wish to obtain a weak solution via compactness arguments, when verifying the compatibility of $X$ with $(B,\mu)$ for the weak limit, it becomes advantageous to work with $\mu_t:=\mathscr L(X_{\cdot\wedge t}|\cF^{B,\mu}_\infty)$ and condition on the whole path of $(B,\mu)$. However, with the condition that $X$ is compatible with $(B,\mu)$ in the sense that $\cF^{X}_s$ is conditionally independent of $\cF^{B,\mu}_t$ given $\cF^{B,\mu}_s$ for any $s\leq t\in I$, there is the following equivalence between characterisations of $\mu$.
 %\wh{It should be noted that in the following proposition I assume the the solution pair is continuous, I think there is more going on in the cadlag case... this assumption lets me transition from considering a stochastic process vs. a continuous path valued random element. } 
 %\wh{Need to convince myself that the Billingsley arguments work for generic Polish spaces in the following proposition. I.E. equivalence between stochastic process and path valued random element.}
 \begin{proposition}\label{prop equalityapproach}
 	Given a filtered probability space $(\Omega,\cF,\bF,\P)$ equipped with continuous adapted processes $X$, $B$ and $\mu$, valued in $\mathbb R^{d_X}$, $\mathbb R^{d_B}$ and $\cP(\cC)$ respectively, the following are equivalent:
 	\begin{enumerate}[i)]
 		\item For all $ t\in I$, $\mu_t=\mathscr L(X_{\cdot \wedge t}|\cF^{B,\mu}_t)$ and $X$ is compatible with $(B,\mu)$
 		\item For all $t\in I$, $\mu_t=\mathscr L(X_{\cdot \wedge  t }|\cF^{B,\mu}_\infty).$ 
 	\end{enumerate}

 \end{proposition}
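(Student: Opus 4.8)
The plan is to reduce both implications to the tower property of conditional expectation together with the conditional-independence characterisation of immersion recalled in Appendix \ref{sec immcomp}. Two preliminary observations set things up. First, since $X$ is continuous, $\cF^X_t=\sigma(X_{\cdot\wedge t})$ with $X_{\cdot\wedge t}$ regarded as a $\cC$-valued random variable, so every bounded $\cF^X_t$-measurable random variable is of the form $\varphi(X_{\cdot\wedge t})$ for some bounded Borel $\varphi:\cC\to\bR$; moreover $\cC$, and hence $\cP(\cC)$, is Polish, so the relevant regular conditional distributions exist and it suffices to test identities against a fixed countable convergence-determining family of such $\varphi$, which is how the $\P$-null sets will be chosen independently of $\varphi$ and of $t$. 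Second, ``$X$ is compatible with $(B,\mu)$'' — i.e.\ $\bF^{B,\mu}$ immersed in $\bF^X\vee\bF^{B,\mu}$ — is equivalent, by Appendix \ref{sec immcomp}, to: for every $t\in I$, $\cF^X_t\vee\cF^{B,\mu}_t$ and $\cF^{B,\mu}_\infty$ are conditionally independent given $\cF^{B,\mu}_t$; and since appending the $\cF^{B,\mu}_t$-measurable $\sigma$-algebra $\cF^{B,\mu}_t$ does not affect conditional independence given $\cF^{B,\mu}_t$, this is in turn equivalent to: for every $t\in I$ and every bounded Borel $\varphi$, $\E[\varphi(X_{\cdot\wedge t})\mid\cF^{B,\mu}_\infty]=\E[\varphi(X_{\cdot\wedge t})\mid\cF^{B,\mu}_t]$.

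Granting these, both directions become one line each. For $i)\Rightarrow ii)$: fix $t$ and $\varphi$; adaptedness of $\mu$ makes $\langle\mu_t,\varphi\rangle$ measurable for $\cF^{B,\mu}_t\subset\cF^{B,\mu}_\infty$; compatibility in the form above gives $\E[\varphi(X_{\cdot\wedge t})\mid\cF^{B,\mu}_\infty]=\E[\varphi(X_{\cdot\wedge t})\mid\cF^{B,\mu}_t]$, and the right-hand side equals $\langle\mu_t,\varphi\rangle$ by the identity in $i)$; this is exactly $\mu_t=\mathscr L(X_{\cdot\wedge t}\mid\cF^{B,\mu}_\infty)$. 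For $ii)\Rightarrow i)$: fix $t$ and $\varphi$; since $\langle\mu_t,\varphi\rangle$ is $\cF^{B,\mu}_t$-measurable, conditioning the identity $\langle\mu_t,\varphi\rangle=\E[\varphi(X_{\cdot\wedge t})\mid\cF^{B,\mu}_\infty]$ on $\cF^{B,\mu}_t$ and invoking the tower property yields $\langle\mu_t,\varphi\rangle=\E[\varphi(X_{\cdot\wedge t})\mid\cF^{B,\mu}_t]$, which is the first half of $i)$; comparing the two identities then gives $\E[\varphi(X_{\cdot\wedge t})\mid\cF^{B,\mu}_\infty]=\E[\varphi(X_{\cdot\wedge t})\mid\cF^{B,\mu}_t]$ for all $t$ and all bounded Borel $\varphi$, which by the characterisation above is precisely the compatibility of $X$ with $(B,\mu)$, the second half of $i)$.

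The main obstacle is therefore entirely in the first paragraph: selecting the correct equivalent form of compatibility (immersion of $\bF^{B,\mu}$ in $\bF^X\vee\bF^{B,\mu}$) from Appendix \ref{sec immcomp}, and making the measure-theoretic bookkeeping precise — in particular ensuring that the exceptional $\P$-null sets in the regular-conditional-distribution identities can be taken uniform in $\varphi$ and in $t$, so that the ``for all $t\in I$'' statements in $i)$ and $ii)$ are genuinely recovered rather than an ``a.e.\ $t$'' version. No compactness or stochastic-analysis input beyond the elementary facts listed above is needed.
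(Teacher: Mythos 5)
Your proof is correct and follows essentially the same route as the paper's: both arguments reduce the equivalence to the conditional-independence characterisation of immersion from Theorem \ref{thm compat}, the tower property, and the $\cF^{B,\mu}_t$-measurability of $\langle \mu_t,\varphi\rangle$. The only difference is presentational: you invoke the ``equality of conditional expectations given $\cF^{B,\mu}_t$ and $\cF^{B,\mu}_\infty$'' form of compatibility up front, which collapses both implications to one line, whereas the paper verifies the conditional-independence condition directly by testing against products $f(X_{\cdot\wedge t})\,g(B,\mu)\,h(B_{\cdot\wedge t},\mu_{\cdot\wedge t})$ (and, along the way, records the intermediate identity $\mu_s=\mathscr L(X_{\cdot\wedge s}|\cF^{B,\mu}_t)$ for $s\leq t$ used in Remark \ref{rem consequence}).
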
	
 \begin{remark}\label{rem consequence}
 	A consequence of either condition in the above proposition is that for all $s\in I$ and any $t \in I:$ $s\leq t$, $\mu_s=\mathscr L(X_{\cdot\wedge s}|\cF^{B,\mu}_{t})$. This property is proved in the beginning of the second half of the following proof. 
 \end{remark}
 \begin{proof}[Proof of Proposition \ref{prop equalityapproach}]
 	First it is shown that $i)\implies ii)$. Fix $t\in I$ and let $f:\cC \rightarrow \mathbb R$ and  $g:C(I;\mathbb R^{d_B})\times C(I; \cP(\cC))\rightarrow \mathbb R$ % $h:C([0,t];\mathbb R^{d_B})\times C(I;\cP(\cC))\rightarrow \mathbb R$%
 	all be bounded and Borel measurable. Then, 
 	\begin{equation}\notag
 	\begin{split}
 	%\mathbb E [f(X_t)g(B,\mu)h((B,\mu)|_{[0,t]})]\\
 	\mathbb E [f(X_{\cdot\wedge t} )g(B,\mu)] & =  \mathbb E[\mathbb E [f(X_{\cdot\wedge t})g(B,\mu)|\cF^{B,\mu}_t]]\\
 	%compa& tibility\\
 	& = \mathbb E[\mathbb E [f(X_{\cdot\wedge t})|\cF^{B,\mu}_t]\mathbb E [g(B,\mu)|\cF^{B,\mu}_t]]\\
 	%defin& ition:\mu_t=\mathscr L(X_t|\cF^{B,\mu}_t)\\
 	& = \mathbb E[\langle \mu_t,f\rangle \mathbb E [g(B,\mu)|\cF^{B,\mu}_t]]\\
 	%{measurability\, of\, mapping\,\, } \mu_t\mapsto\mu_t(f){\,\, and\,}& { tower\, property \,of\, conditional\, expectation}\\
 	& = \mathbb E[\langle \mu_t,f\rangle g(B,\mu)].\\
 	\end{split}
 	\end{equation}
 	The first equality follows from elementary properties of conditional expectation, the second from compatibility (see \ref{thm compat} condition i), the third from definition of $\mu$ and the fourth from the measurability of the mapping $\mu_t\mapsto\langle \mu_t,f\rangle $ and hence the measurability of $\langle \mu_t,f\rangle$ with respect to the sigma algebra $\cF^{B,\mu}_t$. 
 	
 	Since $f$ and $g$ are arbitrary bounded Borel measurable functions, the above equality holds for indicator functions $\1_F$ and $\1_G$ where $F\in \cB(\cC)$ and $G\in\cB(C(I;\mathbb R^{d_B})\times C(I;\cP(\cC )))$. Noting that $\mu_t$ is $\cF^{B,\mu}_\infty$ measurable, $\mu_t$ satisfies the defining properties of the regular conditional distribution of $X_{\cdot\wedge t}$ given $\cF^{B,\mu}_\infty$. %Since $t$ was arbitrary, the first step of the proof is complete.

 	Now it remains to prove that $ii)\implies i)$. Using the fact that for arbitrary $s\leq t\in I$, $\mu_s$ is $\cF^{B,\mu}_{t}$ measurable for any $s\leq t\in I$, and that for any $E\in \cF^{B,\mu}_{t}$ and $F$ defined as above, 
 	$
 	\E[\1_F(X_{\cdot\wedge s})\1_{E} ]= \E[\mu_s(F)\1_{E} ]$ by definition of $\mu_s$, $\mu_s$ can be identified as a version of the regular conditional distribution of $X_{\cdot\wedge s}$ given $\cF^{B,\mu}_{t}$. I.e. for all $s\in I$ and any $t \in I:$ $s\leq t$, $\mu_s=\mathscr L(X_{\cdot\wedge s}|\cF^{B,\mu}_{t})$.

 	The first claim is immediate. To show compatibility, one needs to demonstrate the conditional independence of $\cF^{X}_t$ from $\cF^{B,\mu}_\infty$ given $\cF^{B,\mu}_t$ (see again \ref{thm compat} condition 1.). For fixed $t\in I$, let $f$ and $g$ be as defined above and another function $h$ be defined the same way as $g$. Then,% and $0\leq s \leq t$, 
 	
 	\begin{equation}\notag
 	\begin{split} 
 	& \mathbb E[\E [f(X_{\cdot\wedge  t}) g(B,\mu)|\cF^{B,\mu}_t] h(B_{\cdot\wedge t},\mu_{\cdot\wedge t})]\\
 	&  = \mathbb E[\E [\mathbb E [f(X_{\cdot\wedge  t})|\cF^{B,\mu}_\infty ] g(B,\mu)|\cF^{B,\mu}_t] h(B_{\cdot\wedge t},\mu_{\cdot\wedge t})]\\
 	%defin& ition:\mu_s=\mathscr L(X_s|\cF^{B,\mu}_\infty)\\
 	%tower\, property\,of & \, conditional\, expectation\\
 	& = \mathbb E[\mathbb E[\langle \mu_t,f\rangle g(B,\mu)|\cF^{B,\mu}_t]h(B_{\cdot\wedge t},\mu_{\cdot\wedge t})]\\
 	%take\,  o & ut \,what \,is \,known\\
 	& = \mathbb E[ \langle \mu_t,f\rangle \mathbb E[g(B,\mu)|\cF^{B,\mu}_t]h(B_{\cdot\wedge t},\mu_{\cdot\wedge t})]\\
 	%by\, eq u &ation\, \eqref{eq 2imp1}\\
 	& = \mathbb E[ \mathbb E [f(X_{\cdot\wedge t})|\cF^{B,\mu}_t ]\mathbb E[g(B,\mu)|\cF^{B,\mu}_t]h(B_{\cdot\wedge t},\mu_{\cdot\wedge t})].\\
 	\end{split}
 	\end{equation}
 	The first and third equalities follow from standard properties of conditional expectation and the second from the definition of $\mu$. Finally, the fourth equality holds due to the observation at the beginning of this part of the proof. The conclusion holds by the uniqueness of conditional expectations.
 \end{proof}
 
 %\begin{remark}
 %Note that the independence of $(W,\xi)$ from $(B,\mu)$ along with the compatibility of $X$ with $(\mu, B,W,\xi)$ yields the compatibility of $X$ with $ {B,\mu}$. This follows from the following lemma:
 
 %\end{remark}
 %\wh{The following lemma doesn't really need writing, but hey, there is is. Don't think I use it anywhere any longer}
 %\begin{lemma}[Transitivity of Compatibility]\label{lem comptrans}
 %Given a probability space $(\Omega,\cF,\P)$ and three filtrations $\bF^i:=(\cF^i_t)_{t\in I}$ $i=1,2,3$ satisfying:
 %\begin{enumerate}
 %\item $\bF^2\subseteq\bF^1\subseteq\bF^2$ i.e. $\forall t\in I$, $\cF^2_t\subseteq \cF^1_t\subseteq\cF^2_t$.
 %\item $\bF^2$ is immersed in $\bF^1$ under $\bP$ and $\bF^1$ is immersed in $\bF^2$ under $\bP$.
 %\end{enumerate}
 %Then, $\bF^2$ is immersed in $\bF^2$.
 %Given a probability space $(\Omega,\cF,\P)$ supporting random processes $X,Y,Z$ with index set $I$. Then, if $Y$ is $\P$-compatible with $Z$ and $X$ is $\P$-compatible with $(Y,Z)$, then $X$ is $\P$-compatible with $Z$.
 %\end{lemma}
 %\begin{proof}
 %Is immediate from the definition of immersion. 
 %\end{proof}
 
 \subsection{Associated SPDE}
 As mentioned in the introduction, assuming further structure of the coefficients, solutions to the McKean-Vlasov SDE with common noise correspond to measure valued solutions of a non-linear SPDE \eqref{eq:SPDE}. The correspondence will be demonstrated in this subsection.

 \begin{definition}[Weak Solution to the SPDE \eqref{eq:SPDE}]
 	A weak solution to the SPDE \eqref{eq:SPDE} is a filtered probability space $(\Omega,\cF,\bF,\P)$ equipped with an $\bF$ Brownian motion $B$ $\bF$ adapted $\cP(\mathbb R^{d_X})$ valued process $\nu$ satisfying the equation \eqref{eq:SPDE}, i.e. $$\langle  \nu_t, \varphi\rangle =\langle \nu_0,\varphi\rangle+ \int_0^t \langle  \nu_s, L\varphi (s,\cdot,\nu_s)\rangle \, ds  +  \int_0^t \langle \nu_s, \partial_x \varphi \rho(s,\cdot,\nu_s) \rangle\,dB_s$$ $\P$-a.s. for all $t\in I$ and for all test functions $\varphi\in C_b^2 (\mathbb R^{d_X})$. 
 	% 	\item $ \mu_0=  \nu $
 	%	\item $\mu_t=\mathscr L(X_t|\cF^{B,\mu}_t)$, $ (W,\xi)\perp(B,\mu)$, $X$ is compatible with $(\mu,B,W,\xi)$, $\mu$ is compatible with $B$.
 	%	\end{enumerate}
 \end{definition}
 \begin{theorem}\label{thm spdecorrespondence} Assume that the coefficients $b$, $\sigma$ and $\rho$ are bounded and \emph{Markovian} in the sense that $(b,\sigma,\rho)(t,x,m)=(b,\sigma,\rho)(t,x_t,m\circ\psi_t^{-1})$ where $\psi_t:\cC\ni x\rightarrow x_t\in\mathbb R^{d_X}$. Then, the existence of a weak solution to the McKean-Vlasov SDE with common noise implies the existence of a weak solution the SPDE \eqref{eq:SPDE}. 
 	%\wh{UNDER APPROPRIATE ASSUMPTIONS ON THE PROCESS AND/OR COEFFICIENTS...}
 \end{theorem}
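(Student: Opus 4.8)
The plan is to start from an arbitrary weak solution in the sense of Definition \ref{def weaksoln}, namely a filtered space $(\Omega,\cF,\bF,\P)$ carrying mutually independent $B,W,\xi$ together with $\bF$-adapted $X,\mu$ satisfying i)--iv), and to produce the SPDE solution on $(\Omega,\cF,\bF^{B,\mu},\P)$ with Brownian motion $B$ and measure process $\nu_t:=\mu_t\circ\psi_t^{-1}$. First I note that $B$ remains a Brownian motion for $\bF^{B,\mu}$ (it is an $\bF$-Brownian motion and $\cF^{B,\mu}_t\subset\cF_t$), that $\nu$ is $\cF^{B,\mu}$-adapted and $\cP(\mathbb R^{d_X})$-valued with $\nu_t=\mathscr L(X_t\mid\cF^{B,\mu}_t)$ (from iii)), and that by the Markovian hypothesis $(b,\sigma,\rho)(s,X_{\cdot\wedge s},\mu_s)=(b,\sigma,\rho)(s,X_s,\nu_s)$, so that It\^o's formula applied to $\varphi(X_t)$ along \eqref{eq:MKVSDECNweak} yields $\varphi(X_t)=\varphi(\xi)+\int_0^t L\varphi(s,X_s,\nu_s)\,ds+\Xi^W_t+\Xi^B_t$ with $\Xi^W_t:=\int_0^t\partial_x\varphi(X_s)\sigma(s,X_s,\nu_s)\,dW_s$ and $\Xi^B_t:=\int_0^t\partial_x\varphi(X_s)\rho(s,X_s,\nu_s)\,dB_s$; boundedness of $b,\sigma,\rho$ and of the derivatives of $\varphi\in C^2_b(\mathbb R^{d_X})$ makes every term bounded in $L^2$. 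I would then take $\E[\,\cdot\mid\cF^{B,\mu}_t]$ of this identity and identify the four resulting terms.

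The two non-martingale terms are handled via Proposition \ref{prop equalityapproach} and Remark \ref{rem consequence}, which give $\mathscr L(X_{\cdot\wedge s}\mid\cF^{B,\mu}_t)=\mu_s$ for every $s\leq t$, hence $\mathscr L(X_s\mid\cF^{B,\mu}_t)=\nu_s$: the left-hand side becomes $\langle\nu_t,\varphi\rangle$; $\E[\varphi(\xi)\mid\cF^{B,\mu}_t]=\langle\nu_0,\varphi\rangle$ since $\nu_0=\mathscr L(\xi\mid\cF^{B,\mu}_t)$; and, by conditional Fubini (licit by boundedness) together with the $\cF^{B,\mu}_t$-measurability of $\nu_s$ for $s\leq t$, $\E[\int_0^t L\varphi(s,X_s,\nu_s)\,ds\mid\cF^{B,\mu}_t]=\int_0^t\langle\nu_s,L\varphi(s,\cdot,\nu_s)\rangle\,ds$. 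What remains, and what I expect to be the main obstacle, is to show $\E[\Xi^W_t\mid\cF^{B,\mu}_t]=0$ and $\E[\Xi^B_t\mid\cF^{B,\mu}_t]=\int_0^t\langle\nu_s,\partial_x\varphi\,\rho(s,\cdot,\nu_s)\rangle\,dB_s$: one is transferring an It\^o integral against $W$ or $B$ through a conditional expectation onto the strictly smaller filtration $\bF^{B,\mu}$, and this is precisely where the independence and compatibility clauses of condition ii) must be used.

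Both stochastic terms I would treat by the same scheme. By the Markovian property the integrands are $\bF^X\vee\bF^{B,\mu}$-progressive and bounded, so they are approximable in $L^2(ds\otimes\P)$ by simple processes $\sum_i\kappa_i\1_{(t_i,t_{i+1}]}$ with each $\kappa_i$ bounded and $\cF^X_{t_i}\vee\cF^{B,\mu}_{t_i}$-measurable; since conditional expectation and It\^o integration are $L^2$-continuous, it suffices to compute on such simple integrands and pass to the limit at the end. For $\Xi^W$: when $t_{i+1}\leq t$ the increment $W_{t_{i+1}}-W_{t_i}$ belongs to $\sigma(W_r-W_{t_i}:t_i\leq r\leq t)$, which by ii) is independent of $\cF^{B,\mu}_t\vee\cF^X_{t_i}$; as $\kappa_i$ is measurable with respect to the latter, $\E[\kappa_i(W_{t_{i+1}}-W_{t_i})\mid\cF^{B,\mu}_t\vee\cF^X_{t_i}]=\kappa_i\E[W_{t_{i+1}}-W_{t_i}]=0$, hence $\E[\kappa_i(W_{t_{i+1}}-W_{t_i})\mid\cF^{B,\mu}_t]=0$, and summing and taking the limit gives $\E[\Xi^W_t\mid\cF^{B,\mu}_t]=0$. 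For $\Xi^B$: the increments $B_{t_{i+1}}-B_{t_i}$ are already $\cF^{B,\mu}_t$-measurable, while the compatibility of $X$ with $(B,\mu)$ makes $\cF^X_{t_i}\vee\cF^{B,\mu}_{t_i}$ conditionally independent of $\cF^{B,\mu}_t$ given $\cF^{B,\mu}_{t_i}$, so $\E[\kappa_i\mid\cF^{B,\mu}_t]=\E[\kappa_i\mid\cF^{B,\mu}_{t_i}]$ and hence $\E[\kappa_i(B_{t_{i+1}}-B_{t_i})\mid\cF^{B,\mu}_t]=\E[\kappa_i\mid\cF^{B,\mu}_{t_i}](B_{t_{i+1}}-B_{t_i})$; summing and passing to the limit yields $\E[\Xi^B_t\mid\cF^{B,\mu}_t]=\int_0^t\E[\partial_x\varphi(X_s)\rho(s,X_s,\nu_s)\mid\cF^{B,\mu}_s]\,dB_s$, and since $\nu_s$ is $\cF^{B,\mu}_s$-measurable and $\mathscr L(X_s\mid\cF^{B,\mu}_s)=\nu_s$ the inner conditional expectation is $\langle\nu_s,\partial_x\varphi\,\rho(s,\cdot,\nu_s)\rangle$. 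Assembling the four identities gives \eqref{eq:SPDE} for each fixed $t$ $\P$-a.s.; both sides being continuous in $t$ (using continuity of $\nu$), the exceptional set can be taken independent of $t$, and since $\varphi\in C^2_b(\mathbb R^{d_X})$ was arbitrary, $(\Omega,\cF,\bF^{B,\mu},\P,B,\nu)$ is a weak solution of \eqref{eq:SPDE}.
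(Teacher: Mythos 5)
Your proposal is correct and follows essentially the same route as the paper: It\^o's formula, conditioning on $\cF^{B,\mu}_t$, and an interchange of conditional expectation with the $dW$ and $dB$ integrals. The only difference is that the paper outsources that interchange to its Appendix Lemma \ref{thm stochfub} (the conditional stochastic Fubini theorem, with $\bF^1=\bF^{B,\mu}$, $\bF^2=\bF^{X,B,\mu}$), whose proof is precisely your simple-predictable-process argument exploiting the immersion and independence clauses of condition ii); you have simply re-derived it inline.
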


 \begin{proof}[Proof of Theorem \ref{thm spdecorrespondence}]
 	First, for any $\varphi\in C_0^\infty(\mathbb R^{d_X})$, apply \Ito's formula for $\varphi(X_t)$:
 	\begin{equation}
 	\notag
 	\begin{split}
 	\varphi(X_t) = & \, \varphi(X_0)+\int_0^t L\varphi(s,X_{ s},\nu_s)\,ds \\
 	& + \int_0^t\partial_x \varphi(X_{s})\sigma (s,X_{ s},\nu_s)\,dW_s + \int_0^t\partial_x \varphi(X_{s})\rho (s,X_{s},\nu_s)\,dB_s\\
 	\end{split}
 	\end{equation}
 	where $\nu_s:=\mu_s\circ\psi_s^{-1}=\mathscr L(X_{s}|\cF^{B,\mu}_s)$. Next, apply the conditional expectation with respect to $\cF^{B,\mu}_t$ on both sides of the above equality:
 	
 	\begin{equation}
 	\notag
 	\begin{split}
 	\mathbb E[\varphi(X_t)|\cF^{B,\mu}_t] = & \, \E[ \varphi(X_0)| \cF^{B,\mu}_t] +\E\bigg [ \int_0^t L\varphi(s,X_{s},\nu_s)\,ds \bigg |\cF^{B,\mu}_t\bigg ] \\
 	& + \E\bigg [\int_0^t\partial_x \varphi(X_{s})\sigma (s,X_{s},\nu_s)\,dW_s  \bigg |\cF^{B,\mu}_t\bigg ]  + \E \bigg [\int_0^t\partial_x \varphi(X_{s}))\rho (s,X_{s},\nu_s)\,dB_s  \bigg |\cF^{B,\mu}_t\bigg ] \\
 	\end{split}
 	\end{equation}
 	Since $\varphi$ has continuous compactly supported derivatives, and the coefficients $b,\sigma,\rho$ are bounded, the integrands in the above expression are bounded and predictable. Therefore, one can apply the stochastic Fubini's theorem \ref{thm stochfub} to the above stochastic integrals, identifying $\bF^1$ as $\bF^{B,\mu}$, $\bF^2$ as $\bF^{X,B,\mu}$, and $\bF^3$ as $\bF$. 
 	\begin{equation}
 	\notag
 	\begin{split}
 	\langle  \nu_t, \varphi\rangle  = & \langle \nu_0,\varphi\rangle  + \int_0^t \E [L\varphi(s,X_{s},\nu_s)  |\cF^{B,\mu}_s  ]\,ds  +  \int_0^t\E [ \partial_x \varphi(X_{s})\rho (s,X_{s},\nu_s)  | \cF^{B,\mu}_s ] \,dB_s\\
 	 = &  \langle \nu_0,\varphi\rangle+\int_0^t\langle \nu_s,L\varphi(s,\cdot,\nu_s)\rangle ds + \int_0^t\langle \nu_s,\partial_x\varphi\rho(s,\cdot,\nu_s)\rangle dB_s.
 	\end{split}
 	\end{equation}
 	%$$+ \int_0^t \langle  \mu_s, L\varphi (s,\cdot,\mu_s)\rangle \, ds  +  \int_0^t \langle \mu_s, \partial_x \varphi(\cdot) \rho(s,\cdot,\mu_s) \rangle\,dB_s$$
 	%Remaining steps to be taken (to be written once conditions have been formulated for existence - I need access to a localisation sequence, unless I assume that $X$ lives on a compact set.) 
 	%\begin{enumerate}
 	%	\item Apply localisation sequence $\tau_n$
 	%	\item Take the conditional expectation with respect to $\cF^{B,\mu}_t$
 	%	\item Apply the Stochastic Fubini's Lemma \ref{lem stochfub}
 	%	\item removed localisation sequence
 	%	\item conclude via definition of $\mu$.
 	%\end{enumerate}
 \end{proof}  
 
 \begin{definition}
 	A strong solution to the SPDE \eqref{eq:SPDE} is an $\bF^B$-adapted weak solution.
 \end{definition}
 \begin{remark}\label{rem weakstrong} If one can conclude that the flow of measures $\mu$ of a weak solution to the McKean-Vlasov SDE with common noise yields a strong solution to the SPDE, then one has a weak-strong solution of the type of Definition \ref{def weakstrongmu}. This fact is exploited in \cite{gess2019}, where Coghi and Gess establish a well-posedness result for \eqref{eq:SPDE}.  
 \end{remark}
 %\wh{Unless I manage to do something with freezing, then I should just say that Gess has done something along these lines}

 \section{Weak Existence}

 \subsection{Assumptions}
 % Plan for the section:
 % \begin{enumerate}
 % 	\item Stability  comp   under weak limits. 
 % 	\item Construct everything carefully on the new Skorokhod Representative space taking particular care of filtrations. 
 % 	\item Identify that the new measure flows are what is expected - draw attention to why the limiting measure flow is of the 'relaxed' form. 
 % 	\item Apply Convergence lemmas.
 % \end{enumerate}
 
 \begin{assumption}[Coefficients]
 	\label{ass coefficients}
 	Functions $b:I\times \cC \times \mathcal{P}(\cC) \to \mathbb{R}^d$, 
 	$\sigma :I\times \cC \times \mathcal{P}(\cC)\to \mathbb{R}^d\times \mathbb{R}^{d_W}$ and $\rho:I\times\cC\times\cP(\cC)\rightarrow \mathbb R^{d_X}\times \mathbb R^{d_B}$
 	are \emph{progressive} (i.e. for any $t\in I$, $(b,\sigma,\rho)(t,x,m)=(b,\sigma,\rho)(t,x_{\cdot\wedge t},m\circ\phi_t^{-1})$, where $\phi_t:\cC\ni x\mapsto x_{\cdot\wedge t}\in \cC$), bounded and jointly continuous in the last two arguments in the following sense:
 	if $(x_n \rightarrow x, m_n \overset{w}{\rightarrow} m )$ as $n\to \infty$
 	then $(b,\sigma,\rho) (t,x_n,m_n) \to (b,\sigma,\rho)(t,x,m)$ as $n\to \infty$. 
 \end{assumption}
 %\wh{Will need to tighten up the assumption on the initial condition seems as we have a p' appearing later if one is using the intensity lemma.}
 \begin{assumption}[Initial Condition]\label{ass initial}
 	For fixed $p \in (2,\infty]$, $|\xi|_{L_{p}}<\infty$. 
 	
 	%But, as far as I recall, we will require for the use of Lacker's result on compatibility, that $X_0$ is almost surely constant.
 \end{assumption}

 \color{black}

 \begin{definition}[Euler-type Approximation Scheme]
 	Let $t^n_i:=\frac{i}{n}$ for $i,n\in \bN$, and define $\kappa_n(t):=t^n_i$ for $t\in[t^n_i,t^n_{i+1})$. The sequence of Euler approximations $X^n$, are defined as strong solutions to the following distribution dependent SDEs constructed on a probability space supporting $W,$ $B$ and $\xi$. For all $n\in\bN$, each $X^n$ satisfies $\P$-a.s. for all $t\in I$, 
 	\begin{equation}\label{eq:eulerscheme}
 	\begin{alignedat}{1}
 	X^n_t & = \xi  +\int_{0}^{t}b(s,X^n_{\cdot\wedge \k_n(s)},\mathscr{L}(X^n_{\cdot\wedge\k_n(s)}|\mathcal{F}^B_{\k_n(s)}))\, ds +\int_0^t \sigma(s,X^n_{\cdot\wedge\k_n(s)},\mathscr{L}(X^n_{\cdot\wedge\k_n(s)}|\mathcal{F}^B_{\k_n(s)}))\, dW_s\\
 	&  +\int_{0}^{t} \rho(s,X^n_{\cdot\wedge\k_n(s)},\mathscr{L}(X^n_{\cdot\wedge\k_n(s)}|\mathcal{F}^B_{\k_n(s)}))\,dB_s,\,\,\, \\
 	%X^n_0 & =\xi. 
 	\end{alignedat}
 	\end{equation}
 	Such solutions exist and can be constructed directly from the triple $(\xi,B,W).$ Since for any $s\in I$ $X^n_{\cdot\wedge\kappa_n(s)}$ is $\cF^{B,W,\xi}_{\kappa_n(s)}$ measurable, $\mathscr L(X^n_{\cdot\wedge\kappa_n(s)}|\cF^B_{\kappa_n(s)})=\mathscr L(X^n_{\cdot\wedge\kappa_n(s)}|\cF^B_{s})=\mathscr L(X^n_{\cdot\wedge\kappa_n(s)}|\cF^B_{\infty})$.
 \end{definition}
 
 \subsection{Auxiliary Lemmas}
 
 \begin{lemma}[A Priori Estimates]\label{lem apriori}
 	Let Assumptions \ref{ass coefficients} and \ref{ass initial} hold. If $\{X^n\}_{n\in \mathbb N}$ is a (the) sequence of continuous stochastic processes satisfying \eqref{eq:eulerscheme}. Then for any $1\leq q \leq p$ and $T<\infty$,
 	%\wh{Again, will have to make $p$ match up and make sense or should work with GAUGES if using Lyapunov criteria. i.e. Gartner's topology}
 	\[
 	\sup_n\E \bigg [\sup_{0\leq t \leq T}|X^n_t|^q\bigg ]<\infty.
 	\]
 	For any $q\geq1 $ and $s,t\in I$ such that $|t-s|\leq 1$,
 	%\begin{equation}\notag
 	%\mathbb E [ |X^n_t-X^n_s|^p]\leq C_{p}(t-s)^{\frac{p}{2}}.
 	%\end{equation}
 	%Moreover,
 	\begin{equation}
 	\mathbb E \bigg [\sup_{s\leq u\leq t} |X^n_u-X^n_s|^{q}\bigg ]\leq c_{q}(t-s)^{\frac{q}{2}}.
 	\end{equation}
 	%
 	%Furthermore, for any $\varepsilon>0$ and $T>0$ one has 
 	%\[
 	%\lim_{h\searrow 0}\sup_{n}\P[\max_{0\leq s\leq t\leq T:|t-s|\leq h}|X^n_t-X^n_s|>\varepsilon]=0
 	%\]
 \end{lemma}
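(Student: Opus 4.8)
The plan is entirely classical and exploits one feature of the hypotheses in a crucial way: the coefficients $b,\sigma,\rho$ are uniformly bounded, say $|b|+|\sigma|+|\rho|\le K$ on $I\times\cC\times\cP(\cC)$ for some constant $K$. In particular the $\cP(\cC)$-valued argument $\mathscr L(X^n_{\cdot\wedge\k_n(s)}|\cF^B_{\k_n(s)})$ plays no role in the estimates — we only ever use $|b(s,\cdot,\cdot)|,|\sigma(s,\cdot,\cdot)|,|\rho(s,\cdot,\cdot)|\le K$ — so no Gr\"onwall-type argument is needed and the two assertions decouple completely. The starting point for both is the identity, valid for $0\le s\le u$,
\[
X^n_u-X^n_s=\int_s^u b\big(r,X^n_{\cdot\wedge\k_n(r)},\nu^n_r\big)\,dr+\int_s^u\sigma\big(r,X^n_{\cdot\wedge\k_n(r)},\nu^n_r\big)\,dW_r+\int_s^u\rho\big(r,X^n_{\cdot\wedge\k_n(r)},\nu^n_r\big)\,dB_r,
\]
where $\nu^n_r:=\mathscr L(X^n_{\cdot\wedge\k_n(r)}|\cF^B_{\k_n(r)})$, together with $X^n_0=\xi$ when $s=0$.

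For the first estimate, fix $T<\infty$ and $1\le q\le p$. Taking $\sup_{0\le t\le T}$ in $X^n_t=\xi+\int_0^t b\,dr+\int_0^t\sigma\,dW_r+\int_0^t\rho\,dB_r$, applying the elementary inequality $(a_1+a_2+a_3+a_4)^q\le 4^{q-1}(a_1^q+\cdots+a_4^q)$ (valid for $q\ge1$) and taking expectations leaves four terms. The first is $\E[|\xi|^q]\le|\xi|_{L_p}^q<\infty$ by Assumption \ref{ass initial}, using $q\le p$. The drift term is bounded pathwise by $(KT)^q$. For the two stochastic integrals the Burkholder--Davis--Gundy inequality gives $\E[\sup_{t\le T}|\int_0^t\sigma\,dW_r|^q]\le C_q\,\E[(\int_0^T|\sigma|^2\,dr)^{q/2}]\le C_q(K^2T)^{q/2}$, and likewise for the $B$-integral. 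Summing, the bound depends only on $q$, $T$, $K$ and $|\xi|_{L_p}$, hence not on $n$; this is the claim. (If $p=\infty$ then $\xi$ is bounded and the same works for every finite $q$.)

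For the second estimate, fix $q\ge1$ and $s\le t$ with $t-s\le1$. Taking $\sup_{s\le u\le t}$ in the displayed identity for $X^n_u-X^n_s$, using $(a_1+a_2+a_3)^q\le 3^{q-1}(a_1^q+a_2^q+a_3^q)$ and taking expectations, the drift contributes at most $K^q(t-s)^q\le K^q(t-s)^{q/2}$ (here $t-s\le1$ and $q\ge1$ are both used), while Burkholder--Davis--Gundy bounds the $W$- and $B$-contributions by $C_qK^q(t-s)^{q/2}$ each, exactly as above. This yields $\E[\sup_{s\le u\le t}|X^n_u-X^n_s|^q]\le c_q(t-s)^{q/2}$ with $c_q$ depending only on $q$ and $K$, uniformly in $n$.

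I do not foresee a genuine obstacle. The only points deserving a word are: (i) that every constant above is independent of $n$, which is automatic since the bound $K$ on the coefficients is; (ii) that the Euler processes $X^n$ are bona fide continuous semimartingales to which It\^o calculus and Burkholder--Davis--Gundy apply — but this is part of the hypothesis, the $X^n$ being built piecewise on $[t^n_i,t^n_{i+1})$ from $(\xi,B,W)$ with the frozen conditional law on each such interval being $\cF^B_{t^n_i}$-measurable; and (iii) the harmless restriction $t-s\le1$, imposed precisely so that the drift term $(t-s)^q$ can be absorbed into $(t-s)^{q/2}$ without a separate case analysis.
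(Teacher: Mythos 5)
Your proof is correct and is exactly the standard argument the paper invokes by citing the proof of Theorem 21.9 in Kallenberg: the paper gives no details itself, and your decomposition into drift plus stochastic integrals, the use of the uniform bound $K$ on the coefficients (so the measure argument is irrelevant), Jensen/Lyapunov for $\E[|\xi|^q]\le|\xi|_{L_p}^q$, and Burkholder--Davis--Gundy with $t-s\le 1$ absorbing $(t-s)^q$ into $(t-s)^{q/2}$ is precisely that classical route. Nothing further is needed.
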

 %\wh{Should change the reference to Kallenberg}
 \begin{proof}
 	Is standard in the literature. See, for example, the proof of Theorem 21.9 in \cite{KallenbergBook2002}. %%18.9 in first edition
 \end{proof}
 These estimate allow one to conclude tightness of the family $\{X^n\}_{n\in\mathbb N}$ by application of the Arzel\`a Ascoli characterisation of compact sets (see for example, problem 2.4.11 Karatzas and Shreve \cite{karatzas2012brownian}) and prove that the family of flows of conditional measures constructed for the Euler approximations have continuous versions that induce a tight family of probability measures in $\cP(C(I;\cP_p(\cC )))$.

 \subsection{Existence Theorem}
 
 \begin{theorem}[Existence of a Weak Solution to McKean-Vlasov SDE with Common Noise]\label{thm wkex}
 	Let Assumptions \ref{ass coefficients} and \ref{ass initial} hold. Then there exists a weak solution to the McKean-Vlasov SDE with common noise. 	
 \end{theorem}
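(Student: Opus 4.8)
The plan is a compactness argument: one passes to the limit along the Euler approximations $\{X^n\}$ of \eqref{eq:eulerscheme}, carried along with their conditional measure flows and the stochastic inputs, via a Skorokhod representation, and then identifies the limit as a weak solution in the sense of Definition \ref{def weaksoln}. The guiding principle, stressed in the introduction, is to keep the entire dependence structure inside the tuple that is sent to the limit, so that the compatibility relations and the conditional-law identity survive.

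\textit{Tightness and Skorokhod representation.} For each $n$ put $\mu^n_t := \mathscr{L}(X^n_{\cdot\wedge t}|\mathcal{F}^B_t)$; then $\mu^n$ is $\mathbb{F}^B$-adapted, $\mathcal{F}^{B,\mu^n}_t=\mathcal{F}^B_t$, and the scheme \eqref{eq:eulerscheme} reads with integrand $(b,\sigma,\rho)(s,X^n_{\cdot\wedge\kappa_n(s)},\mu^n_{\kappa_n(s)})$. Lemma \ref{lem apriori} gives uniform moment bounds and a Kolmogorov continuity estimate for $X^n$; these furnish a continuous $\mathcal{P}_q(\mathcal{C})$-valued modification of $\mu^n$ (for a suitable $q>2$, which exists since $p>2$) and, via the Arzel\`a--Ascoli criterion, tightness of the laws of the tuples $\Theta^n := (X^n,\mu^n,B,W,\xi)$ on the Polish space $\mathcal{E} := C(I;\mathbb{R}^{d_X}) \times C(I;\mathcal{P}_q(\mathcal{C})) \times C(I;\mathbb{R}^{d_B}) \times C(I;\mathbb{R}^{d_W}) \times \mathbb{R}^{d_X}$. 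Passing to a weakly convergent subsequence and applying a form of the Skorokhod representation theorem that keeps the joint law of the inputs $(B,W,\xi)$ fixed, one obtains on a new space $(\bar\Omega,\bar{\mathcal{F}},\bar{\mathbb{P}})$ copies $\bar\Theta^n = (\bar X^n,\bar\mu^n,\bar B,\bar W,\bar\xi)$ with $\bar\Theta^n \to \bar\Theta = (\bar X,\bar\mu,\bar B,\bar W,\bar\xi)$ $\bar{\mathbb{P}}$-a.s. in $\mathcal{E}$; let $\bar{\mathbb{F}}$ be the augmented filtration generated by $\bar\Theta$. (Adding the martingale part of \eqref{eq:eulerscheme} to the tuple simplifies the limit passage below.)

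\textit{Identification of the limit data.} Since the law of $(\bar B,\bar W,\bar\xi)$ is that of $(B,W,\xi)$, these are mutually independent Brownian motions and a copy of $\xi$. Each $X^n$ is a strong solution built from $(\xi,B,W)$, so every $\bar\Theta^n$ satisfies the full slate of compatibility and independence relations in Definition \ref{def weaksoln} ii); as these are stable under weak convergence when the input marginal is held fixed (\cite{lackerdensesets}), they pass to $\bar\Theta$, so $\bar B,\bar W$ remain $\bar{\mathbb{F}}$-Brownian motions, $\bar X$ is compatible with $(\bar B,\bar\mu)$, $(\bar X,\bar\mu)$ is compatible with $(\bar B,\bar W,\bar\xi)$, and the required conditional independence of future $\bar W$-increments holds. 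For the conditional-law identity (Definition \ref{def weaksoln} iii)), note that $\mu^n_t = \mathscr{L}(X^n_{\cdot\wedge t}|\mathcal{F}^{B,\mu^n}_\infty)$, equivalently $\bar{\mathbb{E}}[f(\bar X^n_{\cdot\wedge t})\,g(\bar B,\bar\mu^n)] = \bar{\mathbb{E}}[\langle\bar\mu^n_t,f\rangle\,g(\bar B,\bar\mu^n)]$ for all bounded continuous $f,g$; letting $n\to\infty$ with a.s. convergence and boundedness gives $\bar{\mathbb{E}}[f(\bar X_{\cdot\wedge t})\,g(\bar B,\bar\mu)] = \bar{\mathbb{E}}[\langle\bar\mu_t,f\rangle\,g(\bar B,\bar\mu)]$, hence $\bar\mu_t = \mathscr{L}(\bar X_{\cdot\wedge t}|\mathcal{F}^{\bar B,\bar\mu}_\infty)$; with the compatibility of $\bar X$ and $(\bar B,\bar\mu)$ already in hand, Proposition \ref{prop equalityapproach} upgrades this to $\bar\mu_t = \mathscr{L}(\bar X_{\cdot\wedge t}|\mathcal{F}^{\bar B,\bar\mu}_t)$ for every $t\in I$.

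\textit{Passage to the limit in the dynamics.} By joint continuity and boundedness of the coefficients (Assumption \ref{ass coefficients}), together with $\kappa_n(s)\to s$, the estimate $\bar{\mathbb{E}}[\sup_{\kappa_n(s)\le u\le s}|\bar X^n_u-\bar X^n_{\kappa_n(s)}|^q]\to 0$ from Lemma \ref{lem apriori}, and $\bar\mu^n\to\bar\mu$ in $C(I;\mathcal{P}_q(\mathcal{C}))$, the coefficients evaluated along the scheme converge and the drift integral converges pathwise by dominated convergence; for the two stochastic integrals one argues either by $L^2$-convergence (the integrators $\bar B,\bar W$ being held fixed and the integrands bounded and a.e.-convergent) or by a martingale-problem argument (the canonical martingale and its bracket are $\bar{\mathbb{F}}$-martingales in the limit because they are along the scheme and boundedness gives uniform integrability), and then the $\bar{\mathbb{F}}$-Brownian and conditional-independence properties of $\bar B,\bar W$ identify the limit as $\int_0^\cdot\sigma(s,\bar X_{\cdot\wedge s},\bar\mu_s)\,d\bar W_s + \int_0^\cdot\rho(s,\bar X_{\cdot\wedge s},\bar\mu_s)\,d\bar B_s$, so $\bar X$ satisfies \eqref{eq:MKVSDECNweak}; condition i) is inherited from the uniform moment bounds. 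Thus $(\bar\Omega,\bar{\mathcal{F}},\bar{\mathbb{F}},\bar{\mathbb{P}},\bar B,\bar W,\bar\xi,\bar X,\bar\mu)$ is a weak solution. I expect the principal obstacle to be the second step: measurability is \emph{not} preserved under weak limits, so identifying $\bar\mu$ as the conditional law and establishing the compatibility relations is delicate, and it is precisely here that Proposition \ref{prop equalityapproach} (conditioning on the whole path of $(\bar B,\bar\mu)$) and the input-preserving Skorokhod representation are indispensable; a secondary difficulty is the convergence of the stochastic integrals in the possibly degenerate setting, where one cannot invert the diffusion coefficients and must work at the level of martingale problems.
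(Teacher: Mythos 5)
Your proposal is correct and follows essentially the same route as the paper's proof: Euler approximations carried as a full tuple $(X^n,\mu^n,B,W)$, Kolmogorov-continuity estimates in $W_p$ to get continuous $\cP_p(\cC)$-valued modifications of the measure flows, tightness and Skorokhod representation, transfer of the compatibility relations and of the identity $\mu_t=\mathscr L(X_{\cdot\wedge t}|\cF^{B,\mu}_\infty)$ to the limit via Lemma 2.1 of \cite{lackerdensesets} and Proposition \ref{prop equalityapproach}, and finally dominated convergence plus Skorokhod's lemma for the convergence of the stochastic integrals (the paper uses the latter where you offer the $L^2$/martingale-problem alternative, a minor technical variation). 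The only caveat is that the standard Skorokhod representation does not literally hold $(B,W,\xi)$ fixed across $n$ — the copies $\tilde B^{n_k},\tilde W^{n_k}$ vary and converge a.s. — but this does not affect the argument since the joint laws are preserved and the integrator convergence is exactly what Skorokhod's lemma accommodates.
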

 % \wh{Do I need to complete my filtrations?}
 \begin{proof}
There exists a filtered probability space $(\Omega,\cF,\bF,\P)$ satisfying the usual conditions, equipped with mutually independent $\bF$ Brownian motions $B$ and $W$ and initial condition $\xi$. Construct the sequence of approximations $X^n$ satisfying the Euler approximation SDE \eqref{eq:eulerscheme}. This construction is carried out iteratively, applying Lemma \ref{lem cndp} on every interval of the approximation (of length $1/n$ for the $n^{th}$ approximation) to ensure that the conditional distributions are valued in $\cP_p(\cC)$. Note that the processes $X^n$ are continuous by construction and are compatible with $(B,W,\xi)$.
 	%\wh{CHECK: this is how I want to be defining compatibility.}
 	It will now be demonstrated that the flow of measures $(\mathscr L(X^n_{\cdot\wedge \kappa_n(t)}|\cF^B_{\kappa_n(t)}))_{t\geq 0}$ have continuous $\cP_p(\cC)$ valued versions by % These versions will not satisfy the measurability criterion of regular conditional distributions since the filtration is not complete. However, this will not be problematic, as the condition of measurability is relaxed in the definition of weak solutions. 
 	%
 	%In order to conclude the existence of a continuous modification to each flow of measures $\{\mathscr L(X^n_t|\cF^B_t)\}_{t\in I}$ ,
 	verifying the conditions of Theorem \ref{thm kcont}. The following holds for any $s,t\in I$ such that $|t-s|\leq 1$: 
 	
 	\begin{equation}\label{eq eulerwest}
 	\begin{split}
 \mathbb E [ W_p(\mathscr L(X^n_{\cdot\wedge \kappa_n(t)}|\cF^B_{\kappa_n(t)}), \mathscr L(X^n_{\cdot\wedge \kappa_n(s)}|\cF^B_{\kappa_n(t)}))^{p}]  & =   \mathbb E [ W_p(\mathscr L(X^n_{\cdot\wedge t}|\cF^{B}_\infty), \mathscr L(X^n_{\cdot\wedge s}|\cF^B_\infty))^{p}]\\
 	& \leq  \mathbb E [ \mathbb E[\sup_{s\leq u\leq t}|X^n_u-X^n_s|^p|\cF^B_\infty ]]\\
 	& \leq \mathbb E [\sup_{s\leq u\leq t}|X^n_u-X^n_s|^{p}]\\
 	& \leq c_{T,p}(t-s)^{\frac{p}{2}}\\
 	\end{split}
 	\end{equation}
 	The equality follows from Proposition \ref{prop equalityapproach} and the inequalities follow consecutively from the definition of $W_p$, Jensen's inequality, properties of conditional expectation and Lemma \ref{lem apriori}.
 	%\wh{Clean up version of Kolmogorov Continuity..} 
 	Since $p>2$, there is a continuous modification (labelled $\mu^n$) of each flow of measures via Theorem \ref{thm kcont}. Moreover, by viewing $\xi$ as the constant process $\{\Xi_t:=\xi\}_{t\in I}$, see that $\mathscr L(X^n_{\cdot\wedge 0}|\cF^B_0)=\mathscr L(X^n_{\cdot\wedge 0})=\mathscr L(\Xi)$ is tight in $\cP_p(\cC)$ as a Dirac mass and since the estimate \eqref{eq eulerwest} is uniform in $n$, the family of continuous modifications of the flows $\mu^n$ is tight in $C(I,\cP_p(\mathbb R^{d_X}))$ by application of Theorem \ref{thm kcontn}.  
 	
 	The family of joint distributions $\mathscr L((X^n,\mu^n,B,W))=:\eta^n$ consequently defines a tight family of measures on $\cC\times C(I;\cP_p(\cC))\times C(I;\mathbb R^{d_B})\times C(I;\mathbb R^{d_W})$.
 	% \wh{Should I use the product space or $C(I;\mathbb R^{d_X}\times \cP_p(\mathbb R^{d_X})\times \mathbb R^{d_B}\times \mathbb R^{d_W}\times \mathbb R^{d_X}))$ (extending $\xi$ to a constant process). }
 	By application of Prokhorov's theorem there is a subsequence $\{n_k\}_k $ and a probability measure $\eta$, such that $\eta^{n_k}\overset{w}{\rightarrow}\eta $.
 	
 	Skorokhod's Representation theorem gives the existence of a probability space $(\tilde{\Omega},\tilde{F},\tilde{P})$ on which are defined random elements $\{\tilde Z^{n_k}\}_k$ and $\tilde Z$, valued on the above product space such that $$\tilde Z^{n_k}\equiv(\tilde X^{n_k},\tilde \mu^{n_k},\tilde{B}^{n_k},\tilde{W}^{n_k})\sim \eta^{n_k},\,\,\, \tilde Z\equiv(\tilde{X},\tilde{\mu},\tilde B,\tilde W)\sim \eta\,\,\,$$ 
 	$$\text{and} \,\,\tilde Z^{n_k}\rightarrow \tilde Z\,\,\,\, \tilde{\omega}\text{-surely}.$$ 
 	It is useful to note that independence/compatibility of one random element/process with respect to another is a property of the joint distribution. This fact will be used to verify a few properties of the  constructed processes. Let the filtration $\tilde \bF$ be defined as $\tilde \cF_t:=\sigma(\tilde X_s,\tilde \mu_s,\tilde B_s,\tilde W_s:s\leq t)$. The adaptedness of the $X$ and $\mu$ with respect to this filtration is immediate from the definition. That $\tilde B$ and $\tilde W$ are $\tilde \bF$ Brownian motions will follow from the immersion of their natural filtrations in the filtration $\tilde \bF$ and this will be verified later in the proof.
 	
 	The proof will be concluded once the components of $\tilde Z$, $(\tilde{X},\tilde{\mu},\tilde B,\tilde W)$ have be shown to satisfy items i) to iv) of Definition \ref{def weaksoln} with $\tilde \xi:=\tilde X_0$. Item 1 follows from the boundedness of $b$, $\sigma$ and $\rho$. 
 	
 	%\wh{I Could not figure out how to prove that '''$\tilde \mu_t=\mathscr L(\tilde X_t|\tilde \cF^{\tilde B,\tilde \mu}_t)$ for any $t\in I$ and $\tilde X$ is compatible with $(\tilde B,\tilde \mu)$'''   without using proposition 1.5, it seemed that the compatibility of $X$ with $B,\mu)$ was hard to prove. }
 	For the second item, it is easily checked that $ \sigma(\tilde W_r-\tilde W_s:s\leq r\leq t)\indep \cF^{\tilde B,\tilde \mu}_t\vee\cF^{\tilde X}_s$ (see \cite{billingsley} Theorem 2.8).  To show that $(\tilde X,\tilde \mu)$ is compatible with $(\tilde B,\tilde W,\tilde \xi)$, one needs to demonstrate the conditional independence of $\tilde \cF^{\tilde X,\tilde \mu}_t$ from $\tilde \cF^{\tilde B,\tilde W,\tilde \xi}_\infty$ given $\tilde \cF^{\tilde B,\tilde W,\tilde \xi }_t$. Let $f:C([0,t];\mathbb R^{d_X}\times\cP_p(\cC))\rightarrow \mathbb R$ continuous and bounded, $g:C(I;\mathbb R^{d_B}\times \mathbb R^{d_W})\times \mathbb R^{d_X}\rightarrow \mathbb R$ and $h:C([0,t];\mathbb R^{d_B}\times\mathbb R^{d_W})\times \mathbb R^{d_X}\rightarrow \mathbb R$ measurable and bounded. Let $X|_{[0,t]}$ denote the truncation of a process on $I$ to its realisation on $[0,t]$. By application of Lemma 2.1 from \cite{lackerdensesets},
 	\begin{equation}\notag
 	\begin{split}
 	%\mathbb E [f(X_t)g(B,\mu)h((B,\mu)|_{[0,t]})]\\
 	& \tilde \bE [f((\tilde X,\tilde \mu)|_{[0,t]})(g(\tilde B,\tilde W,\tilde \xi)-\tilde \bE [ g(\tilde B,\tilde W,\tilde \xi)|\cF^{\tilde B,\tilde W,\tilde \xi}_t])h((\tilde B,\tilde W)|_{[0,t]},\tilde \xi)]\\
 	= & \lim_{k\rightarrow \infty} \tilde \bE \bigg [ f((\tilde X^{n_k},\tilde \mu^{n_k})|_{[0,t]})  \\
 	&  \hspace{12mm}\times\bigg (g(\tilde B^{n_k},\tilde W^{n_k},\tilde \xi^{n_k})-\tilde \bE [ g(\tilde B^{n_k},\tilde W^{n_k},\tilde \xi^{n_k})|\cF^{\tilde B^{n_k},\tilde W^{n_k},\tilde \xi^{n_k}}_t]\bigg ) \\
 	&  \hspace{12mm}\times h((\tilde B^{n_k},\tilde W^{n_k})|_{[0,t]},\tilde \xi^{n_k})\bigg ]   \\
 	= & \lim_{k\rightarrow \infty}  \bE \bigg [f( (X^{n_k} , \mu^{n_k})|_{[0,t]})\\
 	&  \hspace{12mm}\times\bigg (g( B^{}, W^{}, \xi)- \bE [ g( B^{}, W^{}, \xi)|\cF^{ B^{}, W^{}, \xi^{}}_t]\bigg )h(( B^{}, W^{})|_{[0,t]}, \xi^{})\bigg ]  \\
 	= & \, 0.\\
 	\end{split}
 	\end{equation}
 	The final equality holds since $\mu^{n_k}$ is a modification of a $\bF^B$ adapted process on the space $(\Omega,\cF,\P)$ and $X^{n_k}$ is a strong solution to the Euler scheme.
 	
 	To see how to apply Lemma 2.1 from \cite{lackerdensesets}, notice that $\tilde \E[g(\tilde B,\tilde W,\tilde \xi)|\cF^{\tilde B,\tilde W,\tilde \xi}_t]$ is by definition $\cF^{\tilde B,\tilde W,\tilde \xi}_t$ measurable and therefore by the Doob-Dynkin lemma (Lemma \ref{lem doobdynkin}) there exists a measurable function $G:C([0,t];\mathbb R^{d_B}\times \mathbb R^{d_W})\times \mathbb R^{d_X}\rightarrow \bR $ such that $G((\tilde B,\tilde W)|_{[0,t]},\tilde \xi)=\tilde \E[g(\tilde B,\tilde W,\tilde \xi)|\cF^{\tilde B,\tilde W,\tilde \xi}_t]$. Since, $(\tilde B,\tilde W,\tilde \xi)$ has the same distribution as $(\tilde B^{n_k},\tilde W^{n_k},\tilde \xi^{n_k})$,
 	\begin{equation}\notag
 	\begin{split}
 	\tilde \E[\tilde \bE [&  g(\tilde B^{n_k},\tilde W^{n_k},\tilde \xi^{n_k})|\cF^{\tilde B^{n_k},\tilde W^{n_k},\tilde \xi^{n_k}}_t]h((\tilde B^{n_k},\tilde W^{n_k})|_{[0,t]},\tilde \xi^{n_k})]\\
 	= & \tilde \E[g(\tilde B^{n_k},\tilde W^{n_k},\tilde \xi^{n_k})h((\tilde B^{n_k},\tilde W^{n_k})|_{[0,t]},\tilde \xi^{n_k})]\\
 	= & \tilde \E[g(\tilde B,\tilde W,\tilde \xi)h((\tilde B,\tilde W)|_{[0,t]},\tilde \xi)]\\
 	& \tilde \E[\tilde \E[g(\tilde B,\tilde W,\tilde \xi)|\cF^{\tilde B,\tilde W,\tilde \xi}_t]h((\tilde B,\tilde W)|_{[0,t]},\tilde \xi)]\\
 	= & \tilde \E[G((\tilde B,\tilde W)|_{[0,t]},\tilde \xi)h((\tilde B,\tilde W)|_{[0,t]},\tilde \xi)]\\
 	= & \tilde \E[G((\tilde B^{n_k},\tilde W^{n_k})|_{[0,t]},\tilde \xi^{n_k})h((\tilde B^{n_k},\tilde W^{n_k})|_{[0,t]},\tilde \xi^{n_k})].\\
 	\end{split}
 	\end{equation}
 	Therefore, the bounded and measurable function $G$ provides a version of the conditional expectation appearing above, and the Lemma $2.1$ from \cite{lackerdensesets} can be applied.

 	%This is a joint distributional property and so the same holds for $\tilde \mu^{n_k}$ and $\cF^{\tilde B}$. A completely analogous argument shows that $(\tilde X,\tilde \mu)$ is compatible with $(\tilde B,\tilde W,\tilde \xi)$.

 It will be verified that for all $t\in I$, $\tilde \mu_t=\mathscr L(\tilde X_t|\cF^{\tilde B,\tilde \mu}_{\infty})$. Then, via Proposition \ref{prop equalityapproach}, it holds that $\tilde \mu_t=\mathscr L(\tilde X_t|\tilde \cF^{\tilde B,\tilde \mu}_t)$ for any $t\in I$ and $\tilde X$ is compatible with $(\tilde B,\tilde \mu)$. This verifies item iii) and the outstanding element of item ii). First, note that since $\tilde \mu$ is adapted to $\tilde \bF^{\tilde B,\tilde \mu}$ (the natural filtration of the tuple $\tilde B,\tilde \mu$), all that needs to be verified to show that $\tilde \mu_t=\mathscr L(\tilde X_t|\tilde \cF^{\tilde B,\tilde \mu}_\infty)$ for any $t\in I$ is that for $f:\cC\rightarrow \mathbb R$ and $g:C(I;\mathbb R^{d_B})\times C(I;\cP(\cC))\rightarrow \mathbb R$ continuous and bounded,
 	\begin{equation}\notag
 	\begin{split}
 	%\mathbb E [f(X_t)g(B,\mu)h((B,\mu)|_{[0,t]})]\\
 	\tilde \bE [f(\tilde X_{\cdot\wedge t})g(\tilde B,\tilde \mu)] & = \tilde \bE [\langle \tilde \mu_t,f \rangle g(\tilde B,\tilde \mu)].\\
 	\end{split}
 	\end{equation}
 	%\wh{Add this approximation since it will need to be done twice for the function f, once for $\P$ and once for $\mu_t$ ...}
 	It will hold for $f$ and $g$ bounded and measurable by a Lusin's theorem approximation. %, bearing in mind that there are two measures to consider, both $\tilde \E$ and $\tilde \mu_t$. 
 	The above equation holds since,
 	\begin{equation}
 	\begin{split}
 	\tilde \bE [f(\tilde X_{\cdot\wedge t})g(\tilde B,\tilde \mu)] = &  \lim_{k\rightarrow \infty}\tilde \bE [f(\tilde X^{n_k}_{\cdot\wedge t})g(\tilde B^{n_k},\tilde \mu^{n_k})] \\
 	= &  \lim_{k\rightarrow \infty} \bE [f( X^{n_k}_{\cdot\wedge t})g( B,\mu^{n_k})] \\
 	= &  \lim_{k\rightarrow \infty} \bE [f( X^{n_k}_{\cdot\wedge t})g( B,\mathscr L(X^{n_k}|\cF^{B}_t))] \\
 	= &  \lim_{k\rightarrow \infty} \bE [\mathscr L( X^{n_k}_{\cdot\wedge t}|\cF^{B}_\infty )(f)g( B,\mathscr L(X^{n_k}|\cF^{B}_t))] \\
 	= &  \lim_{k\rightarrow \infty} \bE [\langle \mu^{n_k}_t,f \rangle g( B,\mu^{n_k})] \\
 	= &  \lim_{k\rightarrow \infty} \tilde \bE [\langle \tilde \mu^{n_k}_t,f \rangle g(\tilde  B^{n_k},\tilde \mu^{n_k})] \\
 	= &  \tilde \bE [\langle \tilde \mu_t,f \rangle g(\tilde  B,\tilde \mu)]. \\
 	\end{split}
 	\end{equation}
 	
 	%%WHAT WAS BEFORE
 	%\begin{equation}
 	%\begin{split}
 	%\tilde \bE [f(\tilde X_t)g(\tilde B_{\cdot\wedge t},\tilde \mu_{\cdot\wedge t})] = &  \lim_{k\rightarrow \infty}\tilde \bE [f(\tilde X^{n_k}_t)g(\tilde B^{n_k}_{\cdot\wedge t},\tilde \mu^{n_k}_{\cdot\wedge t})] \\
 	%= &  \lim_{k\rightarrow \infty} \bE [f( X^{n_k}_t)g( B^{n_k}_{\cdot\wedge t},\mu^{n_k}_{\cdot\wedge t})] \\
 	%= &  \lim_{k\rightarrow \infty} \bE [f( X^{n_k}_t)g( B^{n_k}_{\cdot\wedge t},\mathscr L(X^{n_k}_{\cdot\wedge t}|\cF^{B}_{\cdot\wedge t})] \\
 	%= &  \lim_{k\rightarrow \infty} \bE [\mathscr L( X^{n_k}_t|\cF^{B}_t)(f)g( B^{n_k}_{\cdot\wedge t},\mathscr L(X^{n_k}_{\cdot\wedge t}|\cF^{B}_{\cdot\wedge t})] \\
 	%= &  \lim_{k\rightarrow \infty} \bE [\mu^{n_k}_t(f)g( B^{n_k}_{\cdot\wedge t},\mu^{n_k}_{\cdot\wedge t})] \\
 	%= &  \lim_{k\rightarrow \infty} \tilde \bE [\tilde \mu^{n_k}_t(f)g(\tilde  B^{n_k}_{\cdot\wedge t},\tilde \mu^{n_k}_{\cdot\wedge t})] \\
 	%= &  \tilde \bE [\tilde \mu_t(f)g(\tilde  B_{\cdot\wedge t},\tilde \mu_{\cdot\wedge t})]. \\
 	%\end{split}
 	%\end{equation}
 	The first and last equalities follow from dominated convergence, the second and sixth from the fact that the joint distribution of $(X^{n_k},B,\mu^{n_k})$ is the same as that of $(\tilde X^{n_k},\tilde B^{n_k},\tilde \mu^{n_k})$, the third and fifth equalities follow from the fact that $\{\mu^{n_k}_t\}_{t\in I}$ is a modification of $\{\mathscr L(X^{n_k}_t|\cF^B_t)\}_{t\in I}$ and the compatibility of $X^{n_k}$ with $B$, the fourth from the tower property of conditional expectation and definition of regular conditional distributions and the adaptedness of $\{\mathscr L(X^{n_k}_t|\cF^B_t)\}_{t\in I}$ to $\bF^{B}$. The convergence of $\langle \tilde \mu^{n_k}_t,f\rangle $ to $\langle \tilde \mu_t,f\rangle $ follows from the fact that $\tilde \mu^{n_k}_t\rightarrow \tilde \mu_t$ $\tilde \P$-a.s. in $(\cP_p(\cC),W_p)$ - see Theorem 6.9 in \cite{villani2009}. %The above argument holds for $\tilde B,\tilde \mu$ replaced with $\tilde B_{\cdot\wedge t'},\tilde \mu_{\cdot\wedge t'}$ and $g:C([0,t'];\mathbb R^{d_B})\times C([0,t'];\cP(\mathbb R^{d_X}))\rightarrow \mathbb R$. Therefore for any $t' \in I:$ $t\leq t'$, $\mu_t=\mathscr L(\tilde X_t|\cF^{\tilde B,\tilde \mu}_{t'})$. %The equality holds for $f$ and $g$ continuous and bounded. The usual approximation argument shows that the above holds for bounded measurable $f$ and $g$ instead. 

 	Finally, the equation \eqref{eq:MKVSDECNweak} will hold $\tilde \P$-a.s. for all $t \in I$ due to Lebesgue's dominated convergence theorem and a theorem due to Skorokhod (pg.32 \cite{skorokhod1965}).

 	% dominated convergence theorems (Lebesgue's and for Stochastic Integrals, see \cite{KallenbergBook2002}).
 	
 	All items in the definition of a weak solution have been verified and thus the proof is concluded.
 \end{proof}

\subsection{Weak Existence for Bounded Measurable Interaction Kernel}
%Many applications call for minimal regularity of the coefficients \color{red}[GIVE REF]\color{black}. 
Armed with Theorem \ref{thm wkex}, it is possible to prove the existence of weak solutions to a particular class of McKean-Vlasov SDEs with common noise, namely where the coefficients are bounded, measurable, non-degenerate, \emph{Markovian} (in the sense that $(b,\sigma,\rho)(t,x,m)=(b,\sigma,\rho)(t,x_t,m\circ\psi_t^{-1})$ where $\psi_t:\cC\ni x\rightarrow x_t\in\mathbb R^{d_X}$) and the dependence on measure is of the linear integrated form (this is sometimes referred to as a mean field interaction of scalar type). Hence, the spatial regularity of the coefficients can be relaxed at the price of a particular form of measure dependence. To be precise, the following assumption on the coefficients is formulated.

\begin{assumption}\label{ass krylovwkex}
The coefficients $b$, $\sigma$ and $\rho$ take the following form:
\begin{equation}
\label{eq integratedcoeffs}
\begin{split}
f(t,x,\nu):=\int \tilde f(t,x_t,y)\nu\circ\psi_t^{-1}(dy),\\
%b(t,x,\nu):=\int \tilde b(t,x_t,y)\nu\circ\psi_t^{-1}(dy),\\
%\sigma(t,x,\nu):=\int \tilde \sigma(t,x_t,y)\nu\circ\psi_t^{-1}(dy),\\
%\rho(t,x,\nu):=\int \tilde \rho(t,x_t,y)\nu\circ\psi_t^{-1}(dy).\\
\end{split}
\end{equation}
where $f$ can be replaced with either $b$, $\sigma$ or $\rho$. The functions (interaction kernels) $\tilde b$, $\tilde \sigma$ and $\tilde \rho$ are assumed to be bounded and measurable and, letting  $\Sigma := (\sigma\,\rho)$, %$\Sigma :=\begin{pmatrix}
%\sigma & 0  \\
%0 & \rho  
%\end{pmatrix}$,
\begin{equation}
\label{eq nondegen}
\inf_{t,x,\nu}\inf_{\lambda\in\mathbb R^{d_X}:|\lambda|=1} \lambda^T  \Sigma\Sigma^T \lambda >0.
\end{equation}
\end{assumption} 

\begin{theorem}[Weak Existence for Bounded Measurable Interaction Kernel]\label{thm wkexintegrated}
Under Assumption \ref{ass krylovwkex}, the corresponding McKean-Vlasov SDE with common noise has a weak solution. 
\end{theorem}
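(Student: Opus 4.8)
The plan is to realise the solution as a weak limit of solutions to regularised equations, to which Theorem~\ref{thm wkex} applies, and to exploit the non-degeneracy \eqref{eq nondegen} via a Krylov-type occupation-time estimate in order to pass to the limit despite the interaction kernels being merely measurable. First I would mollify the kernels. Mollifying $\tilde b,\tilde\sigma,\tilde\rho$ in the interaction variable alone produces smooth-in-$y$ kernels, bounded by the original constants and converging a.e.\ and in $L^q_{\mathrm{loc}}$; the resulting integrated coefficient $\Sigma^m(t,x,\nu)$ then equals $\Sigma(t,x,\tilde\nu)$ with $\tilde\nu\circ\psi_t^{-1}=(\nu\circ\psi_t^{-1})*\chi_m$ still a probability measure, so \eqref{eq nondegen} is preserved with the same constant. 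A further mollification in the spatial variable, arranged so as to retain \eqref{eq nondegen} uniformly --- e.g.\ by mollifying $a^m:=\Sigma^m(\Sigma^m)^T$ (an average of matrices $\geq\delta I$) rather than its square root, or by a nested limit using that the Euler scheme \eqref{eq:eulerscheme} and the compactness argument behind Theorem~\ref{thm wkex} only use spatial continuity when passing to the limit --- yields coefficients $b^m,\sigma^m,\rho^m$ satisfying Assumption~\ref{ass coefficients}, uniformly bounded and uniformly non-degenerate, and Theorem~\ref{thm wkex} then supplies for each $m$ a weak solution $(B^m,W^m,\xi^m,X^m,\mu^m)$ with $\xi^m\sim\mathscr L(\xi)$.

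Uniform boundedness gives the a priori moment and increment estimates of Lemma~\ref{lem apriori} uniformly in $m$, and the Kolmogorov-type bound of \eqref{eq eulerwest} shows $\{\mu^m\}$ is tight in $C(I;\cP_p(\cC))$, so $\{\mathscr L(X^m,\mu^m,B^m,W^m)\}$ is tight. I extract a weakly convergent subsequence, pass to a common space by Skorokhod's theorem to get a.s.-convergent copies $(\tilde X^m,\tilde\mu^m,\tilde B^m,\tilde W^m)\to(\tilde X,\tilde\mu,\tilde B,\tilde W)$, set $\tilde\xi:=\tilde X_0$ and let $\tilde\bF$ be the generated filtration. Items i)--iii) of Definition~\ref{def weaksoln} --- integrability, the two compatibility statements and the increment-independence of $\tilde W$, that $\tilde B,\tilde W$ are $\tilde\bF$-Brownian, and that $\tilde\mu_t=\mathscr L(\tilde X_{\cdot\wedge t}\,|\,\cF^{\tilde B,\tilde\mu}_\infty)$ whence, by Proposition~\ref{prop equalityapproach}, the conditional-law identity iii) and the outstanding part of ii) --- follow exactly as in the proof of Theorem~\ref{thm wkex}: these are properties of the joint law, stable under weak limits with fixed input marginal (Lemma~2.1 of \cite{lackerdensesets}), and are enjoyed by the approximating solutions.

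The crux is item iv). I pass to the limit in $\tilde X^m_t=\tilde X^m_0+\int_0^t b^m(s,\tilde X^m_{\cdot\wedge s},\tilde\mu^m_s)\,ds+\int_0^t\sigma^m\,d\tilde W^m+\int_0^t\rho^m\,d\tilde B^m$; by boundedness and the Skorokhod-type stochastic-integral convergence already used in Theorem~\ref{thm wkex} (pg.~32 of \cite{skorokhod1965}) it suffices to prove $\int_0^t\!\int\tilde f^m(s,\tilde X^m_s,y)\,\tilde\mu^m_s\circ\psi_s^{-1}(dy)\,ds\to\int_0^t\!\int\tilde f(s,\tilde X_s,y)\,\tilde\mu_s\circ\psi_s^{-1}(dy)\,ds$ for $f=b$ in $L^1$ and for $f=\sigma,\rho$ in $L^2_{\mathrm{loc}}(ds\otimes\tilde\P)$. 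I insert a continuous approximation $\tilde f_\varepsilon$ of $\tilde f$: the $\tilde f_\varepsilon$-term converges by the a.s.\ convergence of $(\tilde X^m,\tilde\mu^m)$ and dominated convergence, while the errors $\tilde\E\int_0^t\!\int|\tilde f^m-\tilde f_\varepsilon|(s,\tilde X^m_s,y)\,\tilde\mu^m_s\circ\psi_s^{-1}(dy)\,ds$ are handled by integrating out the interaction variable first --- viewing $\int(\tilde f^m-\tilde f_\varepsilon)(s,x,y)\,\tilde\mu^m_s\circ\psi_s^{-1}(dy)$ as a bounded function of $x$ and using that $\mathscr L(\tilde X^m_s)$ has a density, uniformly in $m$, by Krylov's estimate for the Itô process $\tilde X^m$ whose combined diffusion is $\geq\delta I$ --- and then by Krylov's occupation-time inequality $\tilde\E\int_0^T|g(s,\tilde X^m_s)|\,ds\leq C\|g\|_{L^{d_X+1}([0,T]\times\mathbb R^{d_X})}$, uniform in $m$ for the same reason, which bounds these errors by a Lebesgue norm of $\tilde f^m-\tilde f_\varepsilon$ that can be made small by choosing $\varepsilon$ and $m$ suitably. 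Hence \eqref{eq:MKVSDECNweak} holds $\tilde\P$-a.s.\ for all $t$, and the proof is complete.

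I expect the real difficulty to lie in this last step and in the regularisation feeding it: the approximating coefficients must simultaneously converge to the given $b,\sigma,\rho$, satisfy Assumption~\ref{ass coefficients}, and retain \eqref{eq nondegen} uniformly, and reconciling these in the spatial argument (convolution generally spoils ellipticity of a matrix square root) is the delicate point. Compounding this, the interaction variable is integrated against the conditional laws $\mu_s\circ\psi_s^{-1}$, which can be singular when the private diffusion degenerates, so Krylov's estimate is available only for the single-particle marginal; the error analysis must therefore be arranged to use only that --- for the mollified kernel, by treating the interaction integral as a bounded function of $\tilde X^m_s$ alone and applying Lebesgue differentiation in the interaction variable at its a.e.-unique, hence (by the density of $\mathscr L(\tilde X^m_s)$) $\tilde\P$-a.s., Lebesgue points.
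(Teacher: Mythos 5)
Your overall architecture matches the paper's (mollify the kernels, invoke Theorem~\ref{thm wkex} for the regularised equations, tightness and Skorokhod representation, items i)--iii) exactly as in the proof of Theorem~\ref{thm wkex}, and a Krylov-type estimate to pass to the limit in item iv)), and you correctly identify where the real difficulty sits. But your treatment of that difficulty has a genuine gap. To control the error term you must bound
$\E\big[\int_0^t\int|\tilde f^m-\tilde f_\varepsilon|(s,X^m_s,y)\,\mu^m_s\circ\psi_s^{-1}(dy)\,ds\big]$, and conditioning on $\cF^{B^m,\mu^m}$ turns the inner expectation into the \emph{double} integral $\iint|\tilde f^m-\tilde f_\varepsilon|(s,x,y)\,\mu^m_s\circ\psi_s^{-1}(dx)\,\mu^m_s\circ\psi_s^{-1}(dy)$, since $\mu^m_s\circ\psi_s^{-1}$ is precisely the conditional law of $X^m_s$. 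Bounding this by a Lebesgue norm of $\tilde f^m-\tilde f_\varepsilon$ requires an occupation-time estimate for the \emph{product} of the conditional law with itself, not merely for the single-particle marginal $\mathscr L(X^m_s)$; your proposed substitute --- Lebesgue differentiation at a.e.\ points of the interaction variable --- yields no quantitative, $m$-uniform control, because the random measures $\mu^m_s\circ\psi_s^{-1}(dy)$ vary with $m$ and may concentrate exactly on the small exceptional set where $\tilde f^m-\tilde f_\varepsilon$ is not small. The idea you are missing is the paper's product-disintegration device: disintegrate $\mathscr L(X^n,W^n,\mu^n,B^n)$ over $(\mu^n,B^n)$ and form $\pi^n:=\prod_{i=1}^2 p^n_{X,W}(dx^i,dw^i,\nu,b)\,\mathscr L(\mu^n,B^n)(d\nu,db)$, which supplies a second copy $(\hat X^n,\hat W^n)$ conditionally independent of $(X^n,W^n)$ given $(\mu^n,B^n)$ with the same conditional law. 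Then the double integral equals $\E[|\tilde f^n-\tilde f^N|(s,X^n_s,\hat X^n_s)]$, and Krylov's estimate applied to the $2d_X$-dimensional It\^o process $(X^n,\hat X^n)$ bounds the time integral by $|\tilde f^n-\tilde f^N|_{L_{1+2d}}$; this is where the non-degeneracy \eqref{eq nondegen} is actually consumed.

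A secondary remark: you over-engineer the mollification. Theorem~\ref{thm wkex} allows degenerate diffusions, so the approximating coefficients need only be bounded and jointly continuous; there is no need to preserve \eqref{eq nondegen} for the mollified coefficients in order to \emph{construct} the approximations (the paper simply convolves the kernels jointly in $(x,y)$). Non-degeneracy is needed only for the Krylov estimate along the approximating sequence, and there the relevant object is the diffusion matrix of the pair process built from the mollified coefficients --- a point worth checking carefully, but not one that your nested spatial regularisation addresses.
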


\noindent\emph{Proof Outline.} Similar to the proof of Mishura and Veretennikov \cite{mishura2016existence} in the case without common noise, here the argument relies on a mollification of the interaction kernels $\tilde b$, $\tilde \sigma$ and $\tilde \rho$. The resulting \emph{mollified} McKean-Vlasov SDEs with common noise have weak solutions by application of Theorem \ref{thm wkex} and the solution processes satisfy the estimates given in Lemma \ref{lem apriori}. Therefore, a weakly convergent subsequence can be extracted from the sequence of joint laws of the approximate solutions. On a probability space given by the Skorokhod Representation Theorem, the limit process can be shown to be a solution to the original, un-mollified McKean-Vlasov SDE with common noise via application of estimates due to Krylov \cite{krylovBookControlledDiffProcs} (Ch.2 Sec.3 Thm.4). 

\begin{proof}
First, the coefficients are mollified by replacing the interaction kernels with kernels $\tilde b^n$, $\tilde \sigma^n$ and $\tilde \rho^n$ that are defined by,
$$\tilde f^n(t,x,y):=n^{2d_X}\zeta(nx,ny)\ast \tilde f(t,x,y),$$
where $\zeta$ is a non-negative smooth function, vanishing for $|x|+|y|>1$, with $\int \zeta(x,y)dxdy=1$.
It is easy to see that the mollified coefficients satisfy the conditions of Theorem \ref{thm wkex} and hence there exist weak solutions $(X^n,\mu^n,B^n,W^n)$ to the McKean-Vlasov SDEs with common noise defined by the mollified coefficients. Since the kernels' bounds are preserved by the mollification, the coefficients of the mollified McKean-Vlasov SDEs with common noise are uniformly bounded and therefore, by a standard procedure, the conclusion of Lemma \ref{lem apriori} holds for this sequence of weak solutions. By the same argument from the proof of Theorem \ref{thm wkex}, one can extract a weakly convergent subsequence of the laws of these solutions. It will be convenient however, to consider another sequence of probability measures that gives access to copies of the solutions that are conditionally independent given $(\mu^n,B^n)$.

Denote the laws of the solutions (with $\xi^i$ hidden inside $X^i$ since $\xi^i=X^i_0$) by $\mathscr L(X^n,\mu^n,B^n,W^n)$. Disintegrate these distributions (see Chapter 10 in volume II of \cite{bogachevMeasureTheoryBook}) into the joint distribution of $(\mu^n,B^n)$ and the conditional distribution of $(X^n,W^n)$ \emph{given} $\mu^n,B^n$. This is written as 
	$$\mathscr L(X^n,W^n,\mu^n,B^n)(dx,dw,d\nu,db)=p_{X,W}^n(dx,dw,\nu,b)\mathscr L(\mu^n,B^n)(d\nu,db).$$
Introducing a new sequence of probability distributions,
$$\pi^n(dx^1,dw^1,dx^2,dw^2,d\nu,db):=\prod_{i=1}^{2} p_{X,W}^n(dx^i,dw^i,\nu,b) \mathscr L(\mu^n,B^n)(d\nu,db) $$ 
and equipping the product space $\cC\times C(I;\mathbb R^{d_W})\times \cC \times C(I;\mathbb R^{d_W}) \times C(I;\cP(\cC))\times C(I;\mathbb R^{d_B})$ with $\pi^n$, the canonical processes $(X,W,\hat X,\hat W,\mu,B)$ yields two weak solutions $(X,W,\mu,B)$ and $(\hat X,\hat W,\mu,B)$ with the property that $(X,W)$ is conditionally independent of $(\hat X,\hat W)$ given $(\mu,B)$. It is easy to see that the sequence $\pi^n$ is also sequentially compact. As before, one extracts a weakly convergence subsequence and applies Skorokhod's Representation Theorem. Then, abusing notation to let $n$ denote the subsequence, on some probability space there exists random elements $\{(X^{n},W^{n},\hat X^{n}, \hat W^{n},\mu^n,B^n)\sim \pi^n\}_n$ and $(X,W,\hat X,\hat W,\mu,B)\sim \pi=\lim_n\pi^n$ such that $(X^{n},W^{n},\hat X^{n},\hat  W^{n},\mu^n,B^n)\rightarrow (X,W,\hat X,\hat W,\mu,B)$ surely. The aim is to show that $(X,W,\mu,B)$ is a weak solution to the un-mollified McKean-Vlasov SDE with common noise. The first three items of Definition \ref{def weaksoln} are verified as in the proof of Theorem \ref{thm wkex}. The final item (that the SDE holds), however, requires additional consideration. It remains to show that
\begin{equation}
\begin{split}\notag
\int_0^tb^n(s,X^n,\mu^n)\,ds& \rightarrow \int_0^tb(s,X,\mu)\,ds,\\
\int_0^t\sigma^n(s,X^n,\mu^n)\,dW^n_s& \rightarrow\int_0^t\sigma(s,X,\mu)\,dW_s\,\,\text{and} \\
\int_0^t\rho^n(s,X^n,\mu^n)\,dB^n_s& \rightarrow \int_0^t\rho(s,X,\mu)\,dB_s\\
\end{split}
\end{equation}
$\P$-a.s. for all $t\in I$, again allowing $n$ to denote the further subsequence taken to obtain this convergence. Consider some $t\in I\cap \bQ$, and the following sequence of estimates:
\begin{equation}\notag%\label{eq sequence}
\begin{split}
& \E\bigg [ \bigg|\int_0^tb^n(s,X^n,\mu^n)ds-\int^t_0b(s,X,\mu)ds\bigg |\bigg ]\\
\leq &  \E\bigg [\int_0^t|b^n(s,X^n,\mu^n)ds-b(s,X,\mu)|ds\bigg ]\\
 \leq &  \E\bigg [\int_0^t|b^n(s,X^n,\mu^n)-b^N(s,X^n,\mu^n)|ds\bigg ] + \E\bigg [\int_0^t|b^N(s,X^n,\mu^n)-b^N(s,X,\mu)|ds\bigg ]\\
 & + \E\bigg [\int_0^t|b^N(s,X,\mu)-b(s,X,\mu)|ds\bigg ]\\
 \end{split}
 \end{equation}
 for some $N\in \bN$. Then, by the form of the measure dependence of $b$ and the Tower property,
 \begin{equation}\label{eq sequence}
 \begin{split}
 %\leq &  \E[\int_0^T\int |\tilde b^n(s,X^n,y)-\tilde b^N(s,X^n,y)|\mu^n\circ\psi_s^{-1}(dy)ds] + \E[\int_0^T|b^N(s,X^n,\mu^n)ds-b^N(s,X,\mu)|ds]\\
 % & + \E[\int_0^T\int |\tilde b^N(s,X,y)ds-\tilde b(s,X,y)|\mu\circ\psi_s^{-1}(dy)ds]\\
& \E\bigg [\bigg|\int_0^tb^n(s,X^n,\mu^n)ds-\int^t_0b(s,X,\mu)ds\bigg |\bigg ]\\
 \leq &  \E\bigg [\int_0^t\int |\tilde b^n-\tilde b^N|(s,X^n_s,y)\mu^n\circ\psi_s^{-1}(dy)ds\bigg ]  + \E\bigg [\int_0^t|b^N(s,X^n,\mu^n)-b^N(s,X,\mu)|ds\bigg ]\\
& + \E\bigg [\int_0^t\int |\tilde b^N-\tilde b|(s,X_s,y)\mu\circ\psi_s^{-1}(dy)ds\bigg ]\\
\leq &  \int_0^t\E\bigg [\E\bigg [\int |\tilde b^n-\tilde b^N|(s,X^n_s,y)\mu^n\circ\psi_s^{-1}(dy)\bigg |\cF^{B^n,\mu^n}\bigg ]\bigg ]ds  + \E\bigg [\int_0^t|b^N(s,X^n,\mu^n)ds-b^N(s,X,\mu)|ds\bigg ]\\
& + \int_0^t\E\bigg [\E\bigg [\int |\tilde b^N-\tilde b|(s,X_s,y)\mu\circ\psi_s^{-1}(dy)\bigg |\cF^{B,\mu}\bigg ]\bigg ]ds.\\
\end{split}
\end{equation}
The first term in the final line is handled as follows:
\begin{equation}\notag
\begin{split}
& \int_0^t\E\bigg [\E\bigg [\int |\tilde b^n-\tilde b^N|(s,X^n_s,y)\mu^n(dy)\bigg |\cF^{B^n,\mu^n}\bigg ]\bigg ]ds\\
 = & \int_0^t\E\bigg [\iint |\tilde b^n-\tilde b^N|(s,x,y)\mu^n\circ\psi_s^{-1}(dx)\otimes\mu^n\circ\psi_s^{-1}(dy)\bigg ]ds\\
 = & \int_0^t\E[\E[ |\tilde b^n-\tilde b^N|(s,X^n_s,\hat X^n_s)|\cF^{B^n,\mu^n}]]ds\\
 = & \int_0^t\E[|\tilde b^n-\tilde b^N|(s,X^n_s,\hat X^n_s)]ds\\
 \leq & \,\, | \tilde b^n-\tilde b^N  |_{L_{1+2d}}.\\
\end{split}
\end{equation}
The above equalities hold due to the construction of the measures $\pi^n$ and the inequality by application of Theorem 4, Sec.3, Ch.2 of \cite{krylovBookControlledDiffProcs}. 

Repeating the above sequence of estimates with the superscript $n$ removed, the final term of \eqref{eq sequence} can be dealt with leading to the estimate:
\begin{equation}\notag
\begin{split}
& \E\bigg [\bigg |\int_0^tb^n(s,X^n,\mu^n)ds-\int^t_0b(s,X,\mu)ds\bigg |\bigg ]\\
\leq & \,\, | \tilde b^n-\tilde b^N  |_{L_{1+2d}} + \E\bigg [\int_0^t|b^N(s,X^n,\mu^n)ds-b^N(s,X,\mu)|ds\bigg ] +  | \tilde b^N-\tilde b  |_{L_{1+2d}}.\\
\end{split}
\end{equation}
For any $\varepsilon >0$, there is an $N$ large enough such that for $n > N$, $| \tilde b^n-\tilde b^N  |_{L_{1+2d}}+| \tilde b^N-\tilde b  |_{L_{1+2d}}<\varepsilon/2$. Also, as $n\rightarrow \infty$, by the continuity of $b^N$, the middle term in the above inequality vanishes. Therefore, for each $N \in \bN$, there is an $n_N$ such that for all $n>n_N$, the middle term is bounded by $\varepsilon/2$ and therefore, 
$$\int_0^tb^n(s,X^n,\mu^n)ds\overset{\P}{\rightarrow} \int_0^tb(s,X,\mu)ds $$
for any $t\in I\cap \bQ$. This can be elevated to almost sure convergence along a subsequence and to all $t\in I$ by continuity. To prove the corresponding limits for the stochastic integrals, one follows an analogous procedure to that of the drift convergence. Writing $f,M$ in place of $\sigma,W$ or $\rho, B$, one can estimate as follows:
 \begin{equation}\label{eq stochconv}
 \begin{split}
 & 1/3\E\bigg [ \bigg (\int_0^tf^n(s,X^n,\mu^n)dM^n_s-\int^t_0f(s,X,\mu)dM_s\bigg )^2\bigg ]\\
 \leq &  \E\bigg [ \bigg (\int_0^t(f^n(s,X^n,\mu^n)-f^N(s,X^n,\mu^n))dM^n_s\bigg )^2\bigg ]\\
 & + \E\bigg [ \bigg (\int_0^tf^N(s,X^n,\mu^n)dM^n_s-\int_0^tf^N(s,X,\mu)dM_s\bigg )^2\bigg ]\\
 & +  \E\bigg [ \bigg (\int_0^t(f^N(s,X,\mu)-f(s,X,\mu))dM_s\bigg )^2\bigg ]\\
 \end{split}
 \end{equation}
 for some $N\in \bN$. To finish, apply the It$\hat{\text{o}}$ isometry to the first and third terms on the right hand side of \eqref{eq stochconv} and follow an almost exactly analogous procedure as with the drift convergence, taking care of the second power appearing. Handle the second term with Skorokhod's lemma for the convergence of stochastic integrals, see \cite{skorokhod1965} pg.32. One arrives at the following estimate:
 \begin{equation}\notag% \label{eq stochconv}
 \begin{split}
 & 1/3\E\bigg [ \bigg (\int_0^tf^n(s,X^n,\mu^n)dM^n_s-\int^t_0f(s,X,\mu)dM_s\bigg )^2\bigg ]\\
 \leq &   | \tilde f^n-\tilde f^N  |_{L_{2(1+2d)}}^2 
  + \E\bigg [ \bigg (\int_0^tf^N(s,X^n,\mu^n)dM^n_s-\int_0^tf^N(s,X,\mu)dM_s\bigg )^2\bigg ] + | \tilde f^N-\tilde f  |_{L_{2(1+2d)}}^2 \\ 
  < & \varepsilon
 \end{split}
 \end{equation}
for sufficiently large $n$ depending on the choice of $\varepsilon>0$. 
\end{proof}

 \section{Uniqueness in Joint Law}
 In this section, a particular class of equations of the type \eqref{eq:MKVSDECNweak} will be studied. Namely, the case where the diffusion coefficients $\sigma$ and $\rho$ do not depend upon measure.
 The authors expect that with similar techniques to those given in \cite{stannat2019} and \cite{mishura2016existence} the result here can be extended to include some spatial growth. However, in the interest of conveying how one overcomes the barriers of extending this method to the common noise setting without become mired in additional technical difficulties, the following assumptions are made regarding the coefficients.
 \begin{assumption}\label{ass uniq}
 	The coefficients $b$, $\sigma$ and $\rho$ are measurable and progressive. The coefficients $\sigma$ and $\rho$ do not depend on the measure argument and are such that there exists a unique strong solution to the driftless SDE:
 	\begin{equation}\label{eq driftless}
 	dX^0_t=\sigma(t,X^0)dW_t+\rho(t,X^0)dB_t.
 	\end{equation}
 	Further, $d_X=d_W$, $\sigma$ is non-degenerate, invertible and $\sigma^{-1} b$ is bounded and Lipschitz continuous in the measure component with respect to the total variation distance, i.e. there is a constant $c_{\text{\tiny TV}}$ such that
 	\begin{equation}\notag
 	|\sigma(t,x)^{-1}b(t,x,\mu)-\sigma(t,x)^{-1}b(t,x,\nu)|\leq c_{\text{\tiny TV}}d_{\text{\tiny TV}}(\mu,\nu).
 	\end{equation}
 \end{assumption}
 Under the above assumption, the McKean-Vlasov SDE with common noise, \eqref{eq:MKVSDECNweak}, takes the form:
 \begin{equation}\label{eq: MKVSDECNuniq}
 X_t=\xi+\int_{0}^{t}b(s,X_{\cdot\wedge s},\mu_s)\, ds +\int_0^t \sigma(s,X_{\cdot\wedge s})\, dW_s+\int_{0}^{t} \rho(s,X_{\cdot\wedge s})\,dB_s.\,\, 
 \end{equation}
 
 \begin{definition}[Uniqueness in Joint Law]
 	The McKean-Vlasov SDE with common noise is said to satisfy `uniqueness in joint law' if any two weak solutions (in the sense of Definition \ref{def weaksoln}), $(X^1,\mu^1,B^1,W^1,\xi^1)$ and $(X^2,\mu^2,B^2,W^2,\xi^2)$ have the same \emph{joint} distribution. 
 \end{definition}
 
 %\subsection{Uniqueness Theorem}
 \begin{theorem}\label{thm jwkuniq}
 	Under Assumption \ref{ass uniq}, the McKean-Vlasov SDE with common noise of the form \eqref{eq: MKVSDECNuniq} satisfies uniqueness in joint law.% on $I=\mathbb{R}^+$.
 \end{theorem}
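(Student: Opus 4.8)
The plan is to run a Girsanov-transformation argument in the spirit of Mishura and Veretennikov \cite{mishura2016existence}: after an equivalent change of measure, any weak solution becomes a measurable functional, on a common intermediary space, of the driftless solution driven by Brownian inputs together with the common noise; the only datum not pinned down by this is the conditional measure flow $\mu$, and it is identified as a deterministic measurable function of the common-noise path by a contraction estimate built on a tailored Monge--Kantorovich cost.

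Fix a horizon $T$ and a weak solution $(X,\mu,B,W,\xi)$ on $(\Omega,\cF,\bF,\P)$. Since $\sigma^{-1}b$ is bounded, the Dol\'eans--Dade exponential
\[
Z_t:=\exp\Big(-\int_0^t (\sigma^{-1}b)(s,X_{\cdot\wedge s},\mu_s)\,dW_s-\tfrac12\int_0^t |(\sigma^{-1}b)(s,X_{\cdot\wedge s},\mu_s)|^2\,ds\Big)
\]
is a true $\bF$-martingale, so $d\bQ:=Z_T\,d\P$ defines $\bQ\sim\P$ on $\cF_T$. As the density is driven only by $W$ and $\langle B,W\rangle\equiv 0$, Girsanov's theorem and L\'evy's characterisation give that under $\bQ$ the pair $(\widetilde W,B)$, with $\widetilde W_t:=W_t+\int_0^t(\sigma^{-1}b)(s,X_{\cdot\wedge s},\mu_s)\,ds$, is a Brownian motion and $X$ solves the driftless equation \eqref{eq driftless} driven by $(\widetilde W,B)$ with initial datum $\xi$. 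Using $\E[Z_T\,|\,\cF_0]=1$ together with the compatibility built into Definition \ref{def weaksoln}, one checks that under $\bQ$ the datum $\xi$ keeps its law $\mathscr L(\xi)$ with the independence and compatibility relative to $(\widetilde W,B)$ needed to invoke strong uniqueness for \eqref{eq driftless}; that uniqueness then yields a measurable map $\Phi$ with $X=\Phi(\xi,\widetilde W,B)$, $\bQ$- and hence $\P$-a.s. In particular the conditional law $Q^{b}:=\mathscr L^{\bQ}(X\,|\,B=b)$ is a fixed kernel depending only on $\sigma,\rho$ and $\mathscr L(\xi)$, the same for every weak solution.

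Next I would recast the measure-flow relation as a fixed-point equation: conditioning on the common noise, the drift $(\sigma^{-1}b)(s,\cdot,\mu_s)$ is $\cF^{B,\mu}_s$-measurable, so a conditional form of Girsanov shows that $\nu:=\mathscr L^{\P}(X\,|\,\cF^{B,\mu}_\infty)$ is obtained from $Q^{B}$ by the exponential reweighting with integrand $\sigma^{-1}b(\cdot,\cdot,\mu_\cdot)$, while Proposition \ref{prop equalityapproach} and Remark \ref{rem consequence} identify $\mu_t$ as the $t$-stopped-path marginal of $\nu$. Thus $\mu$ is a fixed point of a map $m\mapsto\Gamma(B,m)$ where, for a frozen common-noise path $b$, $\Gamma(b,m)$ is the flow of $t$-stopped-path marginals of the Girsanov transform of $Q^{b}$ with drift $\sigma^{-1}b(\cdot,\cdot,m_\cdot)$. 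To prove $\Gamma(b,\cdot)$ has a unique fixed point I would compare two flows $m,m'$: the total-variation Lipschitz bound on $\sigma^{-1}b$ and a Pinsker/relative-entropy estimate for Girsanov transforms give, with the Monge--Kantorovich cost on $\cC$ chosen so that the induced Kantorovich distance controls the time-integral of squared total variation of stopped-path laws,
\[
d_{TV}\big(\Gamma(b,m)_t,\Gamma(b,m')_t\big)^2\;\le\;C\int_0^t d_{TV}(m_s,m'_s)^2\,ds,
\]
which for two fixed points is a Gronwall inequality forcing $m=m'$; a measurable-selection argument then produces a measurable $M$ with $\mu=M(B)$, $\P$-a.s.

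It then remains to assemble the joint law. Since $\mu=M(B)$, the law of $(B,\mu)$ is the pushforward of Wiener measure under $b\mapsto(b,M(b))$, the same for any two solutions; and under $\bQ$ the tuple $(\xi,X,\mu,B,W)$ equals $\big(\xi,\Phi(\xi,\widetilde W,B),M(B),B,\widetilde W-\int_0^{\cdot}\sigma^{-1}b(s,\Phi(\xi,\widetilde W,B)_{\cdot\wedge s},M(B)_s)\,ds\big)$, a single measurable functional of the independent triple $(\xi,\widetilde W,B)$ whose $\bQ$-law is fixed; transferring back to $\P$ via $d\P/d\bQ=1/Z_T$, itself a fixed functional of $(\xi,\widetilde W,B)$, shows the $\P$-law of $(X,\mu,B,W,\xi)$ is the same for any two weak solutions, and letting $T\uparrow\infty$ gives uniqueness in joint law. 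The step I expect to be most delicate is the second one: ensuring that after the change of measure $(\widetilde W,B,\xi)$ carries exactly the independent Brownian structure required to apply strong uniqueness of \eqref{eq driftless} with a solution-independent $\Phi$, and that the conditional Girsanov genuinely reproduces $Q^{b}$ --- this is precisely where the compatibility hypotheses of Definition \ref{def weaksoln} must be exploited --- together with pinning down the correct path-space cost so that the conditional total-variation estimate closes into a Gronwall loop despite the path dependence of $b$.
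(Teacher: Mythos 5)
Your overall strategy---Girsanov back to the driftless equation, then a contraction in total variation driven by the $d_{\text{TV}}$-Lipschitz assumption---is the right family of ideas, and your final assembly of the joint law from a solution-independent functional would indeed close the argument \emph{if} the intermediate claim $\mu=M(B)$ were available. But that claim is substantially stronger than uniqueness in joint law: it asserts that every weak solution is weak-strong in the sense of Definition \ref{def weakstrongmu}, i.e.\ that the measure flow carries no randomness beyond the common noise, and the paper is built around \emph{not} having to prove this. The gap sits exactly where you flag it. To write $\mu$ as a fixed point of a deterministic pathwise map $m\mapsto\Gamma(b,m)$ you need (a) that $\mathscr L^{\bQ}\big((X,\widetilde W)\,\big|\,\cF^{B,\mu}_\infty\big)$ depends on $B$ alone, which requires $(\xi,\widetilde W)\indep(B,\mu)$ under $\bQ$ together with a freezing/stochastic-Fubini argument for $\int\sigma^{-1}(s,X)b(s,X,\mu_s)\,d\widetilde W_s$ with $(B,\mu)$ frozen, and (b) that the conditional normalisation satisfies $\E^{\bQ}[\,d\P/d\bQ\mid\cF^{B,\mu}_\infty]=1$, i.e.\ the stochastic exponential remains a martingale conditionally on the pair $(B,\mu)$. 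Both are consequences of the compatibility hypotheses but neither is automatic; without them the identity $\mu=\Gamma(B,\mu)$ is not established, since a priori $\mathscr L^{\P}(X\mid\cF^{B,\mu}_\infty)$ conditions on strictly more information than $\sigma(B)$, and counter-example 5.1 of \cite{carmona2015mfgcn} shows that $B$-measurability of $\mu$ cannot be taken for granted. You would additionally need existence and a measurable selection of the fixed point (e.g.\ via Picard iteration of $\Gamma(b,\cdot)$) to define $M$ at all.

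The paper's proof is engineered precisely to avoid this. Rather than identifying $\mu$ as a function of $B$, it represents the extra randomness explicitly via Blackwell--Dubins, writing $\mu^i=G^i(U,B)$ with $U$ uniform and independent of $B$, and introduces the cost $c^*$ which equals $+\infty$ unless $(u^1,b^1)=(u^2,b^2)$; any finite-cost (hence the optimal) coupling therefore forces the two solutions to share the \emph{same} $(U,B)$. After that, the Girsanov representation through the driftless equation (Lemma \ref{lem uniqRSDE}), Kantorovich duality with $[0,4]$-valued $c^*$-convex test functions, and the BDG/total-variation estimates yield $W^*_T\le\alpha W^*_T$ with $\alpha<1$ on a short horizon, whence $W^*_T=0$, and induction over intervals $[0,kT]$ finishes. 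If you wish to pursue your route you must first supply proofs of (a) and (b) above; as written, the fixed-point equation on which everything else rests is asserted rather than proved.
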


The proof of Theorem \ref{thm jwkuniq} will be given in Subsection \ref{sec uniqproof}. The following subsection provides a lemma that establishes uniqueness in joint law for the SDEs with random coefficients obtained when one considers the measure valued process provided by a weak solution to \eqref{eq: MKVSDECNuniq} as a stochastic input.

\subsection{Auxiliary Lemma}
\begin{definition}\label{def rsdesoln}
	A filtered probability space supporting Brownian motions $W$ and $B$, an adapted stochastic process $\mu$ and an $\cF_0$ measurable random vector $\xi$, such that $(B,\mu)\indep (W,\xi)$ is said to be a weak solution on $[0,T]$ to the SDE with random coefficients: 
	\begin{equation}\label{eq rsdelem}
	X_t=\xi+\int_0^tb(s,X,\mu)ds+\int_0^t\sigma(s,X)dW_s+\int_0^t\rho(s,X)dB_s,
	\end{equation}
	if it also supports an adapted process $X$, such that
	\begin{enumerate}
		\item $\P$-a.s. $\forall$ $t\in [0,T]$, $\int_0^t |b(s,X,\mu)|+|\sigma(s,X,\mu)|^2+|\rho(s,X ,\mu)|^2\, ds < \infty.$
		\item $X,\mu,B,W,\xi$ satisfy \eqref{eq rsdelem} $\mathbb{P}$-a.s. $\forall$ $t\in [0,T]$.
	\end{enumerate}
\end{definition}
\begin{lemma}
	\label{lem uniqRSDE}
	Under Assumption \ref{ass uniq}, the SDE with random coefficients \eqref{eq rsdelem} satisfies joint uniqueness in law on $[0,T]$ for any $T<\infty$. \newline
	Which is to say that given any two weak solutions of type of Definition \ref{def rsdesoln}, $(\Omega^1,\cF^1,\P^1,X^1,\mu^1,B^1,W^1,\xi^1)$ and $(\Omega^2,\cF^2,\P^2,X^2,\mu^2,B^2,W^2,\xi^2)$ such that $\mathscr L^1(\mu^1,B^1,W^1,\xi^1)=\mathscr L^2(\mu^2,B^2,W^2,\xi^2)$, the joint distributions of the solutions $\mathscr L^1(X^1_{\cdot\wedge T},\mu^1,B^1,W^1,\xi^1)$ and $\mathscr L^2(X^2_{\cdot \wedge T},\mu^2,B^2,W^2,\xi^2)$ are equal. 
\end{lemma}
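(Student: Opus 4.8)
\noindent\emph{Proof plan.} Fix $T<\infty$ and two weak solutions $(\Omega^i,\cF^i,\bF^i,\P^i,X^i,\mu^i,B^i,W^i,\xi^i)$, $i=1,2$, with $\mathscr L^1(\mu^1,B^1,W^1,\xi^1)=\mathscr L^2(\mu^2,B^2,W^2,\xi^2)$. Since each $W^i$ is an $\bF^i$--Brownian motion with $\xi^i$ being $\cF^i_0$--measurable and $(B^i,\mu^i)\indep(W^i,\xi^i)$, one has $W^i\indep(\mu^i,B^i,\xi^i)$ with $W^i$ Wiener, so this hypothesis amounts to saying that $(\mu^i,B^i,\xi^i)$ has a common law $\lambda$. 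The plan is a Girsanov argument in the spirit of Mishura--Veretennikov: strip the drift off $X^i$ so that under the new measure $X^i$ becomes the unique strong solution of the driftless equation \eqref{eq driftless}, and then observe that the whole tuple $(X^i,\mu^i,B^i,W^i,\xi^i)$, together with the Radon--Nikodym density needed to transfer back, is one fixed measurable functional of data whose transformed law is the same for $i=1,2$.

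The first step is to set up the change of measure. Put $\theta^i_s:=\sigma(s,X^i)^{-1}b(s,X^i,\mu^i)\1_{\{s\le T\}}$, which is bounded and $\bF^i$--progressive by Assumption \ref{ass uniq}, and enlarge the filtration to $\cG^i_t:=\cF^{X^i,W^i}_t\vee\cF^{B^i,\mu^i}_\infty$, so that $\cG^i_0=\sigma(\xi^i,\mu^i,B^i)$. The key point to check is that $W^i$ is still a Brownian motion for $\cG^i$; this is where the compatibility built into the weak solution enters, via the independence $(B^i,\mu^i)\indep(W^i,\xi^i)$ and the property that the increments of $W^i$ after time $s$ are independent of $\cF^{B^i,\mu^i}_\infty\vee\cF^{X^i}_s$. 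Granting this, $\theta^i$ is $\cG^i$--progressive, Novikov's criterion holds on $[0,T]$, $Z^i:=\mathcal E\bigl(-\int_0^\cdot\theta^i_s\,dW^i_s\bigr)$ is a true $\cG^i$--martingale, and one defines $\Q^i$ by $d\Q^i/d\P^i:=Z^i_T$ on $\cG^i_\infty$; by Girsanov, $\tilde W^i:=W^i+\int_0^\cdot\theta^i_s\,ds$ is a $\cG^i$--Brownian motion under $\Q^i$.

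Next I would extract the two consequences. First, because $\sigma(\xi^i,\mu^i,B^i)=\cG^i_0$ and $Z^i_0=1$, the law of $(\xi^i,\mu^i,B^i)$ is unchanged by the measure change and remains $\lambda$, while $\tilde W^i$, being a $\cG^i$--Brownian motion, is independent of $\cG^i_0$; hence under $\Q^i$ the quadruple $(\xi^i,\mu^i,B^i,\tilde W^i)$ has law $\lambda\otimes\mathcal W$, the \emph{same} for both solutions. Second, substituting $dW^i=d\tilde W^i-\theta^i\,ds$ into \eqref{eq rsdelem} cancels the drift, so on $[0,T]$ the process $X^i$ solves the driftless SDE \eqref{eq driftless} with driving Brownian motion $(\tilde W^i,B^i)$ (note $\langle\tilde W^i,B^i\rangle=\langle W^i,B^i\rangle=0$) and initial datum $\xi^i$; by the assumed unique strong solvability of \eqref{eq driftless}, pathwise uniqueness forces $X^i_{\cdot\wedge T}=\Phi_T(\xi^i,\tilde W^i,B^i)$ $\Q^i$--a.s.\ for one fixed measurable map $\Phi_T$. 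Invertibility of $\sigma$ makes $\theta^i$ a fixed functional of $(X^i,\mu^i)$, so $W^i=\tilde W^i-\int_0^\cdot\theta^i\,ds$ and $(Z^i_T)^{-1}=\mathcal E\bigl(\int_0^\cdot\theta^i_s\,d\tilde W^i_s\bigr)_T$ are fixed measurable functionals of $(X^i,\mu^i,B^i,\tilde W^i)$, hence of $(\xi^i,\mu^i,B^i,\tilde W^i)$. Then for every bounded measurable $F$,
\[
\E_{\P^i}\bigl[F(X^i_{\cdot\wedge T},\mu^i,B^i,W^i,\xi^i)\bigr]=\E_{\Q^i}\bigl[(Z^i_T)^{-1}\,F(X^i_{\cdot\wedge T},\mu^i,B^i,W^i,\xi^i)\bigr]=\int\Xi\,d(\lambda\otimes\mathcal W)
\]
for a fixed functional $\Xi$ not depending on $i$, which proves the equality of the joint laws.

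The main obstacle is precisely the filtration bookkeeping around the Girsanov step: one must enlarge just enough that $\sigma(\mu^i,B^i,\xi^i)$ lives at time $0$ --- so that the density has conditional mean one there and the law of $(\mu^i,B^i,\xi^i)$ is untouched --- while keeping $W^i$ a Brownian motion in the enlarged filtration and $X^i$ adapted; verifying that $W^i$ is still a Brownian motion for $\cG^i_t=\cF^{X^i,W^i}_t\vee\cF^{B^i,\mu^i}_\infty$ is the technical heart and rests on the compatibility of the solution with $(B^i,\mu^i)$. The remaining ingredients --- Novikov, Girsanov, pathwise uniqueness, and the identification of stochastic integrals with measurable functionals of their data --- are routine, and the infinite-horizon statement for \eqref{eq: MKVSDECNuniq} is left to the proof of Theorem \ref{thm jwkuniq}; here it is enough to argue on each finite $[0,T]$.
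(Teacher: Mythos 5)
Your proof takes essentially the same route as the paper's: a Girsanov change of measure that strips the drift, identification of $X_{\cdot\wedge T}$ with the unique strong solution of the driftless equation driven by $(\xi,\tilde W,B)$, and transfer back via the inverse density, all of which are fixed measurable functionals of data whose transformed law is common to both solutions. You are in fact more explicit than the paper about the filtration enlargement $\cG_t=\cF^{X,W}_t\vee\cF^{B,\mu}_\infty$ needed to justify that the law of $(\xi,\mu,B)$ is unchanged and that $\tilde W$ is independent of it under $\bQ$ — a point the paper asserts directly from $(\mu,B,\xi)\indep W$ — though note that the independence of $W$-increments from $\cF^{B,\mu}_\infty\vee\cF^{X}_s$ which you invoke is not literally part of Definition \ref{def rsdesoln} and is implicitly assumed by the paper's proof as well.
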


\begin{proof}
	Given an arbitrary solution $(X,\mu,B,W,\xi)$ to \eqref{eq rsdelem} on a probability space $(\Omega,\cF,\P)$, with $\nu:=\mathscr L(\mu,B,W,\xi)$, define an equivalent probability measure $\bQ_T$ by $$\frac{d\bQ_T}{d\P}:=\mathcal E_T\bigg (-\int_0^\cdot\sigma^{-1}(s,X)b(s,X,\mu)dW_s\bigg ).$$
	As $(\mu,B,\xi)\indep W$, the tuple $(\mu,B,\xi)$ has the same joint distribution under $\bQ_T$ or $\P$. By Girsanov's Theorem, $\tilde W:=W+\int_0^{\cdot\wedge T} \sigma^{-1}(s,X)b(s,X,\mu)ds$ is a $\bQ_T$-Brownian motion. Therefore, $(\mu,B,\tilde W,\xi)\sim\nu$ under $\bQ_T$. Also, since $X$ satisfies \eqref{eq driftless} on $[0,T]$ under $\bQ_T$, with stochastic input $(B,\tilde W,\xi)$, the process $X_{\cdot\wedge T}$ has a uniquely determined law on $\bQ_T$ since $\eqref{eq driftless}$ has a unique strong solution.
	
	Combining these facts, under $\bQ_T$, $ (X_{\cdot\wedge T},\mu,B,\tilde W,\xi)$ has a joint distribution that does not depend upon the choice of weak solution. This uniquely determines their joint law with $W$ and $\mathcal E_T(\int_0^\cdot\sigma^{-1}(s,Y)b(s,Y,G(U,B))d\tilde W_s)$ under $\bQ_T$.
	
	Since $\P$ and $\bQ_T$ are equivalent, $$\P[(X_{\cdot\wedge T},\mu,B,W,\xi)\in A]=\E_{\bQ_T}\bigg [\frac{d\P}{d\bQ_T}\1_{ (X_{\cdot\wedge T},\mu,B,W,\xi)\in A}\bigg ].$$ 
	Further, since $\frac{d\P}{d\bQ_T}=\big (\frac{d\bQ_T}{d\P}\big )^{-1}$ one can write, 
	\begin{equation}
	\begin{split}
	\frac{d\P}{d\bQ_T} & = \exp\bigg\{\int_0^T\sigma^{-1}(s,X)b(s,X,\mu)dW_s+\frac{1}{2}\int_0^T|\sigma^{-1}(s,X)b(s,X,\mu)|^2ds\bigg\}\\
	& = \exp\bigg\{\int_0^T\sigma^{-1}(s,X)b(s,X,\mu)d\tilde W_s-\frac{1}{2}\int_0^T|\sigma^{-1}(s,X)b(s,X,\mu)|^2ds\bigg\}\\
	& =\mathcal E_T\bigg(\int_0^\cdot\sigma^{-1}(s,X)b(s,X,\mu)d\tilde W_s\bigg).
	\end{split}
	\end{equation}
	Finally, 
	\begin{equation}
	\notag
	\begin{split}
	 \P[(X_{\cdot\wedge T},\mu,B,  W,\xi)\in A]
	= & \E_{\bQ_T}\bigg[\frac{d\P}{d\bQ_T}\1_{ (X_{\cdot\wedge T},\mu,B,W,\xi)\in A}\bigg] \\
	= & \E_{\bQ_T}\bigg[\mathcal E_T\bigg(\int_0^\cdot\sigma^{-1}(s,X)b(s,X,\mu)d\tilde W_s\bigg)\1_{{ (X,\mu,B,\tilde W-\int_0^{\cdot\wedge T}\sigma^{-1}(s,X)b(s,X,\mu)ds,\xi)\in A}} \bigg],
	\end{split}
	\end{equation}	
	which does not depend upon the choice of weak solution. 
	%One can see that for any two solutions $(\Omega^i,\cF^i,\P^i,Y^i,U^i,B^i,W^i)$, the same argument shows that for $i=1,2$. $$\P^1[(Y^1,U^1,B^1,W^1)\in A]=\E_{\bQ^i}[\frac{d\P^i}{d\bQ^i}\1_{ (Y^i,U^i,B^i,W^i)\in A}]=\P^2[(Y^2,U^2,B^2,W^2)\in A].$$
	%Consequently, $\mathscr L^i(X^0_{\cdot\wedge T},U,B,W^{0,i})=\mathscr L(X^i_{\cdot\wedge T},U,B,W^i)$ as desired.
\end{proof}

\subsection{Proof of the Uniqueness Theorem}\label{sec uniqproof} To aid in the reading of this subsection, the strategy is briefly outlined as follows:
 \begingroup
 %\leftskip20em
 \rightskip\leftskip
 \paragraph{Proof Outline}
 \begin{itemize}
 	\leftskip2.6em
 	\item[Steps 1.-2.] Disintegrate the joint distributions of the solutions to identify the underlying randomness behind the flows of conditional distributions ($\mu^1$ and $\mu^2$).%\vspace{-3.5mm}
 	\item[Steps 3.-4.] Introduce a Monge-Kantorovich Problem with a tailored cost function that forces the optimal coupling for this problem to constrain the underlying randomness to be the same for each solution. %\vspace{-3.5mm}
 	%\item Show that in order to compare the distributions of two weak-weak solutions to the McKean-Vlasov SDE with common noise, all one needs to study is the distance of the laws of the processes, and not the distance of the conditional distributions also. 
 	\item[Step 5.] Show that it is possible to represent the distributions of the solutions by a unique solution to the drift-less equation viewed on two probability spaces related by Girsanov transformations. This requires the uniqueness in law to a certain class of SDEs with random coefficients as given by Lemma \ref{lem uniqRSDE}.%\vspace{-3.5mm}
 	\item[Step 6.] For a small time interval, estimate the distance between two processes' distributions by studying the \emph{dual} Kantorovich Problem, showing that for a small time interval, there is uniqueness in joint law.%\vspace{-3.5mm}
 	\item[Step 7.] Conclude by induction.
 \end{itemize} 
 
 \endgroup
 \begin{proof}[Proof of Theorem \ref{thm jwkuniq}]
 	Given two weak solutions to \eqref{eq: MKVSDECNuniq} of the form given by Definition \eqref{def weaksoln},\newline $(X^1,\mu^1,B^1,W^1,\xi^1)$ and $(X^2,\mu^2,B^2,W^2,\xi^2)$, denote the laws of the solutions (with $\xi^i$ hidden inside $X^i$ since $\xi^i=X^i_0$) on their respective probability spaces by $$\mathscr L^1(X^1,\mu^1,B^1,W^1)\text{ and }\mathscr L^2(X^2,\mu^2,B^2,W^2),$$ where the superscript on $\mathscr L$ refers to the fact that these weak solutions may be defined on different probability spaces. In order to compare the distributions of the two solutions, one needs to couple the distributions on a probability space in such a way that fixes the underlying randomness of both $\mu^1$ and $\mu^2$ to be the same. This is done as follows:
 	\begin{enumerate}
 		\leftskip-1.2em
 		\item Disintegrate the joint distributions of the two solutions (see Chapter 10 in volume II of \cite{bogachevMeasureTheoryBook}) into the joint distributions of $(\mu^i,B^i,W^i)$ and the conditional distribution of $X^i$ \emph{given} $\mu^i,B^i,W^i$. This is written as 
 		$$\mathscr L^i(X^i,\mu^i,B^i,W^i)=p_X^i(dx,\mu,b,w)\mathscr L^i(\mu^i,B^i)(d\mu,db) \mathscr L^i(W^i)(dw),$$
 		using the independence of $W^i$ and $(\mu^i,B^i)$. 
 		\item From Blackwell and Dubins \cite{BlackwellDubins}, there exists for each $i\in\{1,2\},$ a measurable function $G^i:[0,1]\times C(I;\mathbb R^{d_B})\rightarrow C(I;\cP(\cC))$, such that, if on some probability space there are elements $U,B$ such that $U\sim \text{Unif}(0,1)=:\lambda$, $B\sim\mathscr L^i(B^i)$ and $U\indep B$, then $$\mathscr L(G^i(U,B),B)=\mathscr L^i(\mu^i,B^i).$$
 		Note that the functions $G^i$ cannot be claimed to be \emph{adapted} in the sense that, if for $b^1,b^2\in C(I;\mathbb R^{d_B})$ such that $b^1_{\cdot\wedge t}=b^2_{\cdot\wedge t}$ for some $t\in I$, then $G^i(u,b^1)_t=G^i(u,b^2)_t$. This is shown in Example 5.3 of \cite{lackerdensesets}.
 		
 		Letting $\mathcal W_d$ denote Wiener measure on $C(I;\mathbb R^d)$, consider for $i\in\{1,2\}$, $$\pi^i:=p^i_X(dx,\mu,b,w)\delta_{G^i(u,b)}(d\mu) \lambda(du)\mathcal W_{d_B}(db)\mathcal W_{d_W}(dw).$$ %The superscript $i$ can be dropped from $\mathscr L^i(U,B,W)$ since it is the same for both $i=1,2$.
 		Equipping the space $E:=(\cC \times C(I;\cP(\cC)) \times [0,1] \times C(I;\mathbb R^{d_B}) \times C(I;\mathbb R^{d_W}))$ and its product $\sigma$-algebra with the measure $\pi^i$, the canonical random elements $(X,\mu,U,B,W)$ are such that $(X,\mu,B,W)$ have distribution $\mathscr L^i(X^i,\mu^i,B^i,W^i)$.% and moreover $\mu_t=G^i(U,B)_t=\mathscr L(X_{\cdot \wedge t}|\cF^{\mu,B})$ for all $t\in I$. 
 		
 		Further, for $i\in\{1,2\}$, introduce the measure $$\pi^i_X:=p^i_X(dx,G^i(u,b),b,w)  \lambda(du)\mathcal W_{d_B}(db)\mathcal W_{d_W}(dw).$$ One can equip the product space $E^*:=(\cC \times [0,1] \times C(I;\mathbb R^{d_B}) \times C(I;\mathbb R^{d_W}))$ (with product $\sigma$-algebra denoted $\cB(E^*)$) with $\pi^i_X$ and define $\mu:=G^i(U,B)$. Then, the canonical random elements $X,U,B,W$ along with $\mu$ satisfy again, $\mathscr L^{\pi^i_X}(X,\mu,B,W)= \mathscr L^i(X^i,\mu^i,B^i,W^i)$ and consequently, denoting $(\Omega,\cF,\P):=(E^*,\cB(E^*),\pi^i_X)$, for any $A\in \cB(\cC)$ and bounded measurable $f:C(I;\cP(\cC))\times C(I;\mathbb R^{d_B})\rightarrow \bR$,
 		\begin{equation}\label{eq Gcnd}
 		\begin{split}
 		\E[G^i(U,B)_t(A)f(G^i(U,B),B)]= & \E^i[\mu^i_t(A)f(\mu^i,B^i)]\\
 		= & \E^i[\1_{A}(X^i_{\cdot\wedge t})f(\mu^i,B^i)]\\
 		= & \E[\1_A(X_{\cdot\wedge t})f(G^i(U,B),B)].\\
 		\end{split}
 		\end{equation}	
 		Hence, $\mu_t=G^i(U,B)_t=\mathscr L(X_{\cdot \wedge t}|G^i(U,B),B)=\mathscr L(X_{\cdot\wedge t}|\mu,B)$ for all $t\in I$. An important observation is that, since $X$ is independent of $U$ given $\sigma(G^i(U,B),B)$, $\mu_t=\mathscr L(X_{\cdot\wedge t}|U,B)$ for all $t\in I$.
 		\item On the product space $E^*\times E^*$, define the lower semi-continuous cost function 
 		\begin{equation}
 		c^*((x^1,u^1,b^1,w^1),(x^2,u^2,b^2,w^2)):= 
 		\begin{cases}
 		\1_{x^1\neq x^2}+d(w^1,w^2)\wedge 1\\
 		\quad\quad\,\,\text{ if }(u^1,b^1)=(u^2,b^2),\\
 		\infty  \quad\,\text{ otherwise.}\\
 		\end{cases}
 		%	
 		%	\begin{split}
 		%	c^*((x^1,u^1,b^1,w^1),(x^2,u^2,b^2,w^2)) & :=\1_{x^1\neq x^2}+d(w^1,w^2)\wedge 1\,\,\,\,\,\text{ if }(u^1,b^1)=(u^2,b^2),\\
 		%	 & := \infty\hspace{37mm}\text{ otherwise.}
 		%	\end{split}
 		\end{equation}
 		where $d$ is the uniform metric on $C(I;\mathbb R^{d_W})$. Let $W^*$ be the Monge-Kantorovich Problem (see Chapters 4 and 5 in \cite{villani2009}) with cost function $c^*$:
 		\begin{equation}\label{eq W^*}
 		\begin{split}
 		W^*(\pi^1_X,\pi^2_X):=\inf_{\pi: \,\pi \text{ couples }\pi^1_X,\pi^2_X}\int_{E^*\times E^*}c^*d\pi. \\
 		\end{split}
 		\end{equation}
 		There exists an optimal coupling for this problem (a coupling minimizing the expected cost $\int c^*d\pi$) since the $c^*$ is lower semi-continuous, see \cite{villani2009}, Theorem 4.1.
 		If $W^*(\pi^1_X,\pi^2_X)=0$, then one can conclude $\pi^1_X=\pi^2_X$ since the cost function $c^*((x^1,u^1,b^1,w^1),(x^2,u^2,b^2,w^2))=0$ if and only if $(x^1,u^1,b^1,w^1)=(x^2,u^2,b^2,w^2)$. Further, on the optimal coupling from \eqref{eq W^*}, following the argument  behind equation \eqref{eq Gcnd}, $$G^1(U,B)_t=\mathscr L(X^1_{\cdot\wedge t}|U,B)=\mathscr L(X^2_{\cdot\wedge t}|U,B)=G^2(U,B)_t,$$ almost surely for all $t\in I$, which by the continuity of sample paths of $G^i(U,B)$ is enough to claim that $G^1(U,B)$ and $G^2(U,B)$ are almost surely equal. It will consequently be the aim to show $W^*(\pi^1_X,\pi^2_X)=0$ for any two solutions to \eqref{eq: MKVSDECNuniq}. 
 		
 		First, note that by the gluing lemma there exists a probability space $(\tilde \Omega,\tilde \cF,\tilde \P)$ on which there are random elements $\tilde X^1,\tilde X^2,\tilde U,\tilde B,\tilde W^1,\tilde W^2$ with $\tilde{\mathscr L}(\tilde X^i,\tilde U,\tilde B,\tilde W^i)=\pi^i_X$.
 		It is easy to see that
 		\begin{equation}\notag
 		\begin{split}
 		W^*(\pi^1_X,\pi^2_X)\leq &  \tilde \E[c^*((\tilde X^1,\tilde U,\tilde B,\tilde W^1),(\tilde X^2,\tilde U,\tilde B,\tilde W^2))]\\
 		= & \tilde \E[\1_{\tilde X^1\neq \tilde X^2}+d(\tilde W^1,\tilde W^2)\wedge 1] \\
 		\leq & 2. \\
 		%	< & \infty.\\
 		\end{split}
 		\end{equation}
 		On the other hand, for any coupling of $\pi^1$ and $\pi^2$ such that $\P[(U^1,B^1)\neq (U^2,B^2)]>0$, the quantity $\E[c^*((X^1, U^1,B^1,W^1),(X^2, U^2,B^2,W^2))]=\infty $. Therefore, the infimum (that is attained by some optimal coupling) in $W^*$ may be taken over all couplings ensuring $\P[(U^1,B^1)\neq (U^2,B^2)]=0$. By completing the probability space, it can be assumed that for the optimal coupling, $(U^1,B^1)= (U^2,B^2)$ surely and the superscripts will consequently be dropped. %Tildes are used to denote such couplings.% Also, there exists an optimal coupling $\tilde \P^*$ amongst the set of couplings $\{\tilde \P\}$. 
 	\end{enumerate}
 	To show that $W^*(\pi^1_X,\pi^2_X)=0$, it will first be shown that $W^*=0$ for solutions restricted to a short time interval. Define $p^i_{X,T}$ as the image of $p^i_X$ through the map $\cC\ni x\mapsto x_{\cdot\wedge T}\in \cC$. Then, defining 
 	$$\pi^i_{X,T}:=p^i_{X,T}(dx,G^i(u,b),b,w) \lambda(du)\mathcal W_{d_B}(db)\mathcal W_{d_B}(dw),$$
 	see that for $E^*$ equipped with $\pi^i_{X,T}$, and again defining $\mu^i:=G^i(U,B)$, the elements $X,\mu,B,W$ have distribution $\mathscr L^i(X^i_{\cdot\wedge T},\mu^i,B^i,W^i)$. It will be shown that for some small $T$, $W^*_T:=W^*(\pi^1_{X,T},\pi^2_{X,T})=0$ by representing the two measures via Girsanov transformations from the optimal coupling for $W^*_T$. Then, by repeating the argument, $W^*(\pi^1_X,\pi^2_X)=0$ will be established by induction on intervals $[0,kT]$. The optimal coupling for $W^*_T$, denoted $\P$ henceforth, satisfies $X^i=X^i_{\cdot\wedge T}$ and for all $t\leq T$,
 	\begin{equation}\label{eq muWest}
 	\begin{split}
 	\E[d_{TV}(\mu^1_t, \mu^2_t)] \leq   \E[ \E[\1_{ X^1_{\cdot\wedge t}\neq  X^2_{\cdot\wedge t}}|\cF^{B, U}]]=  \E[  \1_{ X^1_{\cdot\wedge t}\neq  X^2_{\cdot\wedge t}} ]\leq &  \E[  \1_{ X^1_{\cdot\wedge T}\neq  X^2_{\cdot\wedge T}} ]\\
 	& = W^*(\pi_{X,T}^1,\pi_{X,T}^2).\\
 	\end{split}
 	\end{equation}
 	The following argument shows that for small $T$, $W^*_T=0$:
 	\begin{enumerate}\addtocounter{enumi}{3}
 		\leftskip-1.2em
 		\item By the Kantorovich Duality (see Theorem 5.10 in \cite{villani2009}), the primal and dual Kantorovich problems for $c^*$ satisfy,
 		\begin{equation}\label{eq wasscomp1}
 		\begin{split}
 		W^*& (\pi_{X,T}^1,\pi_{X,T}^2)\\
 		= & \sup_{h\text{ $c^*$-convex}}\bigg ( \int h(x,u,b,w)(\pi^1_{X,T}- \pi^2_{X,T})(dx,du,db,dw)\bigg )\\
 		= & \sup_{h\text{ $c^*$-convex}} \E [ h(X^1,U,B,W^1)-h(X^2,U,B,W^2)] \\
 		\end{split}
 		\end{equation}
 		The second equality holds since $\P$ is a coupling of $\pi^1_{X,T}$ and $\pi^2_{X,T}$. 
 		The definition of $c^*$ convexity, can be found in \cite{villani2009} p.54, but for the purposes here it will suffice to consider the equivalence that, since $c^*$ satisfies the triangle inequality, $h$ is $c^*$-convex iff 
 		\begin{equation}
 		\label{eq equivconvex}
 		h(x^1,u^1,b^1,w^1)-h(x^2,u^2,b^2,w^2)\leq c^*((x^1,u^1,b^1,w^1),(x^2,u^2,b^2,w^2)).
 		\end{equation}
 		It will be necessary to consider an alternative, but equivalent supremum in the right hand side of Equation \eqref{eq wasscomp1}, where one is able to assume that all functions $h$ in the supremum are non-negative and bounded. This will be arrived at by the subsequent argument. 
 		
 		By the characterisation of $c^*$-convex functions, \eqref{eq equivconvex}, for arbitrary but fixed $x'\in\cC$ and $w'\in C(I;\bR^{d_W})$, mapping every $c^*$-convex function $h$ to a new $c^*$-convex function $h'$ such that $$h'(x,u,b,w):=h(x,u,b,w)-h(x',u,b,w')\leq c^*((x,u,b,w),(x',u,b,w')),$$ one can see that since $c^*$ is symmetric, $|h'|\leq 2$. Finally, setting $h'':=h'+2$ (again $h''$ is $c^*$-convex), see that for every $c^*$-convex $h$, 
 		\begin{equation}\notag
 		\begin{split}
 		& \E [ h(X^1,U,B,W^1)-h(X^2,U,B,W^2)]\\
 		&=\E [ h''(X^1,U,B,W^1)-h''(X^2,U,B,W^2)]
 		\end{split}
 		\end{equation}
 		and $h''$ is $[0,4]$ valued. Therefore, by sending every $h$ to its corresponding $h''$,
 		\begin{equation}\label{eq wasscomp}
 		\begin{split}
 		\hspace{-8mm}W^*& (\pi_{X,T}^1,\pi_{X,T}^2) =  \sup_{h:E^*\to[0,4],\text{ $c^*$-convex }} \E [ h(X^1,U,B,W^1)-h(X^2,U,B,W^2)]. \\ 
 		\end{split}
 		\end{equation}
 		% Then, since the final three elements in this coupling are fixed, the supremum is over functions that are $1-$Lipschitz continuous on $\cC$ with the discrete metric, i.e. functions bounded by $1/2$. The value of the supremum is the same however, if the supremum is instead taken over functions $h+1/2$, resulting in the final equality.
 		\item Now, on the optimal probability space $(\Omega,\cF,\P)$, enlarged to include another Brownian motion $W^0$ (this is not necessary, since one could use $W^1$ or $W^2$ in place of $W^0$, but arguably this eases notation), there is a strong solution $X^0$ to the driftless equation \eqref{eq driftless} by Assumption \ref{ass uniq}. Indeed, there is a process $X^0$ such that 
 		\begin{equation}\notag
 		dX^0_t=\sigma(t,X^0)dW^0_t+\rho(t,X^0)dB_t.
 		\end{equation}
 		In order to estimate the right hand side of \eqref{eq wasscomp}, it is critical to represent the distributions of $X^i_{\cdot\wedge T}$ by the distributions of $X^0_{\cdot\wedge T}$ under suitable Girsanov transformations. For each $i=1,2$, define measures $\bQ^{i}\sim \P$ by 
 		\begin{equation}\label{eq girsanov}
 		\begin{split}
 		\frac{d\bQ^i}{d\P}&%:=\exp\left\{ {\int_0^T}\sigma^{-1}(s,X^0)b(s,X^0,\mu^i)dW^0_s-\frac{1}{2}\int_0^T|\sigma^{-1}(s,X^0)b(s,X^0,\mu^i)|^2ds\right\}\\
 		%& \,=:
 		:= \mathcal E\bigg ( \int_0^{\cdot\wedge T}\sigma^{-1}(s,X^0)b(s,X^0,\mu^i)dW^0_s\bigg )_{\infty}.
 		\end{split}
 		\end{equation}
 		$\mathcal E(M)_t$ denotes the Dol\'eans-Dade exponential of $M$ at time $t$, $\mathcal E(M)_t:=\exp \{M_t-\frac{1}{2}[M]_t\}$. These changes of probability measure are well defined due to the assumption of boundedness of $\sigma^{-1}b$. By Girsanov's Theorem, $W^{0,i}:=W^0-\int_0^{\cdot\wedge T}\sigma^{-1}(s,X^0)b(s,X^0,\mu^i)ds$ is a $\bQ^i$ Brownian motion on $I$, and on $[0,T]$ and for each $i=1,2$,
 		$$dX^0_t=b(t,X^0,\mu^i)dt+\sigma(t,X^0)dW^{0.i}_t+\rho(t,X^0)dB_t.$$  
 		It is now claimed that, $\mathscr L^i(X^0_{\cdot\wedge T},U,B,W^{0,i})=\mathscr L(X^i_{\cdot\wedge T},U,B,W^i)$, where $\mathscr L^i$ denotes the law on $\bQ^i$ (and continues to do so for the remainder of the proof). % as desired. $\mathscr L^i(X^0_{\cdot\wedge T} )= \mathscr L(X^1_{\cdot\wedge T})$. 
 		%$\mathscr L^i(X^0_{\cdot\wedge T},U,B_{\cdot\wedge T},W^i_{\cdot\wedge T})= \mathscr L(X^1_{\cdot\wedge T},U,B_{\cdot\wedge T},W_{\cdot\wedge T})$.
 		This follows from the uniqueness in joint law on $[0,T]$ for solutions for SDEs with random coefficients of the form:
 		\begin{equation}\label{eq rsde}
 		dY_t=b(t,Y,\mu)dt+\sigma(t,Y)dW_t+\rho(t,Y)dB_t,
 		\end{equation}
 		where the joint distribution of $(\mu,B,W)$ is determined. This uniqueness is given by Lemma \ref{lem uniqRSDE}, which is stated and proved at the end of the current proof.

 		\item Recalling the equation \eqref{eq wasscomp}, and the two equivalent probability spaces $\bQ^1$ and $\bQ^2$,
 		\begin{equation}\label{eq wasscompRN}
 		\hspace{-9mm}\begin{split}
 		& W^*(\pi_{X,T}^1,\pi_{X,T}^2)\\
 		= & \sup_{h:E^*\to[0,4],\text{ c-convex }} \E [ h(X^1,U,B,W^1)-h(X^2,U,B,W^2)]\\
 		= &\sup_{h:E^*\to[0,4],\text{ c-convex }} \E^1 [h(X^0_{\cdot\wedge T},U,B,W^{0,1})]-\E^2[h(X^0_{\cdot\wedge T},U,B,W^{0,2})] \\
 		= & \sup_{h:E^*\to[0,4],\text{ c-convex }} \E\bigg[\frac{d\bQ^1}{d\P}h(X^0_{\cdot\wedge T},U,B,W^{0,1})-\frac{d\bQ^2}{d\P}h(X^0_{\cdot\wedge T},U,B,W^{0,2})\bigg] \\
 		= & \sup_{h:E^*\to[0,4],\text{ c-convex }} \bigg \{ \E\bigg[\frac{d\bQ^1}{d\P}\bigg(h(X^0_{\cdot\wedge T},U,B,W^{0,1})-h(X^0_{\cdot\wedge T},U,B,W^{0,2})\bigg)\bigg]\\
 		& \quad\quad\quad\quad\quad\quad\quad\quad+ \E\bigg[\bigg (\frac{d\bQ^1}{d\P}-\frac{d\bQ^2}{d\P}\bigg )h(X^0_{\cdot\wedge T},U,B,W^{0,2})\bigg]\bigg \} \\
 		\end{split}
 		\end{equation} 
 		The right hand side of \eqref{eq wasscompRN} will be estimated as follows:
 		\begin{equation}\notag%\label{eq expocomp}
 		\hspace{-9mm}\begin{split}
 		& \sup_{h:E^*\to[0,4],\text{ c-convex }} \bigg \{ \E\bigg[\frac{d\bQ^1}{d\P}\bigg(h(X^0_{\cdot\wedge T},U,B,W^{0,1})-h(X^0_{\cdot\wedge T},U,B,W^{0,2})\bigg)\bigg]\\
 		& \quad\quad\quad\quad\quad\quad\quad\quad +  \E\bigg[\bigg(\frac{d\bQ^1}{d\P}-\frac{d\bQ^2}{d\P}\bigg)h(X^0_{\cdot\wedge T},U,B,W^{0,2})\bigg]\bigg \} \\
 		\leq & \sup_{h:E^*\to[0,4],\text{ c-convex }} \E^1[ (h(X^0_{\cdot\wedge T},U,B,W^{0,1})-h(X^0_{\cdot\wedge T},U,B,W^{0,2}))]\\
 		& +\sup_{h:E^*\to [0,4],\text { measurable}} \E^1\bigg[\bigg(1-\frac{d\bQ^2}{d\P^1}\bigg)h(X^0_{\cdot\wedge T},U,B,W^{0,2})\bigg] \\
 		 		\end{split}
 		\end{equation}
 		\begin{equation}\label{eq expocomp}
 		\begin{split}
 		\leq & \E^1[ d(W^{0,1},W^{0,2})\wedge 1]  +4 \E^1\bigg[\bigg(1-\frac{d\bQ^2}{d\bQ^1}\bigg)  \1_{\frac{d\bQ^2}{d\bQ^1}<1}   \bigg] \\
 		\leq & \E^1[ d(W^{0,1},W^{0,2}) ]  +4 \E^1\bigg[\bigg|1-\frac{d\bQ^2}{d\bQ^1}\bigg|  \1_{\frac{d\bQ^2}{d\bQ^1}<1}   \bigg] \\
 		%
 		% & \E[(1-\frac{d\P^2}{d\P^1})h(X^0_{\cdot\wedge T})\frac{d\P^1}{d\P}]\\
 		%= & \E^1[(1-\frac{d\P^2}{d\P^1})h(X^0_{\cdot\wedge T})]\\
 		%\leq & E^1[(1-\frac{d\P^2}{d\P^1})\1_{\frac{d\P^2}{d\P^1}<1}]\\
 		%\leq & E^1[|1-\frac{d\P^2}{d\P^1} |\1_{\frac{d\P^2} {d\P^1}<1}].
 		\end{split}
 		\end{equation}
 		%The first inequality holds since the functions $h$ map to $[0,1]$.
 		%\item It is the penultimate aim, to show that on some small time interval $[0,T]$, joint weak uniqueness holds. I.e. $\mathscr L(X^1_{\cdot\wedge T},U,B_{\cdot\wedge T},W_{\cdot\wedge T})=\mathscr L(X^2_{\cdot\wedge T},U,B_{\cdot\wedge T},W_{\cdot\wedge T})$. 
 		Recalling the definitions of $W^i$ and the form of $\frac{d\bQ^1}{d\P}$ and $\frac{d\bQ^2}{d\P}$ from \eqref{eq girsanov}, $\frac{d\bQ^2}{d\bQ^1}$ can be rewritten as follows:
 		\begin{equation}
 		\hspace{-9mm}\begin{split}
 		\frac{d\bQ^2}{d\bQ^1}  = & \exp\bigg\{ \int_0^T\sigma^{-1}(s,X^0)b(s,X^0,\mu^2)dW^{0}_s- \int_0^T\sigma^{-1}(s,X^0)b(s,X^0,\mu^1)dW^{0}_s\\
 		& + \frac{1}{2}\int_0^T|\sigma^{-1}(s,X^0)b(s,X^0,\mu^1)|^2 ds -\frac{1}{2}\int_0^T|\sigma^{-1}(s,X^0) b(s,X^0,\mu^2)|^2 ds \bigg\}   \\
 		= &  \exp\bigg\{ \int_0^T\sigma^{-1}(s,X^0)b(s,X^0,\mu^2)dW^{0,1}_s- \int_0^T\sigma^{-1}(s,X^0)b(s,X^0,\mu^1)dW^{0,1}_s\\
 		& - \frac{1}{2}\int_0^T|\sigma^{-1}(s,X^0)b(s,X^0,\mu^1)|^2 ds -\frac{1}{2}\int_0^T|\sigma^{-1}(s,X^0) b(s,X^0,\mu^2)|^2 ds  \\
 		& +\int_0^T\sigma^{-1}(s,X^0)b(s,X^0,\mu^1)\cdot \sigma^{-1}(s,X^0)b(s,X^0,\mu^2)ds \bigg\} \\
 		= & \exp\bigg\{ -\int_0^T\sigma^{-1}(s,X^0)b(s,X^0,\mu^1)- \sigma^{-1}(s,X^0)b(s,X^0,\mu^2)dW^{0,1}_s\\
 		& - \frac{1}{2} \int_0^T|\sigma^{-1}(s,X^0)b(s,X^0,\mu^1)- \sigma^{-1}(s,X^0)b(s,X^0,\mu^2)|^2ds \bigg\}.\\
 		\end{split}
 		\end{equation} 
 		Now, on the event $\frac{d\bQ^2}{d\bQ^1}<1$, 
 		\begin{equation}\notag
 		\begin{split}
 		& \exp\bigg\{ -\int_0^T\sigma^{-1}(s,X^0)b(s,X^0,\mu^1)- \sigma^{-1}(s,X^0)b(s,X^0,\mu^2)dW^{0,1}_s\\
 		& - \frac{1}{2} \int_0^T|\sigma^{-1}(s,X^0)b(s,X^0,\mu^1)- \sigma^{-1}(s,X^0)b(s,X^0,\mu^2)|^2ds \bigg\} <1 .\\
 		\end{split}
 		\end{equation}
 		%	This implies that
 		%	\begin{equation}\notag
 		%	\begin{split}
 		%	&   -\int_0^T\sigma^{-1}(s,X^0)b(s,X^0,\mu^1)- \sigma^{-1}(s,X^0)b(s,X^0,\mu^2)dW^{0,1}_s\\
 		%	& - \frac{1}{2} \int_0^T|\sigma^{-1}(s,X^0)b(s,X^0,\mu^1)- \sigma^{-1}(s,X^0)b(s,X^0,\mu^2)|^2ds <0 .\\
 		%	\end{split}
 		%	\end{equation}
 		Since for all $x\leq 0$ (i.e. $e^x<1$), $|1-e^x|\leq |x|$,
 		\begin{equation}\notag
 		\begin{split}
 		 \E^1\bigg[\bigg|1-\frac{d\bQ^2}{d\bQ^1}  \bigg|\1_{\frac{d\bQ^2} {d\bQ^1}<1}\bigg] \leq & \E^1\bigg[\bigg|   -\int_0^T\sigma^{-1}(s,X^0)b(s,X^0,\mu^1)- \sigma^{-1}(s,X^0)b(s,X^0,\mu^2)dW^{0,1}_s\\
 		& - \frac{1}{2} \int_0^T|\sigma^{-1}(s,X^0)b(s,X^0,\mu^1)- \sigma^{-1}(s,X^0)b(s,X^0,\mu^2)|^2ds  \bigg|\1_{\frac{d\bQ^2} {d\bQ^1}<1}\bigg]\\
 		\leq & \E^1\bigg[\bigg|   \int_0^T\sigma^{-1}(s,X^0)b(s,X^0,\mu^1)- \sigma^{-1}(s,X^0)b(s,X^0,\mu^2)dW^{0,1}_s\bigg|\\
 		& +  \frac{1}{2} \int_0^T|\sigma^{-1}(s,X^0)b(s,X^0,\mu^1)- \sigma^{-1}(s,X^0)b(s,X^0,\mu^2)|^2ds  \bigg]\\
 		\leq & \E^1\bigg[\sup_{t\leq T}\bigg|   \int_0^t\sigma^{-1}(s,X^0)b(s,X^0,\mu^1)- \sigma^{-1}(s,X^0)b(s,X^0,\mu^2)dW^{0,1}_s\bigg|\bigg]\\
 		& +  \frac{1}{2}  \E^1\bigg[ \int_0^T|\sigma^{-1}(s,X^0)b(s,X^0,\mu^1)- \sigma^{-1}(s,X^0)b(s,X^0,\mu^2)|^2ds  \bigg].\\
 		\end{split}
 		\end{equation} 
 		Applying the Burkh\"older-Davis-Gundy inequality (the corresponding constant denoted $c_{\text{\tiny BDG}}$),
 		\begin{equation}\notag
 		\hspace{-8mm}\begin{split}
 		& \E^1\bigg[\bigg|1-\frac{d\bQ^2}{d\bQ^1}  \bigg|\1_{\frac{d\bQ^2} {d\bQ^1}<1}\bigg] \\
 		\leq & c_{\text{\tiny BDG}} \E^1\bigg[   \bigg(\int_0^T|\sigma^{-1}(s,X^0)b(s,X^0,\mu^1)- \sigma^{-1}(s,X^0)b(s,X^0,\mu^2)|^2ds\bigg)^\frac{1}{2}\bigg]\\
 		& +  \frac{1}{2} \E^1\bigg[  \int_0^T|\sigma^{-1}(s,X^0)b(s,X^0,\mu^1)- \sigma^{-1}(s,X^0)b(s,X^0,\mu^2)|^2ds  \bigg]\\
 		\end{split}
 		\end{equation} 
 		Now, using the assumption of total variation Lipschitz continuity of $\sigma^{-1}b$ in the measure component,
 		\begin{equation}
 		\hspace{-8mm}\begin{split}
 		& c_{\text{\tiny BDG}} \E^1\bigg[   \bigg(\int_0^T|\sigma^{-1}(s,X^0)b(s,X^0,\mu^1)- \sigma^{-1}(s,X^0)b(s,X^0,\mu^2)|^2ds\bigg)^\frac{1}{2}\bigg]\\
 		& +  \frac{1}{2} \E^1\bigg[  \int_0^T|\sigma^{-1}(s,X^0)b(s,X^0,\mu^1)- \sigma^{-1}(s,X^0)b(s,X^0,\mu^2)|^2ds  \bigg]\\
 		\leq   & c_{\text{\tiny BDG}} c_{\text{\tiny TV}} \E^1\bigg[   \bigg(\int_0^T d_{\text{\tiny TV}}(\mu^1_s,\mu^2_s)^2ds\bigg)^\frac{1}{2}\bigg]+  \frac{1}{2}c_{\text{\tiny TV}}^2  \E^1\bigg[  \int_0^Td_{\text{\tiny TV}}(\mu^1_s,\mu^2_s)^2 ds  \bigg].\\
 		\end{split}
 		\end{equation}
 		And since for all $s\leq T$, $d_{\text{\tiny TV}}(\mu^1_s,\mu^2_s)\leq d_{\text{\tiny TV}}(\mu^1_T,\mu^2_T)$, 
 		\begin{equation}\notag
 		\hspace{-10mm}\begin{split}
 		& \E^1\bigg[\bigg|1-\frac{d\P^2}{d\P^1} \bigg|\1_{\frac{d\P^2} {d\P^1}<1}\bigg] \\
 		\leq   & c_{\text{\tiny BDG}} c_{\text{\tiny TV}} T^\frac{1}{2} \E^1[      d_{\text{\tiny TV}}(\mu^1_T,\mu^2_T) ]+  \frac{1}{2}c_{\text{\tiny TV}}^2T  \E^1[  d_{\text{\tiny TV}}(\mu^1_T,\mu^2_T)^2  ]\\
 		=   & c_{\text{\tiny BDG}} c_{\text{\tiny TV}} T^\frac{1}{2} \E[      d_{\text{\tiny TV}}(\mu^1_T,\mu^2_T) ]+  \frac{1}{2}c_{\text{\tiny TV}}^2T  \E[  d_{\text{\tiny TV}}(\mu^1_T,\mu^2_T)^2  ]\\
 		\leq    & c_{\text{\tiny BDG}} c_{\text{\tiny TV}} T^\frac{1}{2} \E[  \E[\1_{X^1_{\cdot \wedge T}\neq X^2_{\cdot \wedge T}}|U,B]  ]+  \frac{1}{2}c_{\text{\tiny TV}}^2T  \E[  \E[\1_{X^1_{\cdot \wedge T}\neq X^2_{\cdot \wedge T}}|U,B]^2  ]\\
 		\leq    & ( c_{\text{\tiny BDG}} c_{\text{\tiny TV}} T^\frac{1}{2}  +  \frac{1}{2}c_{\text{\tiny TV}}^2T ) \E[  \E[\1_{X^1_{\cdot \wedge T}\neq X^2_{\cdot \wedge T}}|U,B]   ]\\
 		=  & ( c_{\text{\tiny BDG}} c_{\text{\tiny TV}} T^\frac{1}{2}  +  \frac{1}{2}c_{\text{\tiny TV}}^2T ) \P[ {X^1_{\cdot \wedge T}\neq X^2_{\cdot \wedge T}}   ]\\
 		=  & ( c_{\text{\tiny BDG}} c_{\text{\tiny TV}} T^\frac{1}{2}  +  \frac{1}{2}c_{\text{\tiny TV}}^2T ) W^*(\pi^1_{X,T},\pi^2_{X,T}).\\
 		\end{split}
 		\end{equation}
 		Similarly, for $\E^1[ d(W^{0,1},W^{0,2}) ] $, one estimates
 		\begin{equation}
 		\notag
 		\hspace{-10mm}\begin{split}
 		& \E^1[ d(W^{0,1},W^{0,2}) ]\\
 		 \leq &  \E^1\bigg[\sup_{t\leq T}\bigg|\int_0^t\sigma^{-1}(s,X^0)b(s,X^0,\mu^1)- \sigma^{-1}(s,X^0)b(s,X^0,\mu^2)ds\bigg|\bigg]\\
 		\leq & c_{\text{\tiny TV}}T\E^1[ d_{\text{\tiny TV}}(\mu^1_T,\mu^2_T) ]\\
 		\leq & c_{\text{\tiny TV}}TW^*(\pi^1_{X,T},\pi^2_{X,T}).\\
 		\end{split}
 		\end{equation}
 		Putting the above two estimates together with \eqref{eq expocomp},
 		$$\hspace{-8mm}W^*(\pi^1_{X,T},\pi^2_{X,T})\leq (c_{\text{\tiny TV}}T+4 ( c_{\text{\tiny BDG}} c_{\text{\tiny TV}} T^\frac{1}{2}  +  \frac{1}{2}c_{\text{\tiny TV}}^2T ))W^*(\pi^1_{X,T},\pi^2_{X,T}).$$
 		Hence, choosing $T$ small enough such that $ c_{\text{\tiny TV}}T+4( c_{\text{\tiny BDG}} c_{\text{\tiny TV}} T^\frac{1}{2}  +  \frac{1}{2}c_{\text{\tiny TV}}^2T )=\alpha<1$, one has 
 		$$W^*(\pi^1_{X,T},\pi^2_{X,T})\leq \alpha W^*(\pi^1_{X,T},\pi^2_{X,T}).$$
 		This implies that $W^*(\pi^1_{X,T},\pi^2_{X,T})=0$. Importantly, this further implies that almost surely, $G^1(U,B)_{\cdot \wedge T}=G^2(U,B)_{\cdot \wedge T}$. Indeed, since $G^i(U,B)_t=\mu^i_t=\mathscr L(X^i_{\cdot\wedge t}|U,B)$, for any $t\leq T$, and any $A\in \cB(\cC)$,
 		\begin{equation}\notag
 		\begin{split}
 		\E[\mu^1_t(A)f(U,B)]=\E[\1_A(X^1_{\cdot\wedge t})f(U,B)]=& \E[\1_A(X^2_{\cdot\wedge t})f(U,B)]\\
 		=&  \E[\mu^2_{\cdot\wedge t}(A)f(U,B)].
 		\end{split}
 		\end{equation}
 		This means that the distribution of $(G^1(U,B)_{\cdot\wedge T},G^2(U,B)_{\cdot\wedge T})$ is concentrated on the diagonal (and will be on any probability space supporting $(U,B)$ with the same distribution). 
 		\item The result of the proof will follow by an inductive argument. Assume that for some $k\in\bN$ $W^*(\pi^1_{X,kT},\pi^2_{X,kT})=0$, then repeating the above argument for $\pi^1_{X,(k+1)T}$ and $\pi^2_{X,(k+1)T}$, then, since $\mu^1=\mu^2$ almost surely on $[0,kT]$,
 		\begin{equation}\notag
 		\hspace{8mm}\begin{split}
 		& W^*(\pi_{X,(k+1)T}^1,\pi_{X,(k+1)T}^2)  \\
  		\leq   & 4c_{\text{\tiny BDG}} c_{\text{\tiny TV}} \E^1\bigg[ \bigg(\int_0^{(k+1)T} d_{\text{\tiny TV}}(\mu^1_s,\mu^2_s)^2ds\bigg)^\frac{1}{2}\bigg] + 4 \frac{1}{2}c_{\text{\tiny TV}}^2  \E^1\bigg[  \int_0^{(k+1)T} d_{\text{\tiny TV}}(\mu^1_s,\mu^2_s)^2 ds  \bigg]\\
 		& + c_{\text{\tiny TV}}\E^1\bigg[  \int_0^{(k+1)T} d_{\text{\tiny TV}}(\mu^1_s,\mu^2_s) ds  \bigg]\\
 		= &  4c_{\text{\tiny BDG}} c_{\text{\tiny TV}} \E^1\bigg[   (\int_{kT}^{(k+1)T} d_{\text{\tiny TV}}(\mu^1_s,\mu^2_s)^2ds)^\frac{1}{2}\bigg] + 4 \frac{1}{2}c_{\text{\tiny TV}}^2  \E^1\bigg[  \int_{kT}^{(k+1)T} d_{\text{\tiny TV}}(\mu^1_s,\mu^2_s)^2 ds  \bigg]\\
 		& + c_{\text{\tiny TV}}\E^1\bigg[  \int_{kT}^{(k+1)T} d_{\text{\tiny TV}}(\mu^1_s,\mu^2_s) ds  \bigg]\\
 		\leq  &  (c_{\text{\tiny TV}}T+4 ( c_{\text{\tiny BDG}} c_{\text{\tiny TV}} T^\frac{1}{2}  +  \frac{1}{2}c_{\text{\tiny TV}}^2T )) W^*(\pi^1_{X,(k+1)T},\pi^2_{X,(k+1)T}).\\
 		\end{split}
 		\end{equation} 
 		Therefore $W^*(\pi^1_{X,(k+1)T},\pi^2_{X,(k+1)T})=0$. By induction, the proof is complete. 
 	\end{enumerate}
 \end{proof}

\section*{Acknowledgements}
We would like to express our gratitude to Sandy Davie from the University of Edinburgh and Daniel Lacker from Columbia University for discussions regarding this work and their helpful suggestions.

%\paragraph{}
%William Hammersley was supported by The Maxwell Institute Graduate School in Analysis and its Applications, a Centre for Doctoral Training funded by the UK Engineering and Physical Sciences Research Council (grant EP/L016508/01), the Scottish Funding Council, Heriot-Watt University and the University of Edinburgh.

%\input{McKean_Lyapunov-appendix-text-v1}
\appendix
\section*{Appendix}
\setcounter{section}{1}
\setcounter{theorem}{0} 
\setcounter{equation}{0}

The following lemma is standard and numerous lemmas of this type are proved in the note \cite{Taraldsen2018DoobDynkin}.
\begin{lemma}[Doob-Dynkin Lemma]\label{lem doobdynkin}Given measurable spaces $(\Omega,\cF)$, $(\cX,\cF_\cX)$ and $(\cY,\cF_\cY)$, with measurable functions $X:\Omega\mapsto \cX$ and $Y:\Omega\mapsto \cY$, if the image $X(\Omega)$ of function $X$ is contained in a standard Borel space, and $X$ is measurable with respect to the initial $\sigma$-algebra of $Y$ (the initial sigma algebra of $Y$ is defined as $\sigma( Y^{-1}(A):A\in \cF_\cY)$), then there exists a measurable $\phi:\cY\mapsto \cX$ such that $X=\phi(Y)$.
\end{lemma}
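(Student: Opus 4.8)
The plan is to reduce the target to a Borel subset of $[0,1]$ via the Lusin--Kuratowski embedding theorem, settle the real-valued case by lifting $\sigma(Y)$-measurable simple approximations of $X$ to functions on $\cY$ and passing to a pointwise limit, and then transport the resulting function back through the embedding.

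First I would dispose of trivialities: if $\cX=\emptyset$ then $\Omega=\emptyset$ and the empty map works, so assume $\cX\neq\emptyset$ and fix a reference point $x_0\in\cX$. By hypothesis $X(\Omega)$ lies in a standard Borel space, which we may take to be a measurable subset $S\in\cF_\cX$ carrying its trace $\sigma$-algebra $\cS:=\{A\cap S:A\in\cF_\cX\}$ (equivalently, one may just assume $\cX$ itself standard Borel, restricting the range of $X$). Since $(S,\cS)$ is standard Borel, there is a bimeasurable bijection $\iota$ from $S$ onto a Borel set $\iota(S)\in\cB([0,1])$. I would also record that the initial $\sigma$-algebra $\sigma(Y):=\sigma(Y^{-1}(A):A\in\cF_\cY)$ coincides with $\{Y^{-1}(A):A\in\cF_\cY\}$, this collection already being a $\sigma$-algebra. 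Set $\widetilde X:=\iota\circ X:\Omega\to[0,1]$, which is $\sigma(Y)$-measurable because $X$ is; it therefore suffices to find a measurable $\psi:\cY\to[0,1]$ with $\widetilde X=\psi\circ Y$, since then $\phi:=\iota^{-1}\circ\psi$ on $\psi^{-1}(\iota(S))$ and $\phi:=x_0$ elsewhere is $\cF_\cX$-measurable (as $\iota$ maps $\cS$ bijectively onto the Borel subsets of $\iota(S)$, themselves Borel in $[0,1]$) and satisfies $\phi(Y)=X$, because $\psi(Y(\omega))=\widetilde X(\omega)=\iota(X(\omega))\in\iota(S)$ for every $\omega$.

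For the real-valued case I would take the dyadic simple approximations $\widetilde X_n:=\big(2^{-n}\lfloor 2^n\widetilde X\rfloor\big)\wedge 1$, which are $\sigma(Y)$-measurable and converge to $\widetilde X$ pointwise on $\Omega$. Each $\widetilde X_n$ equals a finite sum $\sum_i c_i^n\,\mathbf{1}_{Y^{-1}(B_i^n)}$ with $B_i^n\in\cF_\cY$; set $\psi_n:=\sum_i c_i^n\,\mathbf{1}_{B_i^n}:\cY\to\mathbb R$, which is $\cF_\cY$-measurable and satisfies $\psi_n\circ Y=\widetilde X_n$. Let $C:=\{y\in\cY:(\psi_n(y))_n\ \text{converges in}\ \mathbb R\}\in\cF_\cY$, define $\psi:=\lim_n\psi_n$ on $C$ and $\psi:=0$ off $C$; then $\psi$ is $\cF_\cY$-measurable and $[0,1]$-valued on $C$. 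Since $\widetilde X_n(\omega)\to\widetilde X(\omega)$ for every $\omega$, we have $Y(\Omega)\subseteq C$ and $\psi\circ Y=\lim_n\widetilde X_n=\widetilde X$, completing the reduction and hence the proof.

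The genuinely delicate ingredients are the Borel embedding theorem --- needed so that $\iota(S)$ is Borel in $[0,1]$, which is exactly what makes $\psi^{-1}(\iota(S))$ measurable and lets $\phi$ be defined globally and measurably --- and the small but essential point that $\psi_n\circ Y=\widetilde X_n$ holds even though $Y$ need not be surjective; it is precisely this gap that precludes a direct ``cancellation of $Y$'' and forces the pointwise-limit construction (and explains why $\phi$ is not unique). Everything else is the standard simple-function-then-limit routine; an alternative is a functional monotone class argument on the bounded functions of the form $g\circ Y$, but for a standard Borel target the embedding into $[0,1]$ is the shortest path.
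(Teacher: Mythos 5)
The paper does not actually prove this lemma; it simply remarks that the statement is standard and points to the note of Taraldsen cited in the bibliography. Your argument is the standard proof of the Doob--Dynkin lemma and it is correct: you reduce to a $[0,1]$-valued target via the Borel isomorphism (Lusin--Kuratowski) theorem for standard Borel spaces, prove the scalar case by writing $\sigma(Y)$-measurable dyadic simple approximations as $\sum_i c_i^n \mathbf 1_{Y^{-1}(B_i^n)}$, lifting them to $\psi_n=\sum_i c_i^n\mathbf 1_{B_i^n}$ on $\cY$, and taking the pointwise limit on the measurable convergence set $C\supseteq Y(\Omega)$, and then transport back through the embedding. You correctly handle the two points where such proofs usually go wrong: the identity $\psi_n\circ Y=\widetilde X_n$ holds because $\mathbf 1_{B}(Y(\omega))=\mathbf 1_{Y^{-1}(B)}(\omega)$ irrespective of whether $Y$ is surjective or the $B_i^n$ are disjoint, and the measurability of the final $\phi$ rests on $\iota$ carrying the trace $\sigma$-algebra of $S$ bijectively onto the Borel subsets of the Borel set $\iota(S)\subseteq[0,1]$. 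The only (harmless) interpretive choice is reading the hypothesis as saying that $X(\Omega)$ is contained in a measurable subset $S$ of $\cX$ that is standard Borel under the trace $\sigma$-algebra; under that reading the proof is complete.
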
 

\subsection{Immersion and Compatibility}\label{sec immcomp}

The following theorem follows from \cite{lackerdensesets} where further equivalent conditions and references can be found. 
\begin{theorem}[Conditions equivalent to Immersion]\label{thm compat}
On a given probability space $(\Omega,\cF,\bP)$, consider two filtrations $\bF, \mathbb G$ such that $\bF\subset  \mathbb G$. Then $\mathbb F$ is immersed in $\mathbb G$ under $\P$ if and only if any of the following conditions holds:
\begin{enumerate}
	\item $\cG_t$ is conditionally independent of $\cF_{\infty}$ given $\cF_t$, for any t. 
	\item Every bounded $\bF$ martingale is a $\mathbb G$ martingale. 
	\item For every t and every integrable $\cF_\infty$ measurable $X$, $\E[X|\cF_t]=\E[X|\cG_t]$ $\P$-a.s. 
	\item For every t and every integrable $\cG_t$ measurable $X$, $\E[X|\cF_t]=\E[X|\cF_\infty]$ $\P$-a.s.
\end{enumerate}
\end{theorem}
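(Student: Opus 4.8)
The plan is to prove all five assertions equivalent by running a single cycle of implications,
$$\text{immersion}\ \Rightarrow\ (2)\ \Rightarrow\ (3)\ \Rightarrow\ (1)\ \Rightarrow\ (4)\ \Rightarrow\ \text{immersion}.$$
The first arrow is trivial: a uniformly bounded $\bF$-martingale is square-integrable, hence a $\mathbb{G}$-martingale by the definition of immersion.

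For (2)$\Rightarrow$(3), fix $t$ and a bounded $\cF_\infty$-measurable $X$ and consider the $\bF$-martingale $M_s:=\E[X\mid\cF_s]$. It is uniformly bounded, so by (2) it is a (uniformly integrable) $\mathbb{G}$-martingale; by L\'evy's theorem it converges a.s. and in $L^1$, and since from the $\bF$ side $M_s\to\E[X\mid\cF_\infty]=X$, it is closed in $\mathbb{G}$ by $X$, i.e. $M_s=\E[X\mid\cG_s]$ for all $s$. Evaluating at $s=t$ gives $\E[X\mid\cF_t]=\E[X\mid\cG_t]$, and the passage from bounded to arbitrary integrable $\cF_\infty$-measurable $X$ is the usual truncation and monotone-convergence argument.

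The remaining arrows are manipulations of conditional expectations. For (3)$\Rightarrow$(1), given bounded $Z$ that is $\cG_t$-measurable and bounded $Y$ that is $\cF_\infty$-measurable, the tower property over $\cG_t$ followed by (3) gives
$$\E[ZY\mid\cF_t]=\E\big[Z\,\E[Y\mid\cG_t]\,\big|\,\cF_t\big]=\E\big[Z\,\E[Y\mid\cF_t]\,\big|\,\cF_t\big]=\E[Z\mid\cF_t]\,\E[Y\mid\cF_t],$$
which is exactly conditional independence of $\cG_t$ and $\cF_\infty$ given $\cF_t$. For (1)$\Rightarrow$(4), fix $t$ and bounded $\cG_t$-measurable $X$; for every bounded $\cF_\infty$-measurable $Y$, condition (1) and the tower property yield $\E[XY]=\E[\E[X\mid\cF_t]\,\E[Y\mid\cF_t]]=\E[\E[X\mid\cF_t]\,Y]$, and since $\E[X\mid\cF_t]$ is $\cF_\infty$-measurable this identifies it with $\E[X\mid\cF_\infty]$; extend to integrable $X$ as before. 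For (4)$\Rightarrow$immersion, let $M$ be a square-integrable $\bF$-martingale and $s\le t$; then $M_s$ is $\cG_s$-measurable, and for any bounded $\cG_s$-measurable $Z$, using that $M_t$ is $\cF_\infty$-measurable, then (4) applied to $Z$, then the $\bF$-martingale and tower properties,
$$\E[M_t Z]=\E\big[M_t\,\E[Z\mid\cF_\infty]\big]=\E\big[M_t\,\E[Z\mid\cF_s]\big]=\E\big[M_s\,\E[Z\mid\cF_s]\big]=\E[M_s Z],$$
so $\E[M_t\mid\cG_s]=M_s$ and $M$ is a $\mathbb{G}$-martingale.

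I expect the only non-bookkeeping point to be the closure step inside (2)$\Rightarrow$(3): upgrading ``$M$ is a $\mathbb{G}$-martingale'' to ``$M_s=\E[X\mid\cG_s]$'' needs uniform integrability of $M$ in the larger filtration (here automatic from boundedness) together with L\'evy's convergence theorem, after which all the conditional-expectation identities, and the truncation arguments extending each statement from bounded to integrable test variables, are routine.
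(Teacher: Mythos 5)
Your proof is correct. Note that the paper does not actually prove this theorem: it is stated in the appendix with the remark that it ``follows from \cite{lackerdensesets}'', so you are supplying an argument the authors delegate to the literature. Your single cycle immersion $\Rightarrow (2)\Rightarrow(3)\Rightarrow(1)\Rightarrow(4)\Rightarrow$ immersion is the standard and economical way to organise the equivalences, and each arrow checks out: the conditional-expectation manipulations in $(3)\Rightarrow(1)$, $(1)\Rightarrow(4)$ and $(4)\Rightarrow$ immersion are exactly the right computations, and the truncation step extending from bounded to integrable test variables is routine as you say. The one place where I would phrase things slightly more carefully is the closure step in $(2)\Rightarrow(3)$: in continuous time, rather than appealing to almost sure convergence of $M$ (which strictly speaking asks for some path regularity), it is cleaner to take any sequence $u_n\uparrow\infty$, use L\'evy's upward theorem for $\bF$ to get $M_{u_n}\to X$ in $L^1$ (since $\sigma(\bigcup_n\cF_{u_n})=\cF_\infty$), and then pass to the limit in $M_s=\E[M_{u_n}\mid\cG_s]$ using the $L^1$-contractivity of conditional expectation; this gives $M_s=\E[X\mid\cG_s]$ with no regularity assumptions on the filtrations. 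With that cosmetic adjustment the argument is complete and self-contained.
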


\subsection{Kolmogorov Continuity and Tightness}
The following two theorems are taken from \cite{KallenbergBook2002} on pages 57 and 313 respectively, where they are proved in sufficient generality for the present purposes. The statements have been adjusted, but remain true.%35 and 261 in first ed.
\begin{theorem}[Kolmogorov Continuity]\label{thm kcont}
Let $X$ be a process on $I$ with values in a Polish space $(\cY,d_{\cY})$ and assume that for some constants $a,b,c>0$ and any $s,t\in I$ such that $|t-s|\leq 1$ 
$$\mathbb E [d_{\cY}(X_t-X_s)^a]\leq c |t-s|^{1+b}.$$
 Then, X has a continuous version and for any $\gamma\in (0,b/a)$ the latter is almost surely locally $\gamma$ H\"older continuous. 
\end{theorem}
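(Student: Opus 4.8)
The plan is to follow the classical Kolmogorov--Chentsov chaining argument. Since $I = \bR^+$ is covered by the intervals $[n, n+1]$, $n \in \bN$, and a countable union of null sets is null, it suffices to prove the statement on $[0,1]$ and then patch; the local Hölder property on $I$ follows. Fix $\gamma \in (0, b/a)$.

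First I would restrict attention to the dyadic rationals: set $D_n := \{k2^{-n} : 0 \le k \le 2^n\}$ and $D := \bigcup_n D_n$. For consecutive dyadic points $t = (k-1)2^{-n}$ and $t' = k2^{-n}$, Markov's inequality applied to $d_{\cY}(X_{t'},X_t)^a$ together with the hypothesis gives
\[
\bP\bigl(d_{\cY}(X_{t'}, X_t) \ge 2^{-\gamma n}\bigr) \le 2^{\gamma n a}\, \bE\bigl[d_{\cY}(X_{t'}, X_t)^a\bigr] \le c\, 2^{-n(1 + b - \gamma a)}.
\]
Summing over the $2^n$ adjacent pairs in $D_n$ and then over $n$, the series $\sum_n c\, 2^{-n(b - \gamma a)}$ converges since $b - \gamma a > 0$. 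By the Borel--Cantelli lemma there is an a.s. finite random index $N = N(\omega)$ with
\[
\max_{1 \le k \le 2^n} d_{\cY}\bigl(X_{k2^{-n}}, X_{(k-1)2^{-n}}\bigr) < 2^{-\gamma n} \qquad \text{for all } n \ge N.
\]

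The heart of the argument is the chaining step. Given $s, t \in D$ with $0 < t - s \le 2^{-N}$, choose $m \ge N$ with $2^{-m-1} < t - s \le 2^{-m}$ and express both $s$ and $t$ as finite dyadic ``staircases'': starting from a point of $D_m$ inside $[s,t]$, successive refinements write $s$ (respectively $t$) as that point plus a finite sum of increments of mesh $2^{-n}$ with $n > m$, each mesh level used at most once. Applying the bound of the previous paragraph to each increment and summing the geometric series yields
\[
d_{\cY}(X_s, X_t) \le 2 \sum_{n \ge m} 2^{-\gamma n} \le C_\gamma\, 2^{-\gamma m} \le C_\gamma'\, (t - s)^{\gamma},
\]
with $C_\gamma'$ depending only on $\gamma$. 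Hence $s \mapsto X_s$, restricted to $D \cap [0,1]$, is almost surely $\gamma$-Hölder continuous with a modulus that is uniform once $|t-s| \le 2^{-N(\omega)}$.

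Finally I would upgrade this to a continuous version on $[0,1]$: since $D$ is dense in $[0,1]$ and $(\cY, d_{\cY})$ is complete, the a.s. uniformly continuous map $D \ni s \mapsto X_s$ extends uniquely to a continuous map $\tilde X : [0,1] \to \cY$, which inherits the local $\gamma$-Hölder bound in the limit. To check $\tilde X$ is a modification of $X$, fix $t$ and take $t_j \in D$, $t_j \to t$; the hypothesis forces $\bE[d_{\cY}(X_{t_j}, X_t)^a] \to 0$, so $X_{t_j} \to X_t$ in probability, while $X_{t_j} \to \tilde X_t$ a.s. by construction, whence $\tilde X_t = X_t$ a.s. Re-running the construction on each $[n, n+1]$ and gluing gives a version on $I$ that is a.s. locally $\gamma$-Hölder for every $\gamma \in (0, b/a)$. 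The one point needing care is the bookkeeping in the chaining step --- each dyadic mesh level must be counted at most once, so that the geometric series (rather than a larger sum) controls $d_{\cY}(X_s, X_t)$.
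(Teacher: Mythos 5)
The paper does not prove this theorem at all---it is quoted from Kallenberg's book (the stated proof is a citation to pages 57 and 313 of \cite{KallenbergBook2002})---and your argument is precisely the standard Kolmogorov--Chentsov chaining proof that the cited source contains, correctly adapted to Polish-space-valued processes (completeness is used, as it must be, in the extension step from the dyadics). The only blemish is a harmless off-by-one in the chaining step: if $2^{-m-1}<t-s\le 2^{-m}$ the interval $[s,t]$ need not contain a point of $D_m$, but it always contains a point of $D_{m+1}$, so the staircases should be anchored there; the geometric series and the final bound are unaffected.
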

%From Kallenberg pg 261 with slight alteration to proof.
\begin{theorem}\label{thm kcontn}
Let $\{X^n\}$ be a family of continuous processes on $I$ with values in a Polish space $(\cY,d_{\cY})$. Assume that $\{X^n_0\}$ is tight and that for some constants $a,b,c>0$ and any $s,t\in I$ such that $|t-s|\leq 1$ and uniformly in $n\in \bN$,
$$\mathbb E [d_{\cY}(X^n_t-X^n_s)^a]\leq c |t-s|^{1+b}.$$
Then, $\{X^n\}$ is tight in $C(I,\cY)$  and for any $\gamma\in (0,b/a)$ the limiting processes are almost surely locally $\gamma$ H\"older continuous. 
\end{theorem}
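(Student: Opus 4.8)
The plan is to localise to each compact time interval, upgrade the single-time moment estimate of Theorem~\ref{thm kcont} to a modulus-of-continuity bound that is \emph{uniform in $n$}, deduce tightness on each $C([0,N],\cY)$ via Arzel\`a--Ascoli together with Prokhorov's theorem, and finally read off the H\"older regularity of the limit from the Portmanteau theorem. For the localisation: since $C(I,\cY)$ carries the topology of uniform convergence on compacts, a family of laws on it is tight if and only if its pushforwards under each restriction map $r_N\colon x\mapsto x|_{[0,N]}$ are tight on $C([0,N],\cY)$, $N\in\bN$; a diagonal argument then produces weak limit points on all of $C(I,\cY)$ and transports regularity statements verified on every $[0,N]$ back to $I$. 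So fix $N\in\bN$.

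\textbf{Step 1 (uniform modulus estimate).} Fix $\gamma\in(0,b/a)$ and, for $k\ge 0$, put
\[
\Delta^n_k:=\max_{0\le j< N2^k} d_{\cY}\bigl(X^n_{(j+1)2^{-k}},X^n_{j2^{-k}}\bigr),\qquad K_n:=\sup_{k\ge 0} 2^{k\gamma}\Delta^n_k .
\]
Applying the hypothesis to increments of length $2^{-k}\le 1$ and a union bound over the at most $N2^k$ dyadic increments gives
\[
\bE\bigl[(2^{k\gamma}\Delta^n_k)^a\bigr]\le N2^k\cdot 2^{k\gamma a}\cdot c\,2^{-k(1+b)}=cN\,2^{-k(1+b-\gamma a)},
\]
which is summable over $k$ because $1+b-\gamma a>1$; since $K_n^a\le\sum_k(2^{k\gamma}\Delta^n_k)^a$ this yields $\sup_n\bE[K_n^a]<\infty$. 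The standard telescoping over dyadic rationals then gives, on the full-measure set $\{K_n<\infty\}$, the bound $d_{\cY}(X^n_t,X^n_s)\le C_\gamma K_n|t-s|^\gamma$ for all $s,t\in[0,N]$ with $|t-s|\le 1$, so the $\gamma$-H\"older seminorm $\|X^n\|_{\gamma,N}:=\sup_{\{s\ne t\in[0,N],\,|t-s|\le 1\}} d_{\cY}(X^n_t,X^n_s)/|t-s|^\gamma$ satisfies $\|X^n\|_{\gamma,N}\le C_\gamma K_n$ and $\sup_n\bE[\|X^n\|_{\gamma,N}^a]<\infty$. Chaining over at most $N+1$ unit increments also yields $\sup_n\sup_{t\le N}\bE[d_{\cY}(X^n_t,X^n_0)^a]<\infty$.

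\textbf{Step 2 (tightness on $C([0,N],\cY)$).} Let $\varepsilon>0$. By Step~1 and Markov's inequality choose $M<\infty$ with $\inf_n\P(\|X^n\|_{\gamma,N}\le M)\ge 1-\varepsilon/2$; by tightness of $\{X^n_0\}$ together with the uniform bound $\sup_n\bE[d_{\cY}(X^n_t,X^n_0)^a]<\infty$, the family $\{X^n_t\}_n$ is tight in $\cY$ for each $t$, so one may pick compacts $\mathcal K_j\subset\cY$ ($j\in\bN$, indexing a countable dense set $\{t_j\}\subset[0,N]$) with $\inf_n\P(X^n_{t_j}\in\mathcal K_j)\ge 1-\varepsilon 2^{-j-1}$. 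On the intersection of all these events the path $X^n|_{[0,N]}$ lies in
\[
A:=\bigl\{f\in C([0,N],\cY):\ \|f\|_{\gamma,N}\le M,\ f(t_j)\in\mathcal K_j\ \forall j\bigr\},
\]
which is uniformly equicontinuous with $\{f(s):f\in A\}$ relatively compact for every $s$ (for $\cY=\mathbb R^d$ this is immediate; in general one uses the spatial tightness just noted), hence relatively compact in $C([0,N],\cY)$ by Arzel\`a--Ascoli (see problem 2.4.11 in \cite{karatzas2012brownian}). Since $\P(X^n|_{[0,N]}\notin\bar A)\le\varepsilon$ uniformly in $n$ and $\bar A$ is compact, $\{\mathscr L(X^n|_{[0,N]})\}_n$ is tight; combining with the localisation, $\{X^n\}$ is tight in $C(I,\cY)$ by Prokhorov's theorem (\cite{billingsley}).

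\textbf{Step 3 (regularity of limit points).} Let $X$ be a weak limit of a subsequence of $\{X^n\}$, and fix $N\in\bN$ and $\gamma\in(0,b/a)$. The set $\{f\in C([0,N],\cY):\|f\|_{\gamma,N}\le M\}$ is closed, since $\|\cdot\|_{\gamma,N}$ is lower semicontinuous for uniform convergence; the Portmanteau theorem applied along the converging subsequence then gives $\P(\|X\|_{\gamma,N}\le M)\ge\limsup_n\P(\|X^n\|_{\gamma,N}\le M)$, and letting $M\to\infty$ yields $\|X\|_{\gamma,N}<\infty$ almost surely. Taking a countable intersection over $N\in\bN$ and over a sequence $\gamma_m\uparrow b/a$ shows that every limit point has almost surely locally $\gamma$-H\"older sample paths for each $\gamma\in(0,b/a)$.

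\textbf{Main obstacle.} The essential point is Step~1: converting the pairwise moment bound into a modulus-of-continuity estimate whose $a$-th moment is controlled \emph{uniformly in $n$}, so that the threshold $M$ in Step~2 can be chosen independently of $n$. Once this quantitative chaining bound is in hand, the remaining pieces---localisation, the spatial tightness of $\{X^n_t\}$ propagated from $t=0$ via the increment moments, Arzel\`a--Ascoli, Prokhorov and Portmanteau---are routine.
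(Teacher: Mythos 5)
Your argument is the standard Kolmogorov--Chentsov chaining proof; the paper itself offers no proof of this statement (it is quoted from Kallenberg), so this is the natural route, and Steps~1 and~3 are sound modulo a harmless arithmetic slip: the bound on $\bE[(2^{k\gamma}\Delta^n_k)^a]$ should read $cN\,2^{-k(b-\gamma a)}$ rather than $cN\,2^{-k(1+b-\gamma a)}$, because the factor $N2^k$ from the union bound costs one power of $2^{k}$; summability over $k$ still holds since $\gamma<b/a$ gives $b-\gamma a>0$.

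The genuine gap is in Step~2, at the sentence claiming that tightness of $\{X^n_0\}$ together with $\sup_n\bE[d_{\cY}(X^n_t,X^n_0)^a]<\infty$ yields tightness of $\{X^n_t\}_n$ in $\cY$ for each fixed $t$. That inference is valid only when closed bounded subsets of $\cY$ are compact (e.g.\ $\cY=\bR^d$): in a general Polish space, the set of points at uniformly bounded distance from a compact set need not be relatively compact, and a uniform moment bound on $d_{\cY}(X^n_t,X^n_0)$ gives nothing more than such a bound. Concretely, take $\cY=\ell^2$ and the deterministic continuous paths $X^n_t:=t\,e_n$, with $(e_n)_{n\in\bN}$ the standard orthonormal basis: $\{X^n_0\}=\{0\}$ is tight and $d_{\cY}(X^n_t,X^n_s)=|t-s|$ satisfies the moment hypothesis with $a=2$, $b=c=1$, yet $\{X^n_1\}=\{e_n\}$ has no convergent subsequence, so $\{X^n\}$ cannot be tight in $C(I;\ell^2)$. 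This shows not merely that your justification fails but that the theorem as stated --- general Polish $\cY$ with tightness assumed only at $t=0$ --- is false; the pointwise relative compactness needed for Arzel\`a--Ascoli must be imposed as a hypothesis (tightness of $\{X^n_t\}_n$ for every $t$ in a dense subset of $I$, as in the underlying tightness criterion in $C(I;\cY)$) rather than derived. With that hypothesis added, the rest of your proof goes through unchanged. Note that in the paper's application the state space is $\cY=\cP_p(\cC)$, which does not have the Heine--Borel property, so this extra condition genuinely requires separate verification there (it can be extracted from the uniform moment estimates of Lemma~\ref{lem apriori}, but not from the two hypotheses displayed in the theorem).
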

%\wh{I need versions of the above two theorems where the estimate is only to hold with constant $C_T$ depending on some interval s.t $s,t\in [0,T]$, or simply restate the theorems asking that an inequality holds uniformly for times $s,t$ such that $|t-s|\leq 1$ (proof strategy is unaltered and such versions hold...}

\subsection{Lemmas \ref{thm stochfub} and \ref{lem cndp}}\label{sec proofs}

The authors expect that the following lemma has been proved elsewhere, but cannot yet find a reference. 
%The reader is referred to \cite{bremaudYor78} for general results of this type, but a simple lemma is stated here that is sufficient for the purposes of this text.%Still not sure if this is covered, it is written in an unaccessible way...
%\wh{I doubt this is original, but I cannot find a reference...}

\begin{lemma}[Fubini-type Theorem for Conditional Expectation and \Ito\,Integrals]\label{thm stochfub}
	Given a probability space $(\Omega,\cF  ,\P)$, three filtrations $\bF^j:=(\cF^j_t)_{t\in I}$ $j=1,2,3$ and three processes $B,H,W$ satisfying the following conditions:
	\begin{enumerate}[i)]
		\item $\bF^1\subseteq \bF^2\subseteq\bF^3$ i.e. $\forall t\in I$, $\cF^1_t\subseteq \cF^2_t\subseteq \cF^3_t$.
		\item $\bF^1$ is immersed in $\bF^2$ under $\P$.
		\item $H$ is a bounded $\bF^2$-predictable process.
		%\item $H$ is an $\bF^2$-predictable process such that $\int_0^t|H_s|^2+|\E[H_s|\cF^1_s]|^2ds<\infty$ $\P$-a.s. for any $t\in I$.
		\item $B$ and $W$ are $\bF^3$ Brownian Motions.
		\item $B$ is $\bF^1$ adapted.
		\item For any $s,t\in I$, $s\leq t$, $\sigma(W_r-W_s:s\leq r\leq t)\indep \bF^1_t\vee\bF^2_s$.
	\end{enumerate}
	Then the following hold $\P$-a.s. for all $t\in I$:
	\begin{equation}
	\label{eq stochfub1}
	\E \bigg [\int_0^tH_{s}\,dW_s\bigg |\cF^1_t\bigg ]=0,
	\end{equation}
	\begin{equation}
	\label{eq stochfub2}
	\E \bigg [\int_0^tH_s\,dB_s\bigg |\cF^1_t\bigg ]=\int_0^t\E  [H_s|\cF^1_s ]\,dB_s.
	\end{equation}
\end{lemma}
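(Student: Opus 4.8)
The plan is to establish both identities first for \emph{elementary} integrands $H_s=\xi\,\1_{(u,v]}(s)$ with $0\le u<v$ and $\xi$ bounded and $\cF^2_u$-measurable (the time-zero part of a predictable process contributes nothing to either stochastic integral and may be ignored), and then to pass to a general bounded $\bF^2$-predictable $H$ by the $L^2$-approximation underlying the construction of the It\^o integral. The passage to the limit uses that, on each $[0,T]$ and for each fixed $t\le T$, the $\bF^3$-It\^o isometries make $H\mapsto\int_0^tH\,dW$ and $H\mapsto\int_0^tH\,dB$ continuous from $L^2(dt\otimes d\P)$ into $L^2(\Omega)$ (here one uses that $W,B$ are $\bF^3$-Brownian motions, and that $B$, being additionally $\bF^1$-adapted, is an $\bF^1$-Brownian motion, so that $\int_0^t\E[H_s\mid\cF^1_s]\,dB_s$ is a genuine $\bF^1$-It\^o integral), while $\E[\,\cdot\mid\cF^1_t]$ and $H\mapsto\E[H_\cdot\mid\cF^1_\cdot]$ are $L^2$-contractions; truncating the approximants keeps them uniformly bounded, so nothing is lost.

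For \eqref{eq stochfub1} with $H=\xi\,\1_{(u,v]}$ I would compute $\E[\xi(W_{v\wedge t}-W_{u\wedge t})\mid\cF^1_t]$ by inserting $\cF^1_t\vee\cF^2_u$ through the tower property (legitimate since $\cF^1_t\subseteq\cF^1_t\vee\cF^2_u$). As $\xi$ is $\cF^2_u$-measurable it pulls out of the inner conditional expectation, and by hypothesis~vi) the increment $W_{v\wedge t}-W_{u\wedge t}\in\sigma(W_r-W_u:u\le r\le t)$ is independent of $\cF^1_t\vee\cF^2_u$, so its conditional expectation equals its mean $0$; hence the whole expression vanishes. (Immersion is not needed for this identity.) Linearity gives \eqref{eq stochfub1} for every elementary $H$ and every fixed $t$.

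For \eqref{eq stochfub2} the crucial preliminary observation is that immersion collapses the $\bF^1$-conditional projection of an elementary integrand back to an elementary one: if $H=\xi\,\1_{(u,v]}$ with $\xi$ bounded and $\cF^2_u$-measurable, then condition~4 of Theorem~\ref{thm compat} gives $\E[\xi\mid\cF^1_s]=\E[\xi\mid\cF^1_\infty]=\E[\xi\mid\cF^1_u]$ for every $s\ge u$, so $\E[H_s\mid\cF^1_s]=\E[\xi\mid\cF^1_u]\,\1_{(u,v]}(s)$ and the right-hand side of \eqref{eq stochfub2} is just $\E[\xi\mid\cF^1_u]\,(B_{v\wedge t}-B_{u\wedge t})$, which is $\cF^1_t$-measurable because $B$ is $\bF^1$-adapted. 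It then remains to check $\E[\xi(B_{v\wedge t}-B_{u\wedge t})\mid\cF^1_t]=\E[\xi\mid\cF^1_u]\,(B_{v\wedge t}-B_{u\wedge t})$ by a one-line case analysis on the position of $t$: for $t\le u$ both sides vanish; for $u<t\le v$ one pulls the $\cF^1_t$-measurable factor $B_t-B_u$ out and applies $\E[\xi\mid\cF^1_t]=\E[\xi\mid\cF^1_u]$; and for $t>v$ one first uses condition~4 of Theorem~\ref{thm compat} (noting that $\xi(B_v-B_u)$ is $\cF^2_v$-measurable, since $B$ is $\bF^1\subseteq\bF^2$-adapted) to replace conditioning on $\cF^1_t$ by conditioning on $\cF^1_v$, reducing to the previous case. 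Linearity gives \eqref{eq stochfub2} for every elementary $H$ and every fixed $t$.

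Finally I would run the approximation: choose elementary $H^n\to H$ in $L^2(dt\otimes d\P)$ on $[0,T]$ with $\sup_n\|H^n\|_\infty<\infty$; then $\int_0^tH^n\,dW\to\int_0^tH\,dW$ and $\int_0^tH^n\,dB\to\int_0^tH\,dB$ in $L^2(\Omega)$, hence so do their $\cF^1_t$-conditional expectations, and $\E[H^n_\cdot\mid\cF^1_\cdot]\to\E[H_\cdot\mid\cF^1_\cdot]$ in $L^2(dt\otimes d\P)$; this last limit, being an $L^2$-limit of $\bF^1$-predictable elementary processes, admits an $\bF^1$-predictable version coinciding $dt\otimes d\P$-a.e.\ with $\E[H_s\mid\cF^1_s]$, which is precisely what gives meaning to the right-hand side of \eqref{eq stochfub2}, so that $\int_0^t\E[H^n_s\mid\cF^1_s]\,dB_s\to\int_0^t\E[H_s\mid\cF^1_s]\,dB_s$ in $L^2(\Omega)$. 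Both identities therefore hold for each fixed $t$; to obtain them $\P$-a.s.\ \emph{for all} $t$ simultaneously, one observes that $t\mapsto\E[\int_0^tH\,dM\mid\cF^1_t]$ (with $M=W$ or $M=B$) admits a c\`adl\`ag modification, namely the $\bF^1$-optional projection of the continuous $\bF^3$-martingale $\int_0^\cdot H\,dM$, and that this modification vanishes (resp.\ equals the continuous process $\int_0^\cdot\E[H\mid\cF^1]\,dB$) on a countable dense set of times, hence everywhere. I expect the genuine difficulty to lie not in the elementary computations — whose only content is the clean separation between hypothesis~vi) for $W$ and the immersion and adaptedness hypotheses for $B$ — but in this last bundle of technical points: the $\bF^1$-predictability of $s\mapsto\E[H_s\mid\cF^1_s]$ and the simultaneous-in-$t$ upgrade.
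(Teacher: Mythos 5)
Your proof is correct and follows essentially the same route as the paper's: both establish the identities first for elementary $\bF^2$-predictable integrands, using the tower property through $\cF^1_t\vee\cF^2_u$ together with hypothesis vi) for the $W$-integral, and immersion (condition 4 of Theorem \ref{thm compat}) plus the $\bF^1$-adaptedness of $B$ for the $B$-integral, before passing to general bounded predictable $H$ by approximation. The only differences are technical and minor: the paper takes the limit in ucp with dominated convergence for stochastic integrals rather than via the It\^o isometry in $L^2$, and it is in fact less explicit than you are about the $\bF^1$-predictable version of $s\mapsto\E[H_s\mid\cF^1_s]$ and the simultaneous-in-$t$ upgrade.
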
	 
%\wh{I have suppressed the mention of co-domains of the brownian motions and process $H$... I should add this}

\begin{proof}[Proof of Lemma \ref{thm stochfub}]
	
	The proof will follow a monotone class argument. Firstly, equations \eqref{eq stochfub1} and \eqref{eq stochfub2} are shown to hold for the family of simple predictable processes. 
	
	Let $H^n$ be a simple predictable process defined by $$H^n_t:=Z^0\1_{\{0\}}(t)+\sum_{i=0}^{n-1}Z^i\1_{(t_i,t_{i+1}]}(t)$$ where $n\in \bN$, $0\leq t_0\leq \cdots\leq t_i\leq \cdots \leq t_n < \infty$ and $Z^i$ are bounded $\cF^2_{t_i}$ measurable random elements for all $i=0,..,n$.
	Then \eqref{eq stochfub1} is verified via the following:
	
	\begin{equation}\notag
	\begin{split}
	\E \bigg [\int_0^tH^n_{s}\,dW_s\bigg |\cF^1_t\bigg ]= & \sum_{i=0}^{n-1}\E [Z^i(W_{{t_{i+1}}\wedge t}-W_{t_i})|\cF^1_t]\\
	=& \sum_{i=0}^{n-1}\E[\E[ Z^i(W_{t_{i+1}\wedge t}-W_{t_i})|\cF^1_t\vee\cF^2_{t_i}]|\cF^1_t]\\
	=& \sum_{i=0}^{n-1}\E[\E[(W_{t_{i+1}\wedge t}-W_{t_i})|\cF^1_t\vee\cF^2_{t_i}]Z^i|\cF^1_t]\\
	= & 0. \\
	\end{split}
	\end{equation}
	The first equality follows from $H^n$ being a simple predictable process, the second and third from the tower and pull out properties of conditional expectation respectively, the fourth from condition $iv)$ and $vi)$.
	
	% Similarly to the above argument, for the demonstration of \eqref{eq stochfub2} for simple predictable processes, it is enough to prove the inequality for the process $Y^i$.
	To verify the second equation \eqref{eq stochfub2}, consider the following equalities:
	
	\begin{equation}
	\notag
	\begin{split}
	\E\bigg [
	\int_0^t H^n_{s}\,dB_s\bigg |\cF^1_t\bigg ]= & \E\bigg [ \sum_{i=0}^{n-1}Z^i(B_{t_{i+1}\wedge t}-B_{t_{i}\wedge t})\bigg |\cF^1_t\bigg ]\\
	= &  \sum_{i=0}^{n-1}\E[Z^i|\cF^1_t](B_{t_{i+1}\wedge t}-B_{t_{i}\wedge t})\\
	= & \sum_{i=0}^{n-1}\E[Z^i|\cF^1_{t_i}](B_{t_{i+1}\wedge t}-B_{t_{i}\wedge t})\\
	= & \int_0^t\E[H^n_s|\cF^1_s]\,dB_s.\\
	\end{split}
	\end{equation}
	The second equality can be seen to hold by considering separately the cases: $t<t_i$, $t_i\leq t\leq t_{i+1}$ and $t_{i+1}<t$. The third equality holds from the immersion of $\bF^1$ in $\bF^2$ and the fourth from the definition of $H^n$. 
	
	Now that the desired equalities have been established for simple predictable processes, it remains to show the equality holds for a predictable process $H$ satisfying $iii)$ with a sequence of simple predictable processes $H^n\rightarrow H$ in uniformly on compact sets in probability (in ucp) as $n\rightarrow \infty$. Note that the sequence $H^n$ can be chosen such that for any $n\in\bN$, $|H^n|<K$, where $K$ is the bound for $H$. Recall that convergence in ucp means that for any $t\in I$, $\sup_{0\leq s \leq t}|H^n_s-H_s|$ converges to $0$ in probability. Hence there exists a subsequence $n_k$ that elevates the convergence to almost sure convergence along this subsequence. Therefore, by application of the dominated convergence for stochastic integrals [Theorem 32 p.145 \cite{ProtterBook1990}](with another subsequence) and dominated convergence for conditional expectation, the lemma is proved.
	
	%\wh{extending to locally bounded looked tricky at first - I would like to revisit this.}
	%   \E [\int_0^tX_s\,dB_s|\cF^1_t]
\end{proof}

%Given two complete probability spaces $(\Omega^0,\mathcal{F}^0,\mathbb P^0)$ and  $(\Omega^1,\mathcal{F}^1,\mathbb P^1)$ endowed with complete, right continuous filtrations $\mathbb F^0:=\{\mathcal F^0_t\}_{t\in I}$ and $\mathbb F^1:=\{\mathcal F^1_t\}_{t\in I}$ and equipped with $\mathbb F^0$ and $\mathbb F^1$ Brownian motions $W^{0}$ and $W^1$, consider the probability space $(\Omega,\mathcal F,\mathbb P)$ where $\Omega:=\Omega^0\times \Omega^1$ and $(\mathcal{F},\mathbb P)$ is defined to be the completion of $\mathcal F^0 \otimes \mathcal F^1,\mathbb P^0\otimes\mathbb P^1$ 
%
%%Following the setting of \cite{carmona2017probabilistic}, the following lemma will be of use later. It allows one to get a $\cP_p$ valued version of the regular conditional distribution of $X$ on path-space. There is a further result in \cite{carmona2017probabilistic} that we can adapt to get a $\cP_p$ valued continuous process for the flow of marginal conditional distributions if we need, I haven't proved this yet, but am confident that the proof works in a similar way. \wh{We will need the extra result!!}
%\wh{I should replace this with a general version considering that we are not going the completion route.}

\begin{lemma}\label{lem cndp}
	Given a probability space $(\Omega,\cF,\bP)$ supporting a continuous $\mathbb R^{d_X}$ valued stochastic process $X$ on the interval $I$. Suppose that for any $T<\infty $, $\mathbb E[\sup_{t\in I:t\leq T}|X_t|^p]<\infty$. Then for a filtration $\bF=(\cF_t)_{t\in I}$ there is a $\cP_p(\cC)$ valued $\bF$ adapted stochastic process $\mu$ such that for all $t\in I$, $\mu_t=\mathscr L(X_{\cdot\wedge t}|\cF_t)_{t\in I}$ i.e. $\mu_t$ is a regular conditional distribution of $X_{\cdot\wedge t}$ given $\cF_t$.
	%\wh{lemma needs converting to new form of $\mu$}
	%\wh{ If further, one has that $\bF$ is immersed in $\mathbb G:=\bF\vee \bF^{X}$ and Insert Kolmogorov continuity conditions, then $\mu$ has a continuous modification should do this in the main body???} 
\end{lemma}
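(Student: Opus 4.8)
The plan is to build $\mu$ one test function at a time and then reassemble the pieces into a single measure-valued process. Fix a countable family $\{f_k\}_{k\in\bN}\subset C_b(\cC)$ which is convergence determining for $\cP(\cC)$ and rich enough (a Daniell--Stone type lattice/algebra, using that $\cC$ is Polish) that the map $m\mapsto(\langle m,f_k\rangle)_{k}$ is injective on $\cP(\cC)$, generates the Borel $\sigma$-algebra of $\cP(\cC)$, and has range cut out by a countable list of Borel conditions on $\mathbb{R}^{\bN}$. For each $k$ the process $Y^k_t:=f_k(X_{\cdot\wedge t})$ is bounded and has continuous paths, since $t\mapsto X_{\cdot\wedge t}$ is continuous from $I$ into $\cC$ (as $X$ has continuous paths) and $f_k$ is continuous; however $Y^k$ is only $\cF_\infty$-measurable, not $\bF$-adapted. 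Assuming, as in every application in this paper, that $\bF$ satisfies the usual conditions (otherwise one augments, at the cost of replacing $\cF_t$ by $\cF_{t+}$), the optional projection theorem applied to the bounded measurable process $Y^k$ produces an optional --- in particular progressively measurable, hence jointly $\cB(I)\otimes\cF$-measurable and $\bF$-adapted --- real process $m^k$ such that for every deterministic $t\in I$ one has $m^k_t=\E[f_k(X_{\cdot\wedge t})\mid\cF_t]$ $\P$-a.s.\ (the stopping-time characterisation of the optional projection covers all the constant times $T\equiv t$ at once). Under right-continuity one could alternatively obtain such $m^k$ as the limit of $\E[f_k(X_{\cdot\wedge t})\mid\cF_{d_n(t)}]$ along dyadic $d_n(t)\downarrow t$ via reverse martingale convergence.

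Now I assemble. Let $A\subset I\times\Omega$ be the progressively measurable set of pairs $(t,\omega)$ for which $(m^k_t(\omega))_k$ is the sequence of $\{f_k\}$-integrals of some (then unique, by injectivity) $\nu_t(\omega)\in\cP(\cC)$; the reconstruction map $(m^k_t(\omega))_k\mapsto\nu_t(\omega)$ is Borel (Lusin--Souslin), so $(t,\omega)\mapsto\nu_t(\omega)$ is again progressively measurable. Set $\mu_t(\omega):=\nu_t(\omega)$ on $A$ and $\mu_t(\omega):=\delta_{0}$ off $A$ (Dirac at the constant $0$ path). Then $\mu$ is progressively measurable, hence $\bF$-adapted, and $\cP(\cC)$-valued. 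For each fixed $t$, intersecting over $k$ the countably many a.s.\ identities $m^k_t=\E[f_k(X_{\cdot\wedge t})\mid\cF_t]=\langle\mathscr L(X_{\cdot\wedge t}\mid\cF_t),f_k\rangle$ shows that $\P$-a.s.\ the section $(m^k_t)_k$ reproduces a fixed regular conditional law $\mathscr L(X_{\cdot\wedge t}\mid\cF_t)$ (which exists because $\cC$ is a Borel space); hence $\P(\{\omega:(t,\omega)\in A\})=1$ and $\mu_t$ is a version of $\mathscr L(X_{\cdot\wedge t}\mid\cF_t)$, which is the identity required by the lemma apart from the integrability claim.

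It remains to arrange that $\mu$ be genuinely $\cP_p(\cC)$-valued. Writing $d$ for the metric on $\cC$ --- a weighted sum over $T\in\bN$ of $\sup_{s\le T}|x_s-y_s|\wedge 1$ with weights summing to one --- Jensen's inequality applied to those weights together with the hypothesis $\E[\sup_{s\le T}|X_s|^p]<\infty$ for all $T$ gives $\E[d(X,0)^p]<\infty$, and since $d(X_{\cdot\wedge t},0)\le d(X,0)$ we get $\langle\mu_t,d(\cdot,0)^p\rangle=\E[d(X_{\cdot\wedge t},0)^p\mid\cF_t]<\infty$ $\P$-a.s.\ on the full-measure section above, so $\mu_t\in\cP_p(\cC)$ a.s.; intersecting $A$ with the progressively measurable set $\{(t,\omega):\langle\mu_t(\omega),d(\cdot,0)^p\rangle<\infty\}$ and redefining $\mu_t:=\delta_0\in\cP_p(\cC)$ off it preserves adaptedness, joint measurability and the conditional-law property while making $\mu$ $\cP_p(\cC)$-valued everywhere (measurability of $\mu$ into $(\cP_p(\cC),W_p)$ follows from its measurability tested against bounded continuous functions and against $d(\cdot,0)^p$). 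The one genuinely delicate point, and the step I would take most care over, is precisely the passage from ``a version of the conditional law for each $t$'' to ``a single adapted, jointly measurable process realising all these versions simultaneously for all $t$''; this is exactly what the optional projection supplies. The reconstruction of a measure from a countable determining family and the $\cP_p$ refinement via the moment hypothesis are routine.
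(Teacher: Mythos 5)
Your construction is correct in outline but does far more work than the lemma requires, and it takes a genuinely different route from the paper. The lemma only asks for an \emph{adapted} process, i.e.\ that $\mu_t$ be $\cF_t$-measurable for each fixed $t$; it does not ask for joint or progressive measurability in $(t,\omega)$. The paper therefore simply takes, for each $t$ separately, a regular conditional distribution $\kappa_{X_{\cdot\wedge t},\cF_t}$ (which is $\cF_t$-measurable by construction, so the resulting process is automatically adapted), shows that the event $D_t:=\{\kappa_{X_{\cdot\wedge t},\cF_t}\notin\cP_p(\cC)\}$ lies in $\cF_t$ and must be $\P$-null --- otherwise $\E[\sup_{s\le t}|X_s|^p]=\E[\E[\sup_{s\le t}|X_s|^p\mid\cF_t](\1_{D_t}+\1_{D_t^c})]=\infty$, contradicting the moment hypothesis --- and redefines $\mu_t$ to be a fixed element of $\cP_p(\cC)$ on $D_t$. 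No optional projection, no countable determining family, no usual conditions. Your route buys progressive measurability, a strictly stronger conclusion, but at the cost of heavier machinery and an extra hypothesis: the lemma is invoked in the main text for raw filtrations such as $\bF^B$, and the parenthetical ``otherwise one augments'' does not fully rescue adaptedness, since after augmentation $m^k_t$ is a version of $\E[f_k(X_{\cdot\wedge t})\mid\cF_{t+}\vee\cN]$, which need not be $\cF_t$-measurable nor a version of the conditional expectation given the original $\cF_t$.

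The one step that genuinely fails as written is the $\cP_p$ refinement. The metric $d$ you chose (a weighted sum of truncated suprema) is bounded by $1$, so $\E[d(X,0)^p]<\infty$ holds for \emph{every} process, your moment hypothesis is never actually used, and the resulting ``$\cP_p(\cC)$'' is all of $\cP(\cC)$ --- which would make the lemma, and the $W_p$ estimates of the type appearing in the proof of Theorem \ref{thm wkex}, contentless. The space $\cP_p(\cC)$ the paper needs is taken with respect to the \emph{unbounded} functional $x\mapsto\sup_{s}|x_s|$ (finite on stopped continuous paths); the correct assertion is $\langle\mu_t,\sup_s|x_s|^p\rangle=\E[\sup_{s\le t}|X_s|^p\mid\cF_t]<\infty$ $\P$-a.s., which is precisely the contradiction argument the paper runs. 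The fix in your framework is immediate --- test against $\sup_{s\le T}|x_s|^p$ for each $T\in\bN$ instead of $d(\cdot,0)^p$ --- but as stated the step proves membership in the wrong space.
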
 

\begin{proof}[Proof of Lemma \ref{lem cndp}]
	For each $t\in I$, use the existence theorem for regular conditional distributions to get hold of a stochastic kernel $\kappa_{X_{\cdot\wedge t},\cF_t}$, a $(\Omega,\cF_t)\rightarrow (\cP(\cC),\cB (\cP( \cC)))$ measurable function.
	
	Let $D_t:=\{\omega:\kappa_{X_{\cdot\wedge t},\cF_t}\notin \cP_p(\cC)\}$. To see that $D_t$ is in $ {\cF}_t$ first note that for some fixed $\eta \in \cP_p(\cC)$, the sets defined $A_\varepsilon^{\eta}:=\{\nu\in\cP_p(\cC):W_p(\nu,\eta)<\varepsilon \}$ for any $\varepsilon>0 $, are in $\mathcal B(\cP(\cC))$.
	Note that $\cP_p(\cC)=\cup_{\varepsilon>0} A_\varepsilon^\eta$  and so $\cP_p(\cC)\in \mathcal B(\cP(\cC))$. This means that $D_t^c=\{ \omega :\kappa_{X_{\cdot\wedge t},\cF_t}\in \cP_p(\cC)\}\in{\mathcal F}_t$ by the aforementioned measurability of $\kappa_{X_{\cdot\wedge t},\cF_t}$ and therefore $D_t$ is also in $\cF_t$. 
	
	Now assume for the sake of contradiction that $D_t$ has non-zero probability under $\P$. Then,
	\[
	\begin{split}
	\E[\sup_{0\leq s\leq t}|X_s|^p]=\E[\sup_{0\leq s\leq t}|X_s|^p(\1_{D_t}+\1_{{D_t}^c})]= &  \E[\E[\sup_{0\leq s\leq t}|X_s|^p|{\mathcal F}_t](\1_{D_t}+\1_{{D_t}^c})]\\
	=& \infty,
	\end{split}
	\]
	which is a contradiction.

	Finally, for some arbitrary but fixed distribution $\mu\in \cP_p(\cC)$ defining for all $t\in I$, $\mathscr L(X_t| \cF_t):=\kappa_{X_t,\cF_t}\1_{D^c_t}+\mu\1_{D_t}$ see that $\mathscr L(X_{\cdot\wedge t}| \cF_t)$ is an $\cF_t$-measurable $\cP_p(\cC)$ valued version of the regular conditional distribution of $X_{\cdot\wedge t}$ given ${\mathcal F}_t$ for each $t\in I$.
	%
	%Furthermore, $\mathscr L(X_t|\tilde \cF^0)(\omega^0,\omega^1)\in\cP_p(E)$ for all $(\omega^0,\omega^1)\in\Omega$ and so $\mathscr L(X|\tilde \cF^0)$ is a measurable mapping from $(\Omega,\tilde{\mathcal F}^0)$ into $(\cP_p(E),\mathcal B(\cP_p(E))) $. %PROVE
	%To see that $\mathscr L(\mathscr L(X|\tilde \cF^0))\in \cP_p\cP_p(E)$ we have that for fixed $\eta=\delta_0\in\cP_p(E)$, 
	%\[
	%\begin{split}
	%\left( \inf_{\pi(\mathscr L(X|\tilde \cF^0),\delta_0)}\int_{\cP_p(E)}W_p(\mu,\nu)^p\pi(d\mu,d\nu)\right)^{\frac{1}{p}}\leq \E[W_p(\mathscr L(X|\tilde \cF^0),\delta_0)^p]^{\frac{1}{p}}=\E[\E[|X|^p|\tilde{\mathcal F}^0]]^{\frac{1}{p}}=|X|_{L_p}<\infty.\\
	%\end{split}
	%\]
	%
	%	\begin{itemize}
	%\item Get path valued random element that is a modification of $X$
	%\item condition on the terminal sigma algebra $\cF^\infty$. 
	%\item show that there is a p integrable version. 
	%\item prove a.s. continuity with wasserstein estimate
	%\item modify to get continuous version.
	%\end{itemize}

\end{proof}

%%%%%%%%%%%%%%%%%%%%%%%%%%%%%%%%%
%%%% \BEGIN BIBLIOGRAPHY
%%%%%%%%%%%%%%%%%%%%%%%%%%%%%%%%%%%%%%%%%%%%%%%%%%%%%%%%%%%%%%%%%%%%%%%%%%
%\bibliographystyle{abbrv}%wmaainf}%alpha}%plain} %abbrv
%\printbibliography 
%\bibliographystyle{abbrv}
%\bibliography{sigproc}
%\printbibliography[heading=bibintoc]

\bibliography{FullBibliographyWRPH}  % ``name``.bib is the name of thedatabase
% \bib, bibdiv, biblist are defined by the amsrefs package.
%%%%%%%%%%%%%%%%%%%%%%%%%%%%%%%%%%%%%%%%%%%%%%%%%%%%%%%%%%%%%%%%%%%%%%%%%%
%
%\begin{thebibliography}{9}
%	
%	\bibitem{r1}
%	\textsc{Billingsley, P.} (1999). \textit{Convergence of
%		Probability Measures}, 2nd ed.
%	Wiley, New York.
%	\MR{1700749}
%	
% 
%	
%\end{thebibliography}

%%%% \END BIBLIOGRAPHY
%%%%%%%%%%%%%%%%%%%%%%%%%%%%%%%%%%%%%%%%%%%%%%%%%%%%%%%%%%%%%%%%%%%%%%%%%%
\end{document}